\newtheorem{thm}{Theorem}[]
\newtheorem{lem}{Lemma}[section]
\newtheorem{prop}{Proposition}[section]
\newtheorem{rem}{Remark}[section]
\newtheorem{ex}{Example}[section]
\newtheorem{defn}{Definition}[section]
\numberwithin{equation}{section}
\DeclarePairedDelimiter{\abs}{\lvert}{\rvert}
\DeclarePairedDelimiter{\norm}{\lVert}{\rVert}
\NewDocumentCommand{\norml}{ s O{} m m }{%
	\IfBooleanTF{#1}{\norm*{#4}}{\norm[#2]{#4}}_{L^{#3}}%
}
\NewDocumentCommand{\normL}{ s O{} m m }{%
	\IfBooleanTF{#1}{\norm*{#4}}{\norm[#2]{#4}}_{\LP^{#3}}%
}
\NewDocumentCommand{\normw}{ s O{} m m m }{%
	\IfBooleanTF{#1}{\norm*{#4}}{\norm[#2]{#5}}_{W^{#3,#4}}%
}
\NewDocumentCommand{\normW}{ s O{} m m m }{%
	\IfBooleanTF{#1}{\norm*{#4}}{\norm[#2]{#5}}_{\W^{#3,#4}}%
}
\NewDocumentCommand{\normh}{ s O{} m m }{
	\IfBooleanTF{#1}{\norm*{#4}}{\norm[#2]{#4}}_{H^{#3}}
}
\NewDocumentCommand{\normH}{ s O{} m m }{%
	\IfBooleanTF{#1}{\norm*{#4}}{\norm[#2]{#4}}_{\H^{#3}}%
}
\def\Td{\mathcal{T}_\delta}
\def\bPhi{\boldsymbol{\Phi}}
\def\R{\mathbb R}
\def\N{\mathbb N}
\def\n{\mathbf n}
\def\a{\mathbf a}
\def\f{\mathbf{f}}
\def\DD{\mathcal D}
\def\z{\mathbf{z}}
\def\k{\mathbf{k}}
\def\LP{\mathbf L}
\def\H{\mathbf H}
\def\bh{\mathbf h}
\def\u{\mathbf{u}}
\def\v{\mathbf{v}}
\def\w{\mathbf{w}}
\def\y{\mathbf{y}}
\def\T{\mathbf{T}}
\def\I{\mathbf{I}}
\def\D{{\mathbf{D}}}
\def\W{{\mathbf W}}
\def\F{{\mathbf{F}}}
\def\q{{\mathbf{q}}}
\def\KK{\mathcal{K}}
\def\bpsi{\boldsymbol{\psi}}
\def\OmegaT{{\Omega_T}}
\def\intO{\int_\Omega}
\def\intS{\int_\Sigma}
\def\intT{\int_{0}^{T}}
\def\intOT{\int_\OmegaT}
\def\Do{\mathrm{do}}
\def\eps{\varepsilon}
\def\dx{\;\mathrm dx}
\def\dt{\;\mathrm dt}
\def\dH{\; \mathrm d \mathcal{H}}
\def\del{\partial}
\def\delt{\partial_{t}}
\def\deln{\partial_\n}
\def\d{{\mathrm{d}}}
\def\grad{\nabla}
\def\lap{\Delta}
\def\div{\mathrm{div}}
\def\0{\mathbf 0}
\newcommand{\inn}[2]{\left \langle #1, #2 \right \rangle}
\begin{document}

\title{Weak and stationary solutions to a Cahn--Hilliard--Brinkman model with singular potentials and source terms}
\author{Matthias Ebenbeck \footnotemark[1] \and Kei Fong Lam \footnotemark[2]}
\date{}

\renewcommand{\thefootnote}{\fnsymbol{footnote}}
\footnotetext[1]{Fakult\"at f\"ur Mathematik, Universit\"at Regensburg, 93040 Regensburg, Germany ({\tt matthias.ebenbeck@mathematik.uni-regensburg.de}).}
\footnotetext[2]{Department of Mathematics, The Chinese University of Hong Kong, Shatin, N.T., Hong Kong
({\tt kflam@math.cuhk.edu.hk}).}

\maketitle

\begin{abstract}
We study a phase field model proposed recently in the context of tumour growth.  The model couples a Cahn--Hilliard--Brinkman (CHB) system with a elliptic reaction-diffusion equation for a nutrient.  The fluid velocity, governed by the Brinkman law, is not solenodial, as its divergence is a function of the nutrient and the phase field variable, i.e., solution-dependent, and frictionless boundary conditions are prescribed for the velocity to avoid imposing unrealistic constraints on the divergence relation.  In this paper we give a first result on the  existence of weak and stationary solutions to the CHB model with singular potentials, specifically the double obstacle potential and the logarithmic potential, which ensures the phase field variable stay in the physically relevant interval.  New difficulties arise from the interplay between the singular potentials and the solution-dependent source terms, but can be overcome with several key estimates for the approximations of the singular potentials, which may be of independent interests.  As a consequence, included in our analysis is a weak existence result for a Darcy variant, and our work serve to generalise recent results on weak and stationary solutions to the Cahn--Hilliard inpainting model with singular potentials.
\end{abstract}

\noindent {\bf Key words.} Tumour growth, Brinkman's law, Darcy's law, singular potentials, stationary solutions, Cahn--Hilliard inpainting \\

\noindent {\bf AMS subject classification.} 35K35, 35D30, 35J61, 35Q92, 92C50, 76D07

\section{Introduction}
Phase field models \cite{CL,Oden} have recently emerged as a new mathematical tool for tumour growth, which offer new advantages over classical models \cite{Byrne,Fried} based on a free boundary description, such as the ability to capture metastasis and other morphological instabilities like fingering in a natural way.  In conjunction with clinical data, statistical methodologies and image analysis, they have begun to impact the development of personalised cancer treatments \cite{Agosti1,Agosti2,HPVO,KLLU,Lima,Lorenzo}.   

To support this continuing effort, in this paper we expand on the recent analysis performed in \cite{EG_jde,EG2} for a class of phase field tumour models based on the Cahn--Hilliard--Brinkman (CHB) system.  For a bounded domain $\Omega \subset \R^d$ for $d \in \{2,3\}$ with boundary $\Sigma := \del \Omega$ and outer unit normal $\n$, and an arbitrary but fixed terminal time $T$, we consider for $\Omega_T := \Omega \times (0,T)$ the following system of equations  
\begin{subequations}\label{MEQ}
\begin{alignat}{3}
\label{MEQ_1} \div(\v)&=\Gamma_{\v}(\varphi,\sigma) := b_\v(\varphi) \sigma + f_\v(\varphi) && \quad \text{ in }   \Omega_T,\\
\label{MEQ_2}-\div(\T(\v,p)) +\nu\v&= (\mu + \chi\sigma)\grad\varphi  && \quad \text{ in } \Omega_T,\\
\label{MEQ_3}\delt \varphi + \div(\varphi\v) - \lap \mu &= \Gamma_{\varphi}(\varphi,\sigma) := b_\varphi(\varphi) \sigma + f_\varphi(\varphi)&& \quad \text{ in } \Omega_T,\\
\label{MEQ_4}\mu&= \psi'(\varphi)-\lap \varphi -\chi\sigma && \quad \text{ in } \Omega_T,\\
\label{MEQ_5}0 &= \lap \sigma - h(\varphi)\sigma&& \quad \text{ in } \Omega_T,
\end{alignat}
\end{subequations}
where the viscous stress tensor $\T$ and the symmetrised velocity gradient $\D\v$ are given by
\begin{align*}
\T(\v,p) := 2\eta(\varphi) \D\v+\lambda(\varphi) \div(\v) \I - p \I, \quad \D\v :=  \frac{1}{2}(\grad\v+\grad\v^\top).
\end{align*}
The model \eqref{MEQ} is a description of the evolution of a two-phase cell mixture, containing tumour cells and healthy host cells, surrounded by a chemical species acting as nutrients only for the tumour cells, and is transported by a fluid velocity field.  The variable $\sigma$ denotes the concentration of the nutrient, which we assume evolves quasistatically, while $\varphi$ denotes the difference in the volume fractions of the cells, with the region $\{\varphi = 1 \}$ representing the tumour cells and $\{\varphi = -1\}$ representing the host cells.  The fluid velocity $\v$ is taken as the volume averaged velocity, with pressure $p$, and $\mu$ denotes the chemical potential associated to $\varphi$.

Equations \eqref{MEQ_3}-\eqref{MEQ_4} comprise a convective Cahn--Hilliard system for $(\varphi, \mu)$ with source term $\Gamma_{\varphi}(\varphi, \sigma)$, that couples the nutrient equation \eqref{MEQ_5} also through the consumption term $h(\varphi) \sigma$.  The constant parameter $\chi$ appearing in \eqref{MEQ_4} captures the chemotaxis effect, see \cite{GLSS}. On the other hand, equations \eqref{MEQ_1}-\eqref{MEQ_2} form a Brinkman system for $(\v, p)$ with the term $(\mu + \chi \sigma) \nabla \varphi$ modelling capillary forces, and in the definition of the stress tensor, the functions $\eta$ and $\lambda$ represent the shear and bulk viscosities, respectively.  An interesting contrast with previous phase field models in two-phase flows that employ a volume averaged velocity, such as \cite{AGG,Boyer,Ding}, is that the velocity in \eqref{MEQ} is not solenoidal.  This can be attributed to the fact that in the case of unmatched densities, the gain $\sigma b_\varphi(\varphi)$ and loss $f_\varphi(\varphi)$ of cellular volume leads to sources $\sigma b_\v(\varphi)$ and sinks $f_\v(\varphi)$ in the mass balance, see for example \cite{GLSS,LW} for more details in the model derivation.

As a consequence of \eqref{MEQ_1}, we find the relation
\begin{align*}
\intO \Gamma_{\v}(\varphi, \sigma) \dx = \intO \div(\v) \dx = \intS \v \cdot \n \dH
\end{align*}
and the typical no-penetration boundary condition $\v \cdot \n = 0$ on $\Sigma \times (0,T)$ will lead to a mean-zero compatibility condition for $\Gamma_{\v}(\varphi, \sigma)$, which may not be satisfied in general.  For models with Darcy's law instead of \eqref{MEQ_2}, there are some works in the literature \cite{FLRS,GLIndam} that prescribe alternate boundary conditions, such as zero transport flux $\nabla \mu \cdot \n = \varphi \v \cdot \n$ and zero pressure $p = 0$ on $\Sigma_T := \Sigma \times (0,T)$ to circumvent the compatibility condition on $\Gamma_{\v}$.  For the Brinkman model, the compatibility issue can be avoided by prescrbing the frictionless boundary condition $\T \n = \0$ on $\Sigma_T$, as considered in \cite{EG_jde,EG2,EK,EK2}.

Hence, we furnish \eqref{MEQ} with the following initial and boundary conditions
\begin{subequations}\label{BIC}
\begin{alignat}{3}
\label{BC_1}\deln\mu=\deln\varphi &= 0 &&\quad \text{ on }\Sigma_T ,\\
\label{BC_2}\deln\sigma &= K(1-\sigma)&&\quad\text{ on }\Sigma_T
,\\
\label{BC_3}\T(\v,p)\n  &= \mathbf{0}&&\quad\text{ on }\Sigma_T,\\
\label{IC}\varphi(0) &= \varphi_0 &&\quad\text{ in }\Omega,
\end{alignat} 
\end{subequations}
where $\deln f = \nabla f \cdot \n$ is the normal derivative on $\Sigma$, $\varphi_0$ is a prescribed initial condition and $K$ is a positive permeability constant.

The resulting system \eqref{MEQ}-\eqref{BIC} has been studied previously by the first author in a series of works \cite{EG_jde,EG2,EK,EK2} concerning well-posedness, asymptotic limits and optimal control, in the setting where the double well potential $\psi$, whose first derivative appears in \eqref{MEQ_4}, is continuously differentiable.  The common example is the quartic potential $\psi(s) = (s^2-1)^2$.  The purpose of the present paper is to provide a first result for the weak existence to the CHB model when $\psi$ is a singular potential, where either the classical derivative of $\psi$ does not exist, or the derivative $\psi'$ blows up outside a certain interval.  For the former, the prime example is the double obstacle potential \cite{BE,BE_t}
\begin{align*}
\psi_{\Do}(r)=\frac{1}{2}(1-r^2)+ \mathbb{I}_{[-1,1]}(r) = \begin{cases} \frac{1}{2}(1-r^2) & \text{ for } r \in [-1,1], \\
+\infty & \text{ otherwise},
\end{cases}
\end{align*}
and for the latter the logarithmic potential
\begin{align*}
\psi_{\log}(r) = \frac{\theta}{2}\big((1+r)\log(1+r)+ (1-r)\log(1-r)\big)+\frac{\theta_c}{2}(1-r^2)\quad \text{ for } r\in(-1,1)
\end{align*}
with positive constants $0 < \theta < \theta_c$ is the standard example.

Let us briefly motivate the need to consider singular potentials such as $\psi_{\Do}$ and $\psi_{\log}$.  The variable $\varphi$ has a physical interpretation as the difference between the volume fractions of the tumour cells and the host cells.  As volume fractions are non-negative and bounded above by 1, $\varphi$ should lie in the physical range $[-1,1]$.  This natural boundedness of $\varphi$ becomes particularly important for physical models in fluid flow and biological sciences, where the mass density of the mixture expressed as an affine linear function of $\varphi$ may become negative if $\varphi$ strays out of $[-1,1]$.  A counterexample in \cite{CMZ} shows that for polynomial $\psi$, the phase field variable $\varphi$ can take values outside $[-1,1]$, and hence a remedy to enforce the natural bounds for $\varphi$ is to introduce nonsmooth potentials into the model.

For the original Cahn--Hilliard equation and its variants in solenoidal two-phase flow with the boundary condition \eqref{BC_1}, there are numerous contributions in the literature involving singular potentials, of which we cite \cite{ADG,AW,BE,Conti,GalGW,GGW,GMT} and refer to the references cited therein.  The most challenging aspect with singular potentials in the global existence of weak solutions is to show that the chemical potential $\mu$ is controlled in the Bochner space $L^2(0,T;H^1(\Omega))$.  This is equivalent to controlling the mean value $\mu_\Omega := \frac{1}{|\Omega|} \intO \mu \dx$ in $L^2(0,T)$, since the gradient $\nabla \mu$ is controlled from basic energy identities.  For polynomial $\psi$ this can be done via the relation $\intO \mu \dx = \intO \psi'(u) \dx$ with suitable growth conditions on $\psi'$, but this argument fails for the singular case.  The techniques used in the aforementioned references depend on first establishing the assertion $\varphi_\Omega(t) \in (-1,1)$ for all $t > 0$, which holds automatically for suitably chosen initial data thanks to the property of mass conservation $\varphi_\Omega(t) = \varphi_\Omega(0)$ for all $t > 0$.

When the Cahn--Hilliard component is affixed with a source term, like \eqref{MEQ_3}, then in general the mass conservation property is lost.  Therefore, simply using previous techniques would only give a local-in-time weak existence result holding as long as $\varphi_\Omega(t) \in (-1,1)$, see for instance the work of \cite{CFM} on the so-called Cahn--Hilliard inpainting model \cite{BEG} with logarithmic potential obtained from \eqref{MEQ_3}-\eqref{MEQ_4} by setting $\psi = \psi_{\log}$, $\v = \0$, $\sigma = 0$ and $f_\varphi(\varphi) = \lambda (I - \varphi)$ for given nonnegative $\lambda \in L^{\infty}(\Omega)$ and $|I| \leq 1$ a.e.~in $\Omega$.  Recently, the second author has established a global existence result to the inpainting model with the double obstacle potential $\psi_{\Do}$ in \cite{GLS}.  The key ingredients involved are a careful analysis of the interplay between the source terms and approximations of the singular part $\mathbb{I}_{[-1,1]}$ of $\psi_{\Do}$, as well as showing the source terms lead to the conclusion that $\varphi_\Omega(t) \in (-1,1)$ for all $t > 0$.  Then, previous methods for controlling $\mu_\Omega$ can be applied to achieve global weak existence.

To generalise the methodology of \cite{GLS} for the CHB model \eqref{MEQ} with $\psi_{\Do}$, we found that it suffices to prescribe suitable conditions on $b_\v$, $b_\varphi$, $f_\v$ and $f_\varphi$ at $\varphi = \pm 1$.  Namely, we ask that $b_\v$ and $b_\varphi$ are nonnegative and vanish at $\pm 1$, while $f_\varphi - f_\v$ is negative (resp.~positive) at $1$ (resp.~$-1$).  In Remark \ref{ex:source} below we give a biologically relevant example of $b_\v$, $b_\varphi$, $f_\v$ and $f_\varphi$ satisfying the above conditions.  With the help of a refined version of a key estimate (see Proposition \ref{prop:CFM}), the same methodology can be used to show global weak existence for the CHB model \eqref{MEQ} with $\psi_{\log}$, thereby improving also the results of \cite{CFM,Mir} concerning the inpainting model with the logarithmic potential $\psi_{\log}$. 

By formally sending the bulk and shear viscosities in the stress tensor $\T$ to zero, the CHB model \eqref{MEQ} reduces to a Cahn--Hilliard--Darcy (CHD) tumour model with 
\begin{align*}
\v = -\frac{1}{\nu} \big (\nabla p - (\mu + \chi \sigma) \nabla \varphi \big )
\end{align*}
replacing \eqref{MEQ_2}.  Without source terms and the nutrient, the CHD model is also called the Hele-Shaw--Cahn--Hilliard system, where recent progress on strong well-posedness with the logarithmic potential can be found in \cite{G,GGW}.  Meanwhile, for the CHD tumour model with polynomial $\psi$, it appears that the regularity of the weak solutions in \cite{GLDarcy,GLIndam} is insufficient to replicate the usual procedure of approximating the singular potential with a sequence of polynomial potentials, deriving uniform estimates and passing to the limit.  To the authors' best knowledge the global weak existence to the CHD tumour model remains an open problem, and inspired by the relationship between the Brinkman and Darcy laws, we employ an idea of \cite{EG2} to deduce the global weak existence to the CHD tumour model with singular potentials by scaling the shear and bulk viscosities in an appropriate way.

The second contribution of the present paper is to provide a first result for stationary solutions to the CHB model with singular potentials, where the time derivative in \eqref{MEQ_3} vanishes but the convection term remains, i.e., we allow for the possibility of a non-zero velocity field.  This is interesting as the fluid velocity can be used to inhibit the growth of the tumour cells, and in an optimal control framework, these stationary solutions are ideal candidates in a tracking-type objective functional, e.g.~see \cite{CRW}.  Unfortunately, the particular form of the source terms $\Gamma_{\v}(\varphi, \sigma)$ and $\Gamma_{\varphi}(\varphi, \sigma)$ indicates that the CHB model does not admit an obvious Lyapunov structure, and so it is unlikely that the a priori estimates used to prove weak existence are uniform-in-time.  This is in contrast to the phase field tumour model of \cite{HVO} studied in \cite{CRW,CGH,FGR,Sig1,Sig2} where tools such as global attractors and the {\L}ojasiewicz--Simon inequality can be used to quantify the long-time behaviour of global solutions.  We also mention the recent work \cite{MRS} establishing a global attractor for a simplified model similar to the subsystem \eqref{MEQ_3}-\eqref{MEQ_5} with $\v = \0$ and $\chi = 0$, where despite the lack of an obvious Lyapunov structure, the authors can derive dissipative estimates under some constraints on the model parameters.

However, we opt to prove directly the existence of stationary solutions using the methodology of \cite{GLS}, as oppose to investigate the long-time behaviour of time-dependent solutions.  Thanks to the well-posedness of the Brinkman subsystem \eqref{MEQ_1}, \eqref{MEQ_2}, \eqref{BC_3} and the nutrient subsystem \eqref{MEQ_5}, \eqref{BC_2}, we can express $\sigma = \sigma_\varphi$, $\v = \v_\varphi$ and $p = p_\varphi$ as functions of $\varphi$, since $\mu$ can also be interpreted as a function of $\varphi$ by \eqref{MEQ_4}.  Then, the stationary CHB system is formally equivalent to the fourth order elliptic problem
\begin{align*}
\begin{cases}
\div (\varphi \v_\varphi) + \lap^2 \varphi - \lap \psi'(\varphi) + \chi \lap \sigma_\varphi = \Gamma_\varphi(\varphi, \sigma_\varphi) & \text{ in } \Omega, \\
\deln \varphi = \deln ( \lap \varphi + \chi \sigma_\varphi) = 0 & \text{ on } \Sigma.
\end{cases}
\end{align*}
The novelty of our work lies in controlling the convection term $\div (\varphi \v_\varphi)$ with the help of the unique solvability of the Brinkman system, and the existence result (Theorem \ref{thm:stat}) is a non-trivial extension of \cite{GLS}.  Furthermore, this also shows the existence of stationary solutions to the inpainting model with logarithmic potential, thereby completing the analysis of weak and stationary solutions to the Cahn--Hilliard inpainting model with both singular potentials.

The remainder of this paper is organised as follows.  In section \ref{sec:main} we cover several useful preliminary results and state our main results on the existence of global weak solutions and stationary solutions to the CHB model \eqref{MEQ}-\eqref{BIC} with singular potentials, as well as the global weak existence to the CHD tumour model.  The proofs of these results are then contained in sections \ref{sec:time}, \ref{sec:stat} and \ref{sec:Darcy}, respectively.  In section \ref{sec:appendix} we give a proof of a well-posedness result for the Brinkman system that plays a significant role in our work.

\section{Main results}\label{sec:main}
\subsection{Notation and preliminaries}
For a real Banach space $X$ we denote by $\norm{\cdot}_X$ its norm, by $X^*$ its dual space and by $\inn{\cdot}{\cdot}_X$ the duality pairing between $X^*$ and $X$.  If $X$ is an inner product space the associated inner product is denoted by $(\cdot, \cdot)_X$.  For two matrices $\mathbf{A} = (a_{jk})_{1 \leq j,k \leq d}, \mathbf{B} = (b_{jk})_{1 \leq j,k \leq d}\in \R^{d \times d}$, the scalar product $\mathbf{A} : \mathbf{B}$ is defined as the sum $ \sum_{j,k=1}^{d}a_{jk}b_{jk}$.

For $1 \leq p \leq \infty$, $r \in (1,\infty)$, $\beta \in (0,1)$ and $k \in \N$, the standard Lebesgue and Sobolev spaces on $\Omega$ and on $\Sigma$ are denoted by $L^p := L^p(\Omega)$, $L^p(\Sigma)$, $W^{k,p} := W^{k,p}(\Omega)$, and $W^{\beta,r}(\Sigma)$ with norms $\norml{p}{\cdot}$, $\norm{\cdot}_{L^p(\Sigma)}$, $\normw{k}{p}{\cdot}$ and $\norm{\cdot}_{W^{\beta,r}(\Sigma)}$, respectively.  In the case $p=2$ we use the notation $H^k := W^{k,2}$ with the norm $\normh{k}{\cdot}$.  The space $H_0^1$ is the completion of $C_0^{\infty}(\Omega)$ with respect to the $H^1$-norm, while the space $H_{n}^2$ is defined as $ \{w\in H^2 \colon \deln w = 0 \text{ on } \Sigma\}$.  For the Bochner spaces, we use the notation $L^p(X):= L^p(0,T;X)$ for a Banach space $X$ with $p\in [1,\infty]$, and
\begin{align*}
\norm{\cdot}_{A \cap B} = \norm{\cdot}_{A} + \norm{\cdot}_{B}
\end{align*}
for two or more Bochner spaces $A$ and $B$.  For the dual space $X^*$ of a Banach space $X$, we introduce the (generalised) mean value by 
\begin{equation*}
v_{\Omega}\coloneqq \frac{1}{|\Omega|}\intO v\dx \quad\text{for } v\in L^1, \quad v_{\Omega}^*\coloneqq \frac{1}{|\Omega|}\langle v{,}1\rangle_X\quad\text{for } v\in X^*,
\end{equation*}
and also the subspace of $L^2$-functions with zero mean value:
\begin{align*}
&L_0^2 \coloneqq  \{w\in L^2\colon w_{\Omega}=0\}.
\end{align*}
The corresponding function spaces for vector-valued or tensor-valued functions are denoted in boldface, i.e., $\LP^p$, $\W^{k,p}$, $\H^k$, $\LP^p(\Sigma)$ and $\W^{\beta,r}(\Sigma)$.  
Furthermore, we introduce the space
\begin{align*}
\LP^p_{\div} \coloneqq \{ \f \in \LP^p \, : \, \div (\f) \in L^p \}
\end{align*}
equipped with the norm
\begin{align*}
\norm{\f}_{\LP^p_{\div}} \coloneqq \Big ( \norm{\f}_{\LP^p}^p + \norm{\div(\f)}_{L^p}^p \Big)^{1/p},
\end{align*}
where $\div$ is the weak divergence.  We now state three auxiliary lemmas that are used for the study of our model.  The first lemma concerns the solvability of the divergence problem, which will be useful for the mathematical treatment of equation \eqref{MEQ_1}.
\begin{lem}[{\cite[Sec.~III.3]{Galdi}}]\label{LEM_DIVEQU}
Let $\Omega \subset \R^d$, $d\geq 2$, be a bounded domain with Lipschitz-boundary and let $1<q<\infty$. Then, for every $f \in L^q$ and $\a \in \W^{1-1/q,q}(\Sigma)$ satisfying
\begin{align}\label{DIV_COMP_COND}
\intO f\dx  = \intS \a\cdot\n \dH,
\end{align}
there exists at least one solution $\u \in \W^{1,q}$ to the problem
\begin{align}\label{DIV_EQU}
\begin{cases}
\div(\u) = f & \text{ in }\Omega, \\
\u  = \a & \text{ on }\Sigma,
\end{cases}
\end{align}
satisfying for some positive constant $C = C(\Omega, q)$ the estimate
\begin{align}\label{DIV_EST}
\normW{1}{q}{\u}\leq C\left(\norml{q}{f}+\norm{\a}_{\W^{1-1/q,q}(\Sigma)}\right).
\end{align}
\end{lem}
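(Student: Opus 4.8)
The plan is to reconstruct the classical three-step proof behind Bogovskii's solution of the divergence equation (the cited source being Galdi's book). \textbf{Step 1 --- reduction to homogeneous data.} First I would use that on a Lipschitz domain the trace operator $\W^{1,q} \to \W^{1-1/q,q}(\Sigma)$ is surjective with a bounded right inverse, giving a lifting $\w \in \W^{1,q}$ with $\w = \a$ on $\Sigma$ and $\normW{1}{q}{\w} \le C \norm{\a}_{\W^{1-1/q,q}(\Sigma)}$. Setting $g := f - \div(\w) \in L^q$, the divergence theorem together with the compatibility condition \eqref{DIV_COMP_COND} gives
\begin{align*}
\intO g \dx = \intO f \dx - \intS \w \cdot \n \dH = \intO f \dx - \intS \a \cdot \n \dH = 0 .
\end{align*}
Hence it remains to find $\z \in \W^{1,q}$ with $\div(\z) = g$ in $\Omega$, $\z = \0$ on $\Sigma$ and $\normW{1}{q}{\z} \le C \norml{q}{g}$, since then $\u := \w + \z$ solves \eqref{DIV_EQU} and satisfies \eqref{DIV_EST}.

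\textbf{Step 2 --- the star-shaped case.} Next I would treat the model situation in which $\Omega$ is star-shaped with respect to an open ball $B \Subset \Omega$. Fixing $\omega \in C_0^\infty(B)$ with $\intO \omega \dx = 1$, for $g \in C_0^\infty(\Omega)$ with $\intO g \dx = 0$ I would define the Bogovskii field
\begin{align*}
\z(x) := \intO g(y)\, \frac{x-y}{|x-y|^d} \left( \int_{|x-y|}^{\infty} \omega\Big( y + r\,\tfrac{x-y}{|x-y|} \Big) r^{d-1} \, \d r \right) \dy ,
\end{align*}
and check by direct differentiation that $\z$ is smooth with $\supp \z \Subset \Omega$ and $\div(\z) = g - \omega \intO g \dx = g$. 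The key estimate $\norm{\grad\z}_{\LP^q} \le C(\Omega, q)\norml{q}{g}$ would then follow by splitting the kernel into its principal part, which is homogeneous of degree $-d$ and hence a Calder\'on--Zygmund kernel, and a weakly singular remainder controlled by Young's inequality; the Poincar\'e inequality (using $\z = \0$ on $\Sigma$) upgrades this to $\normW{1}{q}{\z} \le C(\Omega, q) \norml{q}{g}$. By density the map $g \mapsto \z$ extends to a bounded linear operator from $\{ g \in L^q : g_\Omega = 0 \}$ into $\W^{1,q}$, still solving the divergence equation with zero trace.

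\textbf{Step 3 --- general Lipschitz domains.} Finally I would cover a bounded Lipschitz $\Omega$ by finitely many subdomains $\Omega_1, \dots, \Omega_N$, each star-shaped with respect to a ball and ordered so that consecutive ones overlap. Using a subordinate partition of unity and iteratively subtracting correction terms supported in the overlaps $\Omega_i \cap \Omega_{i+1}$, I would decompose the zero-mean $g$ as $g = \sum_{i=1}^N g_i$ with $\supp g_i \subset \Omega_i$, $\int_{\Omega_i} g_i \dx = 0$ and $\sum_i \norml{q}{g_i} \le C \norml{q}{g}$. Applying Step 2 to each $g_i$ produces $\z_i$ supported in $\Omega_i$ with $\div(\z_i) = g_i$ and $\normW{1}{q}{\z_i} \le C \norml{q}{g_i}$; extending by zero and summing, $\z := \sum_i \z_i$ has the required properties, which combined with Step 1 completes the proof.

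\textbf{Main obstacle.} The hard part will be Step 2: proving that Bogovskii's integral operator maps the zero-mean $L^q$ functions boundedly into $\W^{1,q}$ for the whole range $1 < q < \infty$, which relies on the Calder\'on--Zygmund $L^q$-theory for the singular part of the kernel together with precise control of the weakly singular remainder, the star-shaped geometry being exactly what makes these kernel bounds uniform. The lifting in Step 1 is routine, and the only mildly delicate point in Step 3 is producing the zero-mean decomposition $g = \sum_i g_i$.
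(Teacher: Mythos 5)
Your outline is correct and coincides with the argument of the source the paper relies on: the lemma is quoted from Galdi, Sec.~III.3, without an independent proof, and Galdi's proof is exactly the reduction by a trace lifting, the Bogovskii integral operator with Calder\'on--Zygmund estimates on star-shaped domains, and the decomposition of a bounded Lipschitz domain into finitely many star-shaped pieces with a zero-mean splitting of the datum. No gaps beyond the (admittedly substantial) kernel estimates you already flag as the main obstacle.
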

Since we invoke Lemma \ref{LEM_DIVEQU} frequently with $\a = \frac{1}{\abs{\Sigma}} \Big ( \int_\Omega f \dx \Big ) \n$, we introduce 
\begin{align*}
\u = \DD(f) \quad \Leftrightarrow \quad \begin{cases}
\div(\u) = f & \quad \text{ in } \Omega, \\
\u = \tfrac{1}{\abs{\Sigma}} \Big ( \int_\Omega f \dx \Big ) \n & \quad \text{ on } \Sigma,
\end{cases}
\end{align*}
where by the smoothness of the normal $\n$, see \eqref{ass:dom} below, \eqref{DIV_EST} yields
\begin{align*}
\normW{1}{q}{\DD(f)} \leq C \norml{q}{f}.
\end{align*}

The second lemma concerns the existence of weak solutions to the model \eqref{MEQ} which forms the basis of our approximation procedure.

\begin{lem}[{\cite[Thm.~2.5]{EG2}}]\label{THM_WSOL_1}
Suppose the following assumptions are satisfied:
\begin{enumerate}[label=$(\mathrm{A \arabic*})$, ref = $\mathrm{A \arabic*}$]
\item \label{ass:dom} For $d \in \{2,3\}$, $\Omega \subset \R^d$ is a bounded domain with $C^3$-boundary.
\item \label{ass:const} The positive constants $\nu$, $K$ and the nonnegative constant $\chi$ are fixed.  
\item \label{ass:visc} The viscosities $\eta$ and $\lambda$ belong to $C^2(\R) \cap W^{1,\infty}(\R)$ and satisfy
\begin{align}\label{ASS_VISC}
\eta_0\leq \eta(t)\leq \eta_1,\quad 0\leq \lambda(t)\leq \lambda_0\quad\forall t\in\R,
\end{align}
for positive constants $\eta_0,\eta_1$ and a nonnegative constant $\lambda_0$.  The function $h \in C^0(\R) \cap L^{\infty}(\R)$ is nonnegative.
\item \label{ass:source} The source terms $\Gamma_{\v}$ and $\Gamma_\varphi$ are of the form
\begin{align}\label{source:form}
\Gamma_{\v}(\varphi, \sigma) = b_{\v}(\varphi) \sigma + f_{\v}(\varphi), \quad \Gamma_\varphi(\varphi, \sigma) = b_{\varphi}(\varphi)\sigma + f_\varphi(\varphi),
\end{align}
where $b_\v, f_\v \in C^1(\R) \cap W^{1,\infty}(\R)$ and $b_\varphi, f_\varphi \in C^0(\R) \cap L^{\infty}(\R)$.  
\item \label{ass:ini} The initial condition $\varphi_0$ belongs to $H^1$.
\item \label{ass:psi} The function $\psi \in C^2(\R)$ is nonnegative and satisfies
\begin{align}\label{psi1psi2}
\psi(s) \geq R_0|s|^2-R_1, \quad |\psi'(s)| \leq R_2\left(1+|s|\right), \quad |\psi''(s)|\leq R_3\quad\forall s \in \R, 
\end{align}
where $R_0$, $R_1$, $R_2$, $R_3$ are positive constants.
\end{enumerate}
Then, there exists a quintuple $(\varphi,\mu,\sigma,\v,p)$ satisfying
\begin{align*}
&\varphi \in H^1(0,T;(H^1)^*)\cap L^{\infty}(0,T;H^1)\cap L^4(0,T;H_n^2)\cap L^2(0,T;H^3) ,\quad \sigma\in L^{\infty}(0,T;H^2),\\
&\mu\in L^4(0,T;L^2)\cap L^2(0,T;H^1),\quad\v\in L^{\frac{8}{3}}(0,T;\H^1),\quad p\in L^{2}(0,T;L^2),
\end{align*}
and is a weak solution to \eqref{MEQ}-\eqref{BIC} in the sense that \eqref{MEQ_1} holds a.e.~in $\Omega_T$ and
\begin{subequations}
\begin{alignat}{3}
\label{WFORM_1a}0 &= \intO 2 \eta(\varphi) \D \v : \D \bPhi + (\lambda(\varphi) \div (\v) - p ) \div (\bPhi )+ \nu \v \cdot \bPhi  - (\mu+\chi\sigma) \grad\varphi \cdot \bPhi\dx, \\
\label{WFORM_1b} 0 &= \inn{\delt \varphi}{\zeta}_{H^1}  + \intO \grad\mu\cdot\grad \zeta+ (\grad\varphi\cdot\v+ \varphi\Gamma_{\v}(\varphi,\sigma) - \Gamma_{\varphi}(\varphi,\sigma)) \zeta \dx,  \\
\label{WFORM_1c} 0 &= \intO (\mu+ \chi\sigma) \zeta - \psi'(\varphi)\zeta - \grad\varphi\cdot\grad\zeta \dx,\\
\label{WFORM_1d} 0  &=  \intO \grad\sigma\cdot\grad\zeta + h(\varphi)\sigma\zeta \dx +\intS K(\sigma-1)\zeta \dH,
\end{alignat}
\end{subequations} 
for a.e.~$t\in(0,T)$ and for all $\bPhi\in \H^1$ and $\zeta \in H^1$.  Furthermore, there exists a positive constant $C$ not depending on $(\varphi, \mu, \sigma, \v, p)$ such that 
\begin{equation}\label{THM_WSOL_1_EST_1}
\begin{aligned}
&\norm{\varphi}_{H^1((H^1)^*)\cap L^{\infty}(H^1)\cap L^4(H^2)\cap L^2(H^3) } + \norm{\sigma}_{L^{\infty}(H^2)} + \norm{\mu}_{L^4(L^2)\cap L^2(H^1)}\\
&\quad + \norm{\div(\varphi\v)}_{L^2(L^2)}  + \norm{\v}_{L^{8/3}(\H^1)}+ \norm{p}_{L^2(L^2)}\leq C.
\end{aligned}
\end{equation}
\end{lem}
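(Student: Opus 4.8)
The plan is to construct a solution by a Galerkin approximation in the Cahn--Hilliard pair $(\varphi,\mu)$, solving at each stage the nutrient and Brinkman subsystems exactly in terms of the discrete phase field. Let $\{w_i\}_{i\ge 1}$ be the eigenfunctions of the Neumann Laplacian, orthonormal in $L^2$ and orthogonal in $H^1$, and set $W_n = \mathrm{span}\{w_1,\dots,w_n\}$; seek $\varphi_n = \sum_{i=1}^n \alpha_i(t) w_i$ and $\mu_n = \sum_{i=1}^n \beta_i(t) w_i$. For given $\varphi_n$, the nutrient problem \eqref{MEQ_5}, \eqref{BC_2} is linear and elliptic, hence uniquely solvable for $\sigma_n$ by Lax--Milgram (coercivity from $h\ge 0$, $K>0$); the weak maximum principle gives $0\le\sigma_n\le 1$ a.e., and since $h\in L^{\infty}$, elliptic regularity yields a bound for $\sigma_n$ in $H^2$ that is uniform in $n$ and in $\varphi_n$. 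For given $\varphi_n,\mu_n,\sigma_n$, the Brinkman system \eqref{MEQ_1}, \eqref{MEQ_2}, \eqref{BC_3} is solved for $(\v_n,p_n)$ using the well-posedness result of Section \ref{sec:appendix}, whose coercivity rests on Korn's inequality because only the frictionless condition $\T\n=\0$ is prescribed; this gives $\norm{\v_n}_{\H^1} \le C\big(\norm{(\mu_n+\chi\sigma_n)\grad\varphi_n}_{(\H^1)^*} + \norm{\Gamma_{\v}(\varphi_n,\sigma_n)}_{L^2}\big)$ and a companion bound for $p_n$ in $L^2$, the source term being finite since $b_\v,f_\v\in W^{1,\infty}$ and $\sigma_n\in H^2$. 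Projecting \eqref{MEQ_3}, \eqref{MEQ_4} onto $W_n$ then closes the scheme: the discrete \eqref{MEQ_4} expresses $\beta$ as a smooth function of $\alpha$ (using $\psi\in C^2$ and the smooth dependence of $\sigma_n$ on $\varphi_n$), and substituting into the discrete \eqref{MEQ_3} gives $\dot\alpha = F(t,\alpha)$ with $F$ continuous, so the Cauchy--Peano theorem yields a local solution on $[0,T_n)$.

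Next I would establish the a priori estimates, which are the heart of the matter. Testing the discrete \eqref{MEQ_3} with $\mu_n$ and the discrete \eqref{MEQ_4} with $\delt\varphi_n$ gives the Cahn--Hilliard energy balance, into which one substitutes the Brinkman equation tested with $\v_n$; the outcome is a relation of the form $\ddt\intO\big(\psi(\varphi_n)+\tfrac12\abs{\grad\varphi_n}^2\big)\dx + \intO\big(\abs{\grad\mu_n}^2 + 2\eta(\varphi_n)\abs{\D\v_n}^2 + \nu\abs{\v_n}^2\big)\dx = (\text{source and coupling terms})$. The chemotactic coupling $\chi\intO\sigma_n\delt\varphi_n\dx$ is not the time derivative of an energy (recall $h$ is merely continuous) and is instead handled by testing the discrete \eqref{MEQ_3} with $\sigma_n$ and invoking the uniform $H^2$-bound on $\sigma_n$; the pressure contribution $\intO p_n\div\v_n\dx$ is absorbed through the inf-sup estimate for the Brinkman pressure; and the remaining source terms are bounded using $b_\varphi,f_\varphi\in L^{\infty}$ together with the growth condition $\psi(s)\ge R_0\abs{s}^2-R_1$, which controls $\norm{\varphi_n}_{H^1}$ by the energy. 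Absorbing the $\grad\mu_n$ and $\v_n$ contributions by Young's and Korn's inequalities and applying Gronwall yields uniform bounds for $\varphi_n$ in $L^{\infty}(0,T;H^1)$, for $\grad\mu_n$ in $L^2(0,T;\LP^2)$, and for $(\v_n,p_n)$ in $L^2(0,T;\H^1)\times L^2(0,T;L^2)$. The mean value of $\mu_n$ is recovered by testing the discrete \eqref{MEQ_4} with the constant eigenfunction: $\abs{(\mu_n)_\Omega}\le C(1+\norm{\varphi_n}_{L^1})$ by the linear growth $\abs{\psi'(s)}\le R_2(1+\abs{s})$ and $0\le\sigma_n\le 1$, so that $\mu_n$ is bounded in $L^2(0,T;H^1)$. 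Finally, reading the discrete \eqref{MEQ_4} as $-\lap\varphi_n = \mu_n+\chi\sigma_n-\psi'(\varphi_n)$ with Neumann data and right-hand side in $L^2(0,T;H^1)$ (using $\abs{\psi''}\le R_3$), elliptic regularity gives $\varphi_n$ bounded in $L^2(0,T;H^3)$; interpolation with $L^{\infty}(0,T;H^1)$ then gives $\varphi_n\in L^4(0,T;H^2_n)$, which yields $\mu_n\in L^4(0,T;L^2)$, and interpolating the latter with $L^2(0,T;H^1)$ puts $\mu_n\in L^{8/3}(0,T;L^3)$; hence the Brinkman forcing $(\mu_n+\chi\sigma_n)\grad\varphi_n$ lies in $L^{8/3}(0,T;\LP^{6/5})\hookrightarrow L^{8/3}(0,T;(\H^1)^*)$ and the Brinkman estimate upgrades $\v_n$ to $L^{8/3}(0,T;\H^1)$. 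A comparison argument in the discrete \eqref{MEQ_3} bounds $\delt\varphi_n$ in $L^2(0,T;(H^1)^*)$, and all of these bounds force $T_n=T$.

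Finally I would pass to the limit $n\to\infty$. The uniform estimates and the Aubin--Lions--Simon lemma give, along a subsequence, the weak and weak-$*$ convergences in the spaces of the statement and the strong convergence $\varphi_n\to\varphi$ in $C([0,T];L^2)\cap L^2(0,T;H^1)$ with convergence a.e.\ in $\Omega_T$; since also $\sigma_n\to\sigma$ strongly in $H^1$ (from its elliptic equation and the strong convergence of $\varphi_n$), this suffices to identify the limits of the nonlinear terms $\psi'(\varphi_n)$, $b_\varphi(\varphi_n)\sigma_n$, $f_\varphi(\varphi_n)$, $\eta(\varphi_n)\D\v_n$, $\lambda(\varphi_n)\div\v_n$, $(\mu_n+\chi\sigma_n)\grad\varphi_n$ and $\varphi_n\v_n$. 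Passing to the limit in the Galerkin identities and using the density of $\bigcup_n W_n$ in $H^1$ gives \eqref{WFORM_1a}--\eqref{WFORM_1d}; \eqref{MEQ_1} holds a.e.\ in $\Omega_T$ by construction, and \eqref{THM_WSOL_1_EST_1} follows from weak lower semicontinuity of the norms. I expect the main obstacle to be twofold: in the construction step, the well-posedness and coercivity of the Brinkman system under the purely frictionless boundary condition, with a forcing $(\mu+\chi\sigma)\grad\varphi$ of low integrability (this is the role of Section \ref{sec:appendix}); and in the estimates, closing the energy inequality despite the non-solenoidal, solution-dependent velocity --- so that $\div(\varphi_n\v_n)$ and the Brinkman forcing must both be reabsorbed into the dissipation --- together with the chemotactic term $\chi\intO\sigma_n\delt\varphi_n\dx$, which cannot be written as an energy derivative and has to be routed through the nutrient equation. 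The interpolation bootstrap delivering the sharp time--space exponents $L^4(H^2)$, $L^2(H^3)$ and $L^{8/3}(\H^1)$ is the remaining delicate point.
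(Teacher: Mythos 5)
This statement is not proved in the paper at all: Lemma~\ref{THM_WSOL_1} is imported verbatim from \cite[Thm.~2.5]{EG2}, with only the remark that the proof there carries over to potentials satisfying \eqref{psi1psi2}; so there is no in-paper argument to compare against, and your Faedo--Galerkin reconstruction is the natural (and, to the best of my reading, the intended) route to the cited result. Your sketch is broadly sound --- the solvability of the nutrient and Brinkman subsystems as functions of the discrete phase field, the energy estimate, the mean-value control of $\mu_n$ via the linear growth $|\psi'(s)|\le R_2(1+|s|)$, and the interpolation bootstrap to the exponents $L^4(H^2)$, $L^2(H^3)$, $L^4(L^2)$ and $L^{8/3}(\H^1)$ are all the right ingredients and the exponent bookkeeping checks out. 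Two places where your outline papers over details that need care: (i) in the Galerkin scheme, $\sigma_n$ (and $\mu_n+\chi\sigma_n$) is not an admissible test function for the projected equation \eqref{MEQ_3}; you must test with its projection $\Pi_n\sigma_n$ onto $W_n$, which is harmless for the spectral basis (note $\int_\Omega \sigma_n\,\delt\varphi_n\dx=\int_\Omega (\Pi_n\sigma_n)\,\delt\varphi_n\dx$ since $\delt\varphi_n\in W_n$, and $\Pi_n$ is $H^1$-stable), but it should be said; (ii) absorbing $\int_\Omega p_n\,\div(\v_n)\dx=\int_\Omega p_n\,\Gamma_\v\dx$ through the pressure estimate is delicate, because that estimate involves the capillary forcing $(\mu_n+\chi\sigma_n)\grad\varphi_n$ and hence the full $H^1$-norm of $\mu_n$ including its mean, so the mean-value bound must be interleaved into the Gronwall loop rather than invoked afterwards; the cleaner device --- used in this paper's own energy estimate \eqref{APRI_EQ_8} and compatible with your scheme since the velocity is not discretised --- is to test the Brinkman system with $\v_n-\u$, $\u=\DD(\Gamma_\v)$ from Lemma~\ref{LEM_DIVEQU}, which removes the pressure from the energy identity altogether and handles the non-solenoidality at the same time, the $L^2$-bound on $p_n$ being recovered separately by testing with $\DD(p_n)$.
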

\begin{rem}
We point out that the assumptions on the potential in \cite{EG2} are more restrictive in the case of potentials with quadratic growth. However, it can be checked easily that the proof of \cite[Thm.~2.5]{EG2} follows through when using potentials that satisfy \eqref{psi1psi2}.
\end{rem}
The third lemma concerns the solvability of the Brinkman system, which we will use to study stationary solutions.  A proof is contained in the appendix.
\begin{lem}\label{lem:Brink}
Let $\Omega \subset \R^d$, $d = 2,3$, be a bounded domain with $C^3$-boundary $\Sigma$ and outer unit normal $\n$.  Let $c \in W^{1,r}$ with $r > d$ be given and fix exponent $q \leq r$ such that $q > 1$ for $d = 2$ and $q \geq \frac{6}{5}$ for $d = 3$, and suppose the functions $\eta(\cdot)$ and $\lambda(\cdot)$ satisfy \eqref{ass:visc}.  Then, for any $\f \in \LP^q$, $g \in W^{1,q}$, $\bh \in \W^{1-1/q,q}(\Sigma)$, there exists a unique solution $(\v, p) \in \W^{2,q} \times W^{1,q}$ to
\begin{subequations}\label{BM_SUBSY}
\begin{alignat}{2}
- \div ( 2 \eta(c) \D \v + \lambda(c) \div (\v) \I) + \nu \v + \nabla p = \f & \qquad\text{a.e.~in } \Omega, \\
\div(\v) = g & \qquad\text{a.e.~in } \Omega, \\
(2 \eta(c) \D \v + \lambda(c) \div(\v) \I - p \I)\n = \bh  &\qquad\text{a.e.~on } \Sigma,
\end{alignat}
\end{subequations}
satisfying the following estimate
\begin{align}\label{Brink:strong:est}
\normW{2}{q}{\v} + \normw{1}{q}{p} \leq C \Big (\normL{q}{\f} + \normw{1}{q}{g} + \norm{\bh}_{\W^{1-\frac{1}{q}, q}(\Sigma)} \Big ),
\end{align}
with a constant $C$ depending only on $\eta_0$, $\eta_1$, $\lambda_0$, $\nu$, $q$, $\normw{1}{r}{c}$ and $\Omega$.
\end{lem}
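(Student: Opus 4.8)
The plan is to first produce a weak solution $(\v,p)\in\H^1\times L^2$ of \eqref{BM_SUBSY} via the Babuška--Brezzi theory, and then to bootstrap it to the asserted $\W^{2,q}\times W^{1,q}$ regularity; uniqueness in $\W^{2,q}\times W^{1,q}$ will then be automatic, since any such solution is in particular a weak solution. To carry out the first step, I would test the momentum equation with $\w\in\H^1$, integrate by parts and insert the boundary condition, obtaining the saddle-point problem of finding $(\v,p)\in\H^1\times L^2$ with $a_c(\v,\w)-(p,\div\w)_{L^2}=(\f,\w)_{L^2}+\intS\bh\cdot\w\dH$ and $(\div\v,\zeta)_{L^2}=(g,\zeta)_{L^2}$ for all $\w\in\H^1$, $\zeta\in L^2$, where $a_c(\v,\w):=\intO 2\eta(c)\D\v:\D\w+\lambda(c)\div\v\,\div\w+\nu\v\cdot\w\dx$. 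By \eqref{ASS_VISC} the form $a_c$ is bounded, and by Korn's (second) inequality together with $\nu>0$ and $\eta\ge\eta_0$ it is coercive on all of $\H^1$; the bilinear form $(\w,\zeta)\mapsto(\div\w,\zeta)_{L^2}$ satisfies the inf--sup condition because $\div\colon\H^1\to L^2$ has a bounded right inverse (given $\zeta\in L^2$, Lemma \ref{LEM_DIVEQU} with $\a=\tfrac1{\abs{\Sigma}}(\intO\zeta\dx)\n$ yields $\w\in\H^1$ with $\div\w=\zeta$ and $\norm{\w}_{\H^1}\le C\norm{\zeta}_{L^2}$). The lower bounds on $q$ guarantee $\LP^q\hookrightarrow(\H^1)^*$ and that $\w\mapsto\intS\bh\cdot\w\dH$ is bounded on $\H^1$, so the data form a bounded functional, and the Babuška--Brezzi theorem provides a unique $(\v,p)$ with $\norm{\v}_{\H^1}+\norm{p}_{L^2}\le C(\normL{q}{\f}+\norm{g}_{L^2}+\norm{\bh}_{\W^{1-1/q,q}(\Sigma)})$.

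Next I would exploit that $c\in W^{1,r}$ with $r>d$: by Morrey's embedding $c\in C^{0,1-d/r}(\overline\Omega)$, and the chain rule gives $\eta(c),\lambda(c)\in W^{1,r}\cap C^{0,1-d/r}(\overline\Omega)$ with norms controlled by $\normw{1}{r}{c}$. Expanding the divergence-form operator, using $\div\v=g$, and moving to the right-hand side the lower-order terms produced by differentiating the coefficients, the momentum equation becomes
\[
-\eta(c)\lap\v+\nu\v+\nabla p=\f+(\eta(c)+\lambda(c))\nabla g+2\eta'(c)\,(\D\v)\nabla c+\lambda'(c)\,g\,\nabla c=:\F ,
\]
together with $\div\v=g$ and the boundary condition $(2\eta(c)\D\v-p\I)\n=\bh-\lambda(c)\,g\,\n=:\bpsi$. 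This is an ADN-elliptic Stokes-type system with the \emph{continuous} leading coefficient $\eta(c)$ and the frictionless (Neumann-type) boundary operator.

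The core of the argument is then an $L^s$-regularity estimate for this system: for every admissible $s$, a solution with $\F\in\LP^s$, $g\in W^{1,s}$, $\bpsi\in\W^{1-1/s,s}(\Sigma)$ lies in $\W^{2,s}\times W^{1,s}$, with a constant depending only on $\eta_0,\eta_1,\lambda_0,\nu,s,\Omega$ and the modulus of continuity of $\eta(c)$ (hence on $\normw{1}{r}{c}$). I would prove this by the localization--freezing technique: cover $\overline\Omega$ by finitely many small balls on which $\eta(c)$ deviates from a constant by an arbitrarily small amount (uniform continuity of $c$), so that on each ball the operator is a small $\LP^s$-perturbation of a constant-coefficient Stokes operator, and invoke the whole-space Stokes estimate in the interior balls and the half-space estimate for the frictionless boundary condition near $\Sigma$. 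I would then bootstrap in the integrability exponent: from $\v\in\H^1$ one has $\D\v\in\LP^2$, the estimate gives $\v\in\W^{2,s_0}$ for some $s_0>2$, hence $\D\v\in W^{1,s_0}$ embeds into a Lebesgue space of higher integrability, and since $\nabla c\in\LP^r$ the term $\eta'(c)(\D\v)\nabla c$ then lies in $\LP^{s_1}$ with $s_1>s_0$; after finitely many iterations one reaches $\min(q,r)=q$. It is exactly here that $q\le r$ is needed, to keep the coefficient-derivative products $(\D\v)\nabla c$ and $g\,\nabla c$ in $\LP^q$ (and $\lambda(c)g\in W^{1,q}$, so $\bpsi\in\W^{1-1/q,q}(\Sigma)$), while $\f$ and $g$ already carry the required integrability. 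Thus $\v\in\W^{2,q}$, and $\nabla p=\F+\eta(c)\lap\v-\nu\v\in\LP^q$ gives $p\in W^{1,q}$; combining with the first step and absorbing lower-order terms yields \eqref{Brink:strong:est}.

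The main obstacle will be this $L^q$-regularity step — specifically, establishing the model half-space estimate for the frictionless (do-nothing) boundary condition, equivalently verifying the Lopatinskii--Shapiro complementing condition for the Brinkman operator paired with this boundary operator, then running the freezing-coefficient argument with the merely continuous leading coefficient $\eta(c)$, and organising the exponent bootstrap so that every term generated by differentiating the variable coefficients remains in $\LP^q$. By comparison, the weak solvability in the first step and the algebraic reduction in the second are routine.
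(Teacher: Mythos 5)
Your overall architecture --- weak solution, elliptic regularity bootstrap, uniqueness inherited from the weak class via $\W^{2,q} \times W^{1,q} \hookrightarrow \H^1 \times L^2$ --- matches the paper, and your Babu\v{s}ka--Brezzi treatment of the weak problem is a legitimate substitute for the paper's route (explicit special solution $(\v_0,p_0)$, a homogeneous Dirichlet problem and a divergence-free traction problem, each solved by Lax--Milgram on solenoidal fields with the pressure recovered afterwards). Where you genuinely diverge is the regularity step. The paper never localizes or freezes coefficients: it first removes the $\lambda$-term by observing that $\lambda(c)\div(\v) = \lambda(c) g$ can be absorbed into the pressure ($p = \hat p + \lambda(c) g$), and then rescales the test function by $\eta(c)$ (equivalently, divides the equation by $\eta(c)$ and takes $\hat p/\eta(c)$ as the new pressure), which converts the problem into a \emph{unit-viscosity} Stokes/Brinkman system with traction boundary condition whose right-hand side $\k$ carries the commutator terms $(2\eta(c)\D\v - \hat p \I)\nabla(1/\eta(c))$. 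The constant-coefficient $\W^{2,q}\times W^{1,q}$ solvability of that model problem is then simply quoted: \cite{FS} for the Dirichlet part and \cite{ShiShi} for the Neumann/traction part (this is where the $C^3$ boundary enters). In other words, the half-space model estimate / Lopatinskii--Shapiro verification that you single out as your ``main obstacle'' is precisely what the paper arranges never to have to prove; if you keep the localization--freezing route you must either supply that analysis or cite it, so as written the crux of your argument is still open, even though the route itself is viable and, if carried out, somewhat more general.

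One concrete slip in your bootstrap: with $\D\v \in \LP^2$ and $\nabla c \in \LP^r$ the coefficient-derivative term lies only in $\LP^{s}$ with $\tfrac{1}{s} = \tfrac{1}{2} + \tfrac{1}{r}$, i.e.\ $s = \tfrac{2r}{2+r} < 2$, so the first application of the regularity estimate gives $\v \in \W^{2,s}$ with $s<2$, not $\W^{2,s_0}$ with $s_0 > 2$. It is the Sobolev embedding $\W^{2,s} \subset \W^{1,t}$, $\tfrac{1}{t} = \tfrac{1}{s} - \tfrac{1}{d}$, together with $r > d$, that pushes $\D\v$ strictly above $L^2$-integrability; iterating $\tfrac{1}{s_k} = \tfrac{1}{s_{k-1}} + \tfrac{1}{r} - \tfrac{1}{d}$ (a strict increase since $r>d$) reaches $q$ in finitely many steps, which is exactly the paper's iteration, and the case $q \leq \tfrac{2r}{2+r}$ should be treated separately since no iteration is needed there. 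With that bookkeeping corrected, and the model traction-boundary estimate either proved or properly cited, your plan closes.
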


\subsection{The time-dependent problem}
We begin with a suitable notion of weak solutions for the model with the double obstacle potential $\psi_{\Do}$ and the logarithmic potential $\psi_{\log}$.

\begin{defn}\label{defn:timedep}
A quintuple $(\varphi,\mu,\sigma,\v,p)$ is a weak solution to the CHB system \eqref{MEQ}-\eqref{BIC} with the double obstacle potential $\psi_{\Do}$ if the following properties hold:
\begin{enumerate}
\item[$(\mathrm{a})$] the functions satisfy
\begin{align*}
& \varphi \in H^1(0,T;(H^1)^*)\cap L^{\infty}(0,T;H^1) \cap L^2(0,T;H^2_n), \quad \mu \in L^2(0,T;H^1), \\
& \sigma \in L^{\infty}(0,T;H^2),  \quad \v \in L^2(0,T;\H^1), \quad p \in L^2(0,T;L^2)
\end{align*}
with $\varphi(0) = \varphi_0$ a.e.~in $\Omega$.
\item[$(\mathrm{b})$] equation \eqref{MEQ_1} holds a.e.~in $\Omega_T$, while \eqref{WFORM_1a}, \eqref{WFORM_1b} and \eqref{WFORM_1d} are satisfied for a.e.~$t \in (0,T)$ and for all $\boldsymbol{\Phi} \in \H^1$ and $\zeta \in H^1$.
\item[$(\mathrm{c}1)$] for a.e.~$t \in (0,T)$, $\varphi(t) \in \KK := \{ f \in H^1 : \abs{f} \leq 1 \text{ a.e.~in } \Omega \}$ and
\begin{align}\label{subdiff}
\intO (\mu + \chi \sigma + \varphi) (\zeta - \varphi) - \grad \varphi \cdot \grad(\zeta - \varphi) \dx \leq 0 \quad \forall \zeta \in \KK.
\end{align}
\end{enumerate}
We say that $(\varphi, \mu, \sigma, \v, p)$ is a weak solution to \eqref{MEQ}-\eqref{BIC} with the logarithmic potential $\psi_{\log}$ if properties $(\mathrm{a})$ and $(\mathrm{b})$ hold along with
\begin{enumerate}
\item[$(\mathrm{c}2)$] $|\varphi(x,t)| < 1$ for a.e.~$(x,t) \in \Omega \times (0,T)$ and for a.e.~$t \in (0,T)$,
\begin{align}\label{log:weak}
\intO (\mu + \chi \sigma - \psi_{\log}'(\varphi)) \zeta - \grad \varphi \cdot \grad \zeta \dx = 0 \quad \forall \zeta \in H^1.
\end{align}
\end{enumerate}
\end{defn}

Our first result concerns the existence of weak solutions to the CHB system \eqref{MEQ}-\eqref{BIC} with singular potentials.
\begin{thm}\label{thm:timedep}
Suppose \eqref{ass:dom}-\eqref{ass:visc} hold along with  
\begin{enumerate}[label=$(\mathrm{B \arabic*})$, ref = $\mathrm{B \arabic*}$]
\item \label{ass:do:source} The source terms $\Gamma_{\v}$ and $\Gamma_{\varphi}$ are of the form \eqref{source:form} with $f_{\v} \in C^1([-1,1])$, $f_{\varphi}\in C^0([-1,1])$, nonnegative $b_{\v} \in C^1([-1,1])$, nonnegative $b_{\varphi} \in C^0([-1,1])$ satisfying
\begin{align}\label{ASS_SOURCE} 
b_{\v}(\pm 1) = b_{\varphi}(\pm 1) = 0, \quad f_{\varphi}(1) - f_{\v}(1) < 0, \quad f_{\varphi}(-1) + f_{\v}(-1) > 0.
\end{align}
\item \label{ass:do:ini} The initial condition $\varphi_0 $ belongs to $\KK$.
\end{enumerate}
Then, there exists a weak solution $(\varphi, \mu, \sigma, \v, p)$ to \eqref{MEQ}-\eqref{BIC} with the double obstacle potential $\psi_{\Do}$ in the sense of Definition \ref{defn:timedep}, and $\sigma$ additionally satisfies $0 \leq \sigma \leq 1$ a.e.~in $\Omega_T$.\\

\noindent In addition, if the following assumptions are satisfied:
\begin{enumerate}[label=$(\mathrm{C \arabic*})$, ref = $\mathrm{C \arabic*}$]
\item \label{ass:log:source} There exists a constant $c$ such that for any $0 < \delta \ll 1$, 
\begin{align*}
b_\varphi(s) \leq c \delta \quad \text{ for all } s \in [-1, -1+\delta] \cup [1-\delta, 1].
\end{align*}
\item \label{ass:log:source:2}  $b_\varphi(s) \log ( \frac{1+s}{1-s} ) \in C^0([-1,1])$.
\item \label{ass:log:ini} The initial condition $\varphi_0 \in H^1(\Omega)$ satisfies $|\varphi_0(x)| < 1$ for a.e.~$x \in \Omega$.
\end{enumerate}
Then, there exists a weak solution $(\varphi, \mu, \sigma, \v, p)$ to \eqref{MEQ}-\eqref{BIC} with the logarithmic potential $\psi_{\log}$ in the sense of Definition \ref{defn:timedep}, and $0 \leq \sigma \leq 1$ a.e.~in $\Omega_T$.  Furthermore, for a.e.~$t \in (0,T)$, it holds that
\begin{equation}\label{log:energy}
\begin{aligned}
& \intO \psi_{\log}(\varphi(t)) + \tfrac{1}{2} \abs{\nabla \varphi(t)}^2 \dx + \int_0^t \intO |\nabla \mu|^2 + 2 \eta(\varphi) |\D \v|^2 + \nu |\v|^2 \dx \dt \\
& \quad \leq \int_0^t \intO - \chi \nabla \mu \cdot \nabla \sigma + (\Gamma_\varphi - \varphi \Gamma_\v)(\psi_{\log}'(\varphi) - \lap \varphi) \dx \dt \\
& \qquad + \int_0^t \intO 2 \eta(\varphi) \D \v \colon \D\u  + \nu \v\cdot \u \dx  - (\psi_{\log}'(\varphi) -\lap\varphi)\grad\varphi \cdot \u \dx \dt \\
& \qquad + \intO \psi_{\log}(\varphi_0) + \tfrac{1}{2} \abs{\nabla \varphi_0}^2 \dx,
\end{aligned}
\end{equation}
where $\u=\DD(\Gamma_{\v}(\varphi, \sigma))$.
\end{thm}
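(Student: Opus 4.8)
\medskip
The plan is an approximation-and-compactness argument in the spirit of \cite{GLS}, using Lemma~\ref{THM_WSOL_1} to generate approximate solutions, with the genuinely new work lying in the uniform-in-$\delta$ control of the mean value $\mu_\Omega$ and in the treatment of the non-solenoidal Brinkman velocity. First I would regularise the singular potential by a family $\psi_\delta\in C^2(\R)$, $0<\delta\ll1$: for the double obstacle, $\psi_\delta(s)=\tfrac12(1-s^2)+\widehat\beta_\delta(s)$ with $\widehat\beta_\delta$ the Moreau--Yosida regularisation of $\mathbb{I}_{[-1,1]}$, and for the logarithmic potential, extend $\psi_{\log}'$ affinely outside $[-1+\delta,1-\delta]$ and integrate. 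Each $\psi_\delta$ is nonnegative, satisfies \eqref{psi1psi2} (with $\delta$-dependent constants), $\psi_\delta\le\psi$, and $\psi_\delta'\to\psi'$ in the graph sense. I would also extend $b_\v,f_\v$ to $C^1(\R)\cap W^{1,\infty}(\R)$ and $b_\varphi,f_\varphi$ to $C^0(\R)\cap L^\infty(\R)$, keeping $b_\v,b_\varphi\ge0$ and preserving the sign conditions \eqref{ASS_SOURCE} in a neighbourhood of $[-1,1]$. Lemma~\ref{THM_WSOL_1} then produces approximate solutions $(\varphi_\delta,\mu_\delta,\sigma_\delta,\v_\delta,p_\delta)$ with the stated regularity, and testing \eqref{WFORM_1d} with $\sigma_\delta^-$ and $(\sigma_\delta-1)^+$ (using $h\ge0$, $K>0$) gives $0\le\sigma_\delta\le1$ and a $\delta$-independent $L^\infty(0,T;H^2)$ bound for $\sigma_\delta$.

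Next I would derive the energy estimate. Introduce the divergence corrector $\u_\delta:=\DD(\Gamma_\v(\varphi_\delta,\sigma_\delta))$ from Lemma~\ref{LEM_DIVEQU}, bounded in $L^\infty(0,T;\W^{1,q})$ for suitable $q$ since $\Gamma_\v(\varphi_\delta,\sigma_\delta)$ is bounded in $L^\infty(0,T;L^q)$. Testing \eqref{WFORM_1a} with the solenoidal field $\v_\delta-\u_\delta$ (so the pressure drops out), \eqref{WFORM_1b} with $\mu_\delta$, \eqref{WFORM_1c} with $\delt\varphi_\delta$, and using \eqref{WFORM_1d} together with $\deln\mu_\delta=0$ to rewrite the nutrient contribution, I would obtain the differential identity underlying \eqref{log:energy},
\[
\ddt\intO\Big(\psi_\delta(\varphi_\delta)+\tfrac12|\nabla\varphi_\delta|^2\Big)\dx+\intO|\nabla\mu_\delta|^2+2\eta(\varphi_\delta)|\D\v_\delta|^2+\nu|\v_\delta|^2\dx=\CR_\delta,
\]
where, with $m_\delta:=\mu_\delta+\chi\sigma_\delta=\psi_\delta'(\varphi_\delta)-\lap\varphi_\delta$, the right-hand side $\CR_\delta$ collects $-\chi\intO\nabla\mu_\delta\cdot\nabla\sigma_\delta\dx$, the corrector terms $\intO 2\eta(\varphi_\delta)\D\v_\delta:\D\u_\delta+\nu\v_\delta\cdot\u_\delta-m_\delta\,\nabla\varphi_\delta\cdot\u_\delta\dx$, and the source term $\intO(\Gamma_\varphi-\varphi_\delta\Gamma_\v)m_\delta\dx$. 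After applying Korn's inequality (together with the $\nu|\v_\delta|^2$ term) and decomposing $m_\delta=(m_\delta-m_{\delta,\Omega})+m_{\delta,\Omega}$, the only genuinely singular quantity left to control is $\intO(\Gamma_\varphi-\varphi_\delta\Gamma_\v)\psi_\delta'(\varphi_\delta)\dx$, which is bounded from above using Proposition~\ref{prop:CFM}: the conditions $f_\varphi(1)-f_\v(1)<0$, $f_\varphi(-1)+f_\v(-1)>0$ with $b_\v(\pm1)=b_\varphi(\pm1)=0$ make $(\Gamma_\varphi-\varphi\Gamma_\v)\psi_\delta'(\varphi)$ bounded above near $\varphi=\pm1$, while in the logarithmic case assumption \eqref{ass:log:source} is what absorbs the product $b_\varphi(\varphi_\delta)\sigma_\delta\psi_\delta'(\varphi_\delta)$ in the region where $\psi_\delta'$ is large.

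To close the estimates I would control $\mu_{\delta,\Omega}$. Integrating \eqref{WFORM_1c} gives $\mu_{\delta,\Omega}=(\psi_\delta'(\varphi_\delta))_\Omega-\chi\sigma_{\delta,\Omega}$, so it suffices to bound $(\psi_\delta'(\varphi_\delta))_\Omega$ in $L^2(0,T)$; provided $\varphi_{\delta,\Omega}(t)$ lies in a fixed compact subset of $(-1,1)$ uniformly in $\delta$, the classical coercivity estimate for convex approximations gives $\intO|\psi_\delta'(\varphi_\delta)|\dx\le C\intO\psi_\delta'(\varphi_\delta)(\varphi_\delta-\varphi_{\delta,\Omega})\dx+C$, whose right side is handled by testing \eqref{WFORM_1c} with $\varphi_\delta-\varphi_{\delta,\Omega}$ and using the $L^\infty(H^1)$ bound on $\varphi_\delta$ and the gradient bound on $\mu_\delta$. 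The confinement $|\varphi_{\delta,\Omega}(t)|\le1-\varepsilon_0$ comes from the source structure: the energy bound forces the excursion of $\varphi_\delta$ outside $[-1,1]$ to be $O(\delta)$, while integrating \eqref{MEQ_3} over $\Omega$ and invoking $b_\v(\pm1)=b_\varphi(\pm1)=0$ and the sign of $f_\varphi\mp f_\v$ at $\pm1$ shows $\ddt\varphi_{\delta,\Omega}$ is strictly negative (resp.\ positive) whenever $\varphi_{\delta,\Omega}$ is near $1$ (resp.\ $-1$), the convection flux $\int_\Sigma\varphi_\delta\,\v_\delta\cdot\n\dH$ being controlled by the velocity bound and absorbed. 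Feeding this back into the identity above and applying Grönwall then yields all the bounds of Definition~\ref{defn:timedep}(a) uniformly in $\delta$; a further test of \eqref{MEQ_4} with $\psi_\delta'(\varphi_\delta)$, using $\psi_\delta''\ge-R_3$, then gives $\psi_\delta'(\varphi_\delta)$ bounded in $L^2(0,T;L^2)$ and hence $\varphi_\delta$ bounded in $L^2(0,T;H^2_n)$ by elliptic regularity.

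Finally I would pass to the limit $\delta\to0$. Aubin--Lions--Simon applied to $\varphi_\delta$ gives $\varphi_\delta\to\varphi$ strongly in $L^2(0,T;H^1)$ and a.e.\ in $\Omega_T$ (whence $|\varphi|\le1$ a.e., using the uniform bound on $\widehat\beta_\delta(\varphi_\delta)$), together with the weak limits $\mu_\delta\rightharpoonup\mu$ in $L^2(H^1)$, $\v_\delta\rightharpoonup\v$ in $L^2(\H^1)$, $p_\delta\rightharpoonup p$ in $L^2(L^2)$, and $\sigma_\delta\to\sigma$ with $0\le\sigma\le1$. All terms of \eqref{WFORM_1a}, \eqref{WFORM_1b}, \eqref{WFORM_1d}, including the convection terms $\div(\varphi_\delta\v_\delta)$, $\nabla\varphi_\delta\cdot\v_\delta$, pass to the limit by strong $\times$ weak convergence; for the double obstacle one passes to the limit in the variational inequality obtained by testing \eqref{WFORM_1c} with $\zeta-\varphi_\delta$, $\zeta\in\KK$, using monotonicity of $\widehat\beta_\delta'$, to recover \eqref{subdiff}, and for the logarithmic potential the $L^2$-bound on $\psi_\delta'(\varphi_\delta)$ with a.e.\ convergence identifies $\psi_{\log}'(\varphi)\in L^2$, forces $|\varphi|<1$ a.e., and assumption \eqref{ass:log:source:2} renders $b_\varphi(\varphi)\psi_{\log}'(\varphi)$ bounded so that $b_\varphi(\varphi_\delta)\sigma_\delta\psi_\delta'(\varphi_\delta)$ converges by dominated convergence, yielding \eqref{log:weak}. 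The inequality \eqref{log:energy} then follows by integrating the identity above over $(0,t)$ and letting $\delta\to0$, using weak lower semicontinuity of the dissipation terms and of $\varphi\mapsto\intO\psi_{\log}(\varphi)\dx$, strong convergence of $\varphi_\delta$, $\sigma_\delta$, and $\u_\delta\to\u=\DD(\Gamma_\v(\varphi,\sigma))$. The hard part throughout is the step controlling $\mu_\Omega$: it requires the energy identity (whose right-hand side contains the singular $\psi_\delta'(\varphi_\delta)$ coupled to the solution-dependent sources), the coercivity estimate (which needs $\varphi_{\delta,\Omega}$ away from $\pm1$), and the propagation of $\varphi_{\delta,\Omega}\in(-1,1)$ simultaneously, a tightly coupled bootstrap in which the assumptions \eqref{ASS_SOURCE}, \eqref{ass:log:source} and the refined estimate Proposition~\ref{prop:CFM} are essential, and where the new convection and frictionless-boundary contributions of the Brinkman velocity have to be absorbed.
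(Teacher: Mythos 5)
Your overall architecture (regularise the potential, extend the sources, invoke Lemma \ref{THM_WSOL_1}, run an energy estimate with the divergence corrector $\u_\delta=\DD(\Gamma_\v)$, control the mean of $\mu_\delta$ via confinement of the spatial mean of $\varphi_\delta$, then pass to the limit) is the paper's, but two steps as written do not work. First, the capillary/corrector term: decomposing $m_\delta=(m_\delta-m_{\delta,\Omega})+m_{\delta,\Omega}$ does not reduce the singular part to the source term alone. The mean-free piece is indeed harmless (Poincar\'e and $\nabla m_\delta=\nabla\mu_\delta+\chi\nabla\sigma_\delta$), but you are left with $m_{\delta,\Omega}\intO\nabla\varphi_\delta\cdot\u_\delta\dx$, and $m_{\delta,\Omega}=\mu_{\delta,\Omega}+\chi\sigma_{\delta,\Omega}$ is exactly the quantity that is \emph{not} yet controlled at the energy-estimate stage, multiplied by a factor of size $\norm{\nabla\varphi_\delta}_{\LP^2}$ with no sign and no smallness; so the Gronwall step does not close, and your later mean-value bootstrap becomes circular. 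The paper's mechanism here is essential and missing from your sketch: write $\intO\beta_\delta(\varphi_\delta)\nabla\varphi_\delta\cdot\u\dx=\intO\nabla(\hat\beta_\delta(\varphi_\delta))\cdot\u\dx$, integrate by parts to a boundary integral of $\hat\beta_\delta(\varphi_\delta)$ plus $\intO\hat\beta_\delta(\varphi_\delta)\Gamma_\v\dx$ (see \eqref{APRI_EQ_16}), estimate the trace using \eqref{PROP_PSIDELTA_3}, \eqref{PROP_PSIDELTA_5} and their logarithmic analogues \eqref{PROP_PSIDELTA_LOG_2}, \eqref{PROP_PSIDELTA_LOG_4}, and absorb the resulting $\intO\beta_\delta'(\varphi_\delta)|\nabla\varphi_\delta|^2\dx$ by adding $A$ times the identity obtained from testing the $\mu$-equation with $-\lap\varphi_\delta$, cf.\ \eqref{APRI_EQ_18}--\eqref{APRI_EQ_15}. (Also, the control of $\intO(\Gamma_\varphi-\varphi_\delta\Gamma_\v)\beta_\delta(\varphi_\delta)\dx$ rests on the sign structure, i.e.\ Lemma \ref{lem:log:term} and \eqref{source:doleq0}, not on Proposition \ref{prop:CFM}, whose role is in the mean-value estimate for $\mu_\delta$.)

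Second, the confinement of the mean: your claim that at fixed $\delta$ one has $\ddt(\varphi_\delta)_\Omega<0$ whenever $(\varphi_\delta)_\Omega$ is near $1$ is not justified. Only the mean is near $1$; $\varphi_\delta$ itself need not be, so $b_\varphi(\varphi_\delta)\sigma_\delta$ and the $f$-terms do not reduce to their values at $1$, and above all the convection contribution $\tfrac{1}{|\Omega|}\intO\nabla\varphi_\delta\cdot\v_\delta\dx$ is only bounded in $L^2(0,T)$, has no sign, and can be arbitrarily large at a given time; there is no dissipative term in the scalar equation for the mean into which it could be ``absorbed''. The paper circumvents precisely this by arguing at the \emph{limit} level: the $L^\infty(L^1)$ bound on $\hat\beta_\delta(\varphi_\delta)$ gives $|\varphi|\le1$ a.e., so $(\varphi(t_*))_\Omega=1$ forces $\varphi(t_*)\equiv1$ and hence $\nabla\varphi(t_*)=0$, which kills the convection term in \eqref{APRI_EQ_28} and leaves $f_\varphi(1)-f_\v(1)<0$, a contradiction; the strict interior bound is then transferred back to the approximations through the uniform convergence of $(\varphi_\delta)_\Omega$ in $C^0([0,T])$ obtained from \eqref{APRI_EQ_23}--\eqref{APRI_EQ_24} and Arzel\`a--Ascoli. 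Granting that confinement, your route to $\mu_{\delta,\Omega}$ (the classical estimate $\intO|\psi_\delta'(\varphi_\delta)|\dx\le C\intO\psi_\delta'(\varphi_\delta)(\varphi_\delta-(\varphi_\delta)_\Omega)\dx+C$ combined with testing \eqref{WFORM_1c_new} with $\varphi_\delta-(\varphi_\delta)_\Omega$) is a legitimate, somewhat simpler alternative to the paper's argument via Proposition \ref{prop:CFM} and the auxiliary Neumann problem \eqref{DO_f_delta}; but as proposed, the two gaps above are genuine and the tightly coupled bootstrap you invoke does not close.
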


\begin{ex}\label{ex:source}
We now give a biologically relevant example of the source terms $\Gamma_{\v}$ and $\Gamma_{\varphi}$ that satisfy \eqref{ASS_SOURCE}.  Following \cite[Sec.~2.4.1]{GLSS}, in a domain $\Omega$ occupied by both tumour cells and healthy cells, we denote by $\rho_1$ the actual mass density of the healthy cells per unit volume in $\Omega$ and by $\bar \rho_1$ the (constant) mass density of the healthy cells occupying the whole of $\Omega$.  Then, it follows that $\rho_1 \in [0, \bar \rho_1]$ and the volume fraction of the healthy cells can be defined as the ratio $u_1 = \frac{\rho_1}{\bar \rho_1} \in [0,1]$.  Let $\rho_2$, $\bar \rho_2$ and $u_2$ be the actual mass density of the tumour cells per unit volume in $\Omega$, the (constant) mass density of the tumour cells occupying the whole of $\Omega$, and the volume fraction of the tumour cells, respectively.   Assuming there are no external volume compartment aside from the tumour and healthy cells, we have $u_1 + u_2 = 1$.  Then, setting $\varphi := u_2 - u_1$, we consider the function 
\begin{equation*}
\Gamma(\varphi,\sigma)\coloneqq P (1-\varphi^2) \sigma-A \varphi \quad \text{ for } \varphi \in [-1,1]
\end{equation*}
where $P, A > 0$ are the constant proliferation and apoptosis rates, so that proliferation occurs only at the interface region $\{ -1 < \varphi < 1 \}$.  From the modelling, we have
\begin{equation*}
\Gamma_{\v}=\alpha\Gamma,\quad \Gamma_{\varphi}=\rho_S\Gamma,\quad \alpha\coloneqq \frac{1}{\bar{\rho}_2}-\frac{1}{\bar{\rho}_1},\quad \rho_S\coloneqq \frac{1}{\bar{\rho}_1}+\frac{1}{\bar{\rho}_2},
\end{equation*}
and so
\begin{align*}
b_{\v}(\varphi) = \alpha P (1-\varphi^2),\quad b_{\varphi}(\varphi)=\rho_S P (1-\varphi^2) ,\quad f_{\v}(\varphi)=-\alpha A\varphi,\quad f_{\varphi}(\varphi)=-\rho_SA\varphi,
\end{align*}
where $b_{\v}(\pm 1) = b_{\varphi}(\pm 1) = 0$ and
\begin{equation*}
f_{\varphi}(1)-f_{\v}(1)= -A\frac{2}{\bar{\rho}_1}<0,\quad f_{\varphi}(-1)+f_{\v}(-1)= A\frac{2}{\bar{\rho}_2}>0.
\end{equation*}
It is also clear that $b_\varphi(s) = \rho_s P(1-s^2)$ satisfies \eqref{ass:log:source} and \eqref{ass:log:source:2}.
\end{ex}

\subsection{The stationary problem}
The stationary problem comprises of the equations \eqref{MEQ_1}, \eqref{MEQ_2}, \eqref{MEQ_4}, \eqref{MEQ_5} posed in $\Omega$ and
\begin{align}\label{MEQ_stat}
\div (\varphi \v) = \lap \mu + \Gamma_\varphi(\varphi, \sigma) \quad\text{in } \Omega,
\end{align}
together with the boundary conditions \eqref{BC_1}-\eqref{BC_3} posed on $\Sigma$.
\begin{defn}\label{defn:stat}
A quintuple $(\varphi, \mu, \sigma, \v, p)$ is a weak solution to the stationary CHB system with the double obstacle potential $\psi_{\Do}$ if the following properties hold:
\begin{enumerate}
\item[$(\mathrm{d})$] the functions satisfy
\begin{align*}
\varphi \in H^2_n, \quad \mu \in H^1, \quad \sigma \in H^2, \quad \v \in \H^1, \quad p \in L^2.
\end{align*}
\item[$(\mathrm{e})$] equation \eqref{MEQ_1} holds a.e.~in $\Omega$, while \eqref{WFORM_1a}, \eqref{WFORM_1d} and
\begin{align}\label{Stat_Form}
0 = \int_\Omega \nabla \mu \cdot \nabla \zeta + (\nabla \varphi \cdot \v + \varphi \Gamma_\v(\varphi, \sigma) - \Gamma_\varphi(\varphi, \sigma) ) \zeta \dx
\end{align}
are satisfied for all $\boldsymbol{\Phi} \in \H^1$ and $\zeta \in H^1$.  
\item[$(\mathrm{f}1)$] \eqref{subdiff} holds along with $\varphi \in \KK = \{ f \in H^1 \, : \, \abs{f} \leq 1 \text{ a.e.~in } \Omega \}$.
\end{enumerate}
We say that $(\varphi, \mu, \sigma, \v, p)$ is a weak solution to the stationary CHB system with the logarithmic potential $\psi_{\log}$ if properties $(\mathrm{d})$ and $(\mathrm{e})$ are satisfied and 
\begin{enumerate}
\item[$(\mathrm{f}2)$] \eqref{log:weak} holds along with $\abs{\varphi(x)} < 1$ for a.e.~$x \in \Omega$.
\end{enumerate}
\end{defn}

\begin{thm}\label{thm:stat}
Under \eqref{ass:dom}-\eqref{ass:visc} and \eqref{ass:do:source}, there exists a weak solution $(\varphi, \mu, \sigma, \v, p)$ to the stationary CHB model with double obstacle potential $\psi_{\Do}$ in the sense of Definition \ref{defn:stat}. If, in addition, \eqref{ass:log:source} and \eqref{ass:log:source:2} hold, then there exists a weak solution $(\varphi, \mu, \sigma, \v, p)$ to the stationary CHB model with logarithmic potential $\psi_{\log}$ in the sense of Definition \ref{defn:stat}. Furthermore, in both cases, $0 \leq \sigma \leq 1$ a.e.~in ~$\Omega_T$ and the following regularity properties hold
\begin{equation*}
\mu\in H_n^2,\quad \v\in \W^{2,q},\quad p\in W^{1,q},
\end{equation*}
for $q < \infty$ if $d = 2$ and $q = 6$ if $d = 3$.  In particular, the quintuple $(\varphi, \mu, \sigma, \v, p)$ is a strong stationary solution.
\end{thm}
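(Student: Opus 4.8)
\textbf{Proof proposal for Theorem \ref{thm:stat}.}

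The plan is to obtain the stationary solution as a limit of time-independent approximate problems constructed in parallel with the time-dependent theory, rather than by taking $t \to \infty$ in Theorem \ref{thm:timedep}. First I would set up an approximation of the singular potential: for $\psi_{\Do}$ use the Moreau--Yosida/polynomial regularisation $\psi_\delta$ (and for $\psi_{\log}$ a truncation of $\psi_{\log}'$ at levels $\pm 1/\delta$), each satisfying the hypotheses \eqref{psi1psi2}, so that for fixed $\delta$ the stationary CHB system with smooth potential $\psi_\delta$ is solvable. To produce that fixed-$\delta$ solution I would exploit the structure noted in the introduction: given $\varphi$, the Brinkman subsystem \eqref{MEQ_1}, \eqref{MEQ_2}, \eqref{BC_3} yields $(\v_\varphi, p_\varphi)$ via Lemma \ref{lem:Brink} (with $c = \varphi$, after first lifting $\varphi$ to the required $W^{1,r}$-regularity, which will come from elliptic regularity once $\mu$ is controlled), and the nutrient subsystem \eqref{MEQ_5}, \eqref{BC_2} yields $\sigma_\varphi \in H^2$ with $0 \le \sigma_\varphi \le 1$ by the weak maximum principle. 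Substituting these into \eqref{MEQ_stat}, \eqref{MEQ_4} reduces the problem to the fourth-order elliptic equation for $\varphi$ displayed in the introduction, which I would solve by a Leray--Schauder or pseudo-monotone-operator argument (or, as in \cite{GLS}, a further internal regularisation plus fixed point), obtaining a quintuple $(\varphi_\delta, \mu_\delta, \sigma_\delta, \v_\delta, p_\delta)$ solving the smooth stationary system.

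The heart of the matter is the $\delta$-uniform a priori estimates. Testing the stationary Cahn--Hilliard equations with the natural multipliers (namely $\mu_\delta$ in \eqref{Stat_Form} and $\delt$-free analogues of the energy identity, i.e.\ testing \eqref{MEQ_4} with $-\lap\varphi_\delta + \psi_\delta'(\varphi_\delta)$ and \eqref{MEQ_stat} appropriately), together with testing the Brinkman weak form \eqref{WFORM_1a} with $\bPhi = \v_\delta$ and with the divergence-lifting $\u = \DD(\Gamma_\v(\varphi_\delta,\sigma_\delta))$, should yield control of $\norm{\nabla\mu_\delta}_{L^2}$, $\norm{\v_\delta}_{\H^1}$, $\norm{\psi_\delta'(\varphi_\delta)}_{L^1}$, $\norm{\nabla\varphi_\delta}_{L^2}$ and eventually $\norm{\varphi_\delta}_{H^2}$ via elliptic regularity, with the boundary term $\intS K(\sigma_\delta-1)\zeta\dH$ and the chemotaxis coupling $\chi\nabla\mu_\delta\cdot\nabla\sigma_\delta$ absorbed using $0\le\sigma_\delta\le 1$ and $\norm{\sigma_\delta}_{H^2}\le C$. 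The convection term $\div(\varphi_\delta\v_\delta)$ is controlled because $\norm{\v_\delta}_{\LP^6}$ is bounded (Sobolev, $d\le3$) and $\varphi_\delta$ is bounded in $H^1 \hookrightarrow L^6$, so $\varphi_\delta\v_\delta$ is bounded in $\LP^{3/2}$ and in fact in $\LP^2_{\div}$ once the $H^2$-estimate on $\varphi_\delta$ is in place; this is precisely the "controlling the convection term with the help of the unique solvability of the Brinkman system" flagged as the novelty. The genuinely delicate step is bounding the mean value $(\mu_\delta)_\Omega$: since there is no mass conservation, this requires the refined estimate in the spirit of Proposition \ref{prop:CFM} on the approximations of the singular part, combined with the sign conditions \eqref{ASS_SOURCE} on $b_\v, b_\varphi, f_\v, f_\varphi$ at $\pm1$, to first show $\abs{(\varphi_\delta)_\Omega}$ stays bounded away from $1$ uniformly in $\delta$, and then to convert this into an $L^1$-bound on $\psi_\delta'(\varphi_\delta)$ and hence on $(\mu_\delta)_\Omega$ through a duality/compactness argument — for the logarithmic case one additionally uses \eqref{ass:log:source}--\eqref{ass:log:source:2} to handle $b_\varphi(\varphi_\delta)\sigma_\delta$ against $\psi_{\log}'(\varphi_\delta)$ near $\pm1$. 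I expect this mean-value bound to be the main obstacle; the rest follows the pattern of \cite{GLS}.

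With the uniform bounds secured, I would pass to the limit $\delta \to 0$: extract weakly/weakly-$*$ convergent subsequences of $(\varphi_\delta,\mu_\delta,\sigma_\delta,\v_\delta,p_\delta)$ in the spaces of Definition \ref{defn:stat}, use compact embeddings ($H^2 \hookrightarrow\hookrightarrow H^1$, $H^1\hookrightarrow\hookrightarrow L^p$) to upgrade to strong convergence of $\varphi_\delta$ in $H^1$ and a.e., and pass to the limit in the linear terms and — via strong convergence of $\varphi_\delta$ and the continuity of $b_\v,b_\varphi,f_\v,f_\varphi,\eta,\lambda,h$ — in the nonlinear terms. For the double obstacle case the variational inequality \eqref{subdiff} is recovered from the approximate Euler--Lagrange equation by the standard Minty-type/lower-semicontinuity argument together with $\abs{\varphi_\delta}\le1$ passing to the limit; for the logarithmic case one shows $\psi_{\log}'(\varphi_\delta)$ is bounded in $L^1$ (indeed in a better space after the refined estimate), extracts a limit, and identifies it with $\psi_{\log}'(\varphi)$ using a.e.\ convergence and Fatou, which also forces $\abs{\varphi}<1$ a.e. Finally, once $\mu \in H^1$ and $\varphi\in H^2$ are known, bootstrapping gives $\varphi \in W^{1,r}$ for some $r>d$ (via $H^2 \hookrightarrow W^{1,r}$ in $d\le 3$... more precisely $H^2\hookrightarrow C^0$ so $\varphi\in W^{1,q}$ for all $q$), so Lemma \ref{lem:Brink} applies with $c=\varphi$ to yield $\v\in\W^{2,q}$, $p\in W^{1,q}$ for the stated $q$, and elliptic regularity for \eqref{MEQ_5}, \eqref{BC_2} and for the $\mu$-equation upgrades $\sigma$ and gives $\mu\in H^2_n$, establishing that the weak solution is in fact a strong stationary solution.
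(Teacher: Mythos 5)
Your overall strategy (regularise the potential, solve a fixed-$\delta$ stationary problem by pseudomonotone operator theory, derive $\delta$-uniform bounds including a bound on the mean of $\mu_\delta$, pass to the limit, then bootstrap $\v\in\W^{2,q}$, $p\in W^{1,q}$, $\mu\in H^2_n$ via Lemma \ref{lem:Brink} and elliptic regularity) is the same as the paper's, and your final regularity step is essentially correct. The genuine gap is in the middle: you assert that testing with the ``natural multipliers'' should yield the uniform bounds, but in the stationary setting this is exactly where the argument breaks down if the approximate problem is posed naively, because there is no time-derivative term to absorb anything and, crucially, $|\varphi_\delta|\le 1$ is \emph{not} known at the approximate level. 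Concretely: (i) the convection term, after integration by parts using $\div(\v_\delta)=\Gamma_\v$ and the stress-free boundary condition, produces the boundary contribution $\tfrac12\intS |\varphi_\delta|^2\,\v_\delta\cdot\n\dH$, which cannot be bounded uniformly in $\delta$ before the constraint is established; (ii) the capillary forcing $(\psi_\delta'(\varphi_\delta)-\lap\varphi_\delta)\nabla\varphi_\delta$ in the Brinkman subsystem contains $\beta_\delta(\varphi_\delta)\nabla\varphi_\delta$, which blows up as $\delta\to0$ unless one already controls $\beta_\delta(\varphi_\delta)$ in $L^2$ — but that control is precisely what the mean-value-of-$\mu_\delta$ argument is supposed to deliver later, so your scheme is circular; and (iii) the operator you propose to apply Leray--Schauder/pseudomonotonicity to is not coercive on $H^2_n$, since without mass conservation the negative quadratic contributions ($-\Theta_c\norm{\nabla\varphi_\delta}^2$ and the source terms) have nothing to absorb them in $L^2$. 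The paper resolves all three points by modifying the approximate problem itself: the convection and capillary terms are written with the truncation $\Td(\varphi_\delta)$ (so that $|\Td|\le1$ kills the boundary term, and $\Td'$ vanishes where $\beta_\delta$ is large, giving the key estimate \eqref{stat:coer:psi} via the boundedness of $\beta_{\log}/(\beta_{\log}')^{1/2}$), an extra $\sqrt{\delta}\beta_\delta(\varphi_\delta)$ term is added, and a norm-dependent zeroth-order term $F(\varphi_\delta)\varphi_\delta$ restores coercivity; these devices are then removed a posteriori once $|\varphi|\le1$ and $\norm{\varphi_\delta}_{L^2}^2\le 2|\Omega|$ are recovered in the limit. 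Your proposal contains none of this, and without it the a priori estimate step you describe does not go through.

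A second, smaller inaccuracy: you say one should show $|(\varphi_\delta)_\Omega|$ stays away from $1$ ``uniformly in $\delta$'' by the time-dependent mechanism. In the stationary case there is no evolution equation for the mean and no continuity-in-time argument; the paper instead passes $\zeta=1$ in the limiting equation to obtain $0=\intO \gamma(\varphi,\sigma)+\v\cdot\nabla\varphi\dx$ and uses the sign conditions \eqref{ASS_SOURCE} to exclude $\varphi\equiv\pm1$, so that $\varphi_\Omega\in(-1,1)$ holds for the \emph{limit}; the uniform bound on $(\mu_\delta)_\Omega$ is then obtained by comparison with this limit mean, as in Section \ref{sec:mu:mean}. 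Your description would need to be rerouted along these lines.
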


\subsection{Darcy's law}
By setting the viscosities $\eta(\cdot)$ and $\lambda(\cdot)$ to zero, the CHB model \eqref{MEQ}-\eqref{BIC} reduces to a Cahn--Hilliard--Darcy  (CHD) model consisting of \eqref{MEQ_1}, \eqref{MEQ_3}-\eqref{MEQ_5} and the Darcy law
\begin{align}\label{MEQ_D}
\v = -\frac{1}{\nu} \Big (\nabla p - (\mu + \chi \sigma) \nabla \varphi \Big ) \quad\text{in } \Omega_T,
\end{align}
furnished with the initial-boundary conditions \eqref{BC_1}-\eqref{BC_2}, \eqref{IC} together with the Dirichlet boundary condition
\begin{align}\label{BC_D}
p = 0 \quad\text{on } \Sigma \times (0,T).
\end{align}
We begin with a notion of weak solutions for the CHD model with singular potentials.
\begin{defn}\label{defn:Darcy}
A quintuple $(\varphi,\mu,\sigma,\v,p)$ is a weak solution to the CHD system \eqref{MEQ_1}, \eqref{MEQ_3}-\eqref{MEQ_5},  \eqref{BC_1}-\eqref{BC_2}, \eqref{IC}, \eqref{MEQ_D}, \eqref{BC_D} with the double obstacle potential $\psi_{\Do}$ if property $(\mathrm{c}1)$ from Definition \ref{defn:timedep} holds along with:
\begin{enumerate}
\item[$(\mathrm{g})$] the functions satisfy
\begin{align*}
& \varphi \in W^{1,\frac{8}{5}}(0,T;(H^1)^*) \cap  L^{\infty}(0,T;H^1) \cap L^2(0,T;H^2_n), \quad \mu \in L^2(0,T;H^1), \\
& \sigma \in L^{\infty}(0,T;H^2),  \quad \v \in L^2(0,T;\LP^2), \quad p \in L^2(0,T;L^2) \cap L^{\frac{8}{5}}(0,T;H^1_0)
\end{align*}
with $\varphi(0) = \varphi_0$ a.e.~in $\Omega$.
\item[$(\mathrm{h})$] for a.e.~$t \in (0,T)$ and for all $\boldsymbol{\Phi} \in \H^1$, $\chi \in H^1_0$ and $\zeta \in H^1$, \eqref{WFORM_1b} and \eqref{WFORM_1d} are satisfied along with
\begin{subequations}
\begin{alignat}{2}
0 & = \int_\Omega (\nu \v - (\mu + \chi \sigma) \nabla \varphi) \cdot \boldsymbol{\Phi} - p \div(\boldsymbol{\Phi}) \dx, \label{WFORM_D_1a} \\
0 & = \int_\Omega \frac{1}{\nu} (\nabla p - (\mu + \chi \sigma) \nabla \varphi) \cdot \nabla \chi - \Gamma_{\v}(\varphi, \sigma) \chi \dx \label{WFORM_D_1b}.
\end{alignat}
\end{subequations}
\end{enumerate}
We say that $(\varphi, \mu, \sigma, \v, p)$ is a weak solution to the CHD system with the logarithmic potential $\psi_{\log}$ if property $(\mathrm{c}2)$ in Definition \ref{defn:timedep} holds along with properties $(\mathrm{g})$ and $(\mathrm{h})$.
\end{defn}

\begin{rem}
The variational equality \eqref{WFORM_D_1a} comes naturally from \eqref{WFORM_1a} when we neglect the viscosities $\eta(\varphi)$ and $\lambda(\varphi)$. Meanwhile, the variational equality \eqref{WFORM_D_1b} arises from the weak formulation of the elliptic problem obtained from taking the divergence of Darcy's law \eqref{MEQ_D} in conjunction with the equation \eqref{MEQ_1} and the boundary condition \eqref{BC_D}.
\end{rem}

\begin{thm}\label{thm:Darcy}
Under \eqref{ass:dom}-\eqref{ass:visc}, \eqref{ass:do:source} and \eqref{ass:do:ini}, there exists a weak solution $(\varphi, \mu, \sigma, \v, p)$ to the CHD model with double obstacle potential $\psi_{\Do}$ in the sense of Definition \ref{defn:Darcy}. If, in addition, \eqref{ass:log:source}-\eqref{ass:log:ini} hold, then there exists a weak solution $(\varphi, \mu, \sigma, \v, p)$ to the CHD model with logarithmic potential $\psi_{\log}$ in the sense of Definition \ref{defn:Darcy}, which satisfies, for a.e.~$t \in (0,T)$, the inequality \eqref{log:energy} with $\eta(\varphi) \equiv 0$. Furthermore, in both cases, $0 \leq \sigma \leq 1$ a.e.~in $\Omega_T$.
\end{thm}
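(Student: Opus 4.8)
The plan is to realise the CHD system as a vanishing-viscosity limit of the CHB system, following the idea in \cite{EG2}. For $\alpha \in (0,1]$ I would take $\eta_\alpha(\cdot) \equiv \alpha$ and $\lambda_\alpha(\cdot) \equiv 0$, which satisfy \eqref{ass:visc}, and apply Theorem \ref{thm:timedep} to obtain, for each $\alpha$, a weak solution $(\varphi_\alpha, \mu_\alpha, \sigma_\alpha, \v_\alpha, p_\alpha)$ to the CHB system with viscosities $(\eta_\alpha, \lambda_\alpha)$ and with the double obstacle potential $\psi_{\Do}$ under \eqref{ass:dom}--\eqref{ass:visc}, \eqref{ass:do:source}, \eqref{ass:do:ini} (respectively with the logarithmic potential $\psi_{\log}$ if in addition \eqref{ass:log:source}--\eqref{ass:log:ini} hold), together with $0 \le \sigma_\alpha \le 1$ a.e.~and, in the logarithmic case, the energy inequality \eqref{log:energy}. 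One then derives $\alpha$-uniform estimates, extracts a limit as $\alpha \downarrow 0$, and checks that the viscous stress degenerates, so that the momentum balance \eqref{MEQ_2} collapses to Darcy's law \eqref{MEQ_D} and the frictionless condition $\T \n = \0$ collapses to $p = 0$ on $\Sigma$.

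For the uniform bounds I would run the computations underlying \eqref{log:energy} (and its analogue for $\psi_{\Do}$) with $\eta_\alpha \equiv \alpha$. Since $\u = \DD(\Gamma_\v(\varphi_\alpha, \sigma_\alpha))$ is bounded in $L^\infty(0,T;\W^{1,q})$ for every finite $q$ --- thanks to $0 \le \sigma_\alpha \le 1$ and the boundedness of $b_\v, f_\v$ on $[-1,1]$ --- the terms on the right-hand side of \eqref{log:energy} involving $\u$ carry a factor $\alpha$ or $\alpha^{1/2}$, the latter being absorbed by the viscous dissipation, while all remaining terms are handled exactly as in the proof of Theorem \ref{thm:timedep}. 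This gives, uniformly in $\alpha$: $\varphi_\alpha$ bounded in $L^\infty(H^1) \cap L^2(H^2_n)$ (the $H^2_n$ part through the elliptic theory for \eqref{WFORM_1c}, respectively the obstacle problem \eqref{subdiff}), $\mu_\alpha$ in $L^2(H^1)$, $\sigma_\alpha$ in $L^\infty(H^2)$ with $0 \le \sigma_\alpha \le 1$, $\alpha^{1/2} \D \v_\alpha$ in $L^2(\LP^2)$, and $\v_\alpha$ in $L^2(\LP^2)$. For the pressure I would test \eqref{WFORM_1a} with $\bPhi = \DD(g)$, $g \in L^2$, and use that $\div \colon \H^1 \to L^2$ is onto with a bounded right inverse (Lemma \ref{LEM_DIVEQU}) to bound $p_\alpha$ in $L^2(L^2)$, the forcing $(\mu_\alpha + \chi \sigma_\alpha)\grad \varphi_\alpha$ being bounded in $L^2(\LP^{6/5})$; moreover, taking the divergence of \eqref{MEQ_2} and using $\div \div(\D \v_\alpha) = \lap(\div \v_\alpha) = \lap \Gamma_\v(\varphi_\alpha, \sigma_\alpha)$, one finds that
\[
\lap p_\alpha = \div\bigl((\mu_\alpha + \chi \sigma_\alpha)\grad \varphi_\alpha\bigr) - \nu \Gamma_\v(\varphi_\alpha, \sigma_\alpha) + 2\alpha \lap \Gamma_\v(\varphi_\alpha, \sigma_\alpha),
\]
where the last term is $O(\alpha)$ in $L^2((H^1)^*)$, so that $p_\alpha$ and $\grad p_\alpha$ obey the bounds recorded in Definition \ref{defn:Darcy}; finally \eqref{WFORM_1b}, together with $\div(\varphi_\alpha \v_\alpha) = \grad \varphi_\alpha \cdot \v_\alpha + \varphi_\alpha \Gamma_\v(\varphi_\alpha, \sigma_\alpha)$, bounds $\del_t \varphi_\alpha$ in $L^{8/5}(0,T;(H^1)^*)$.

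Passing to the limit, along a subsequence $\varphi_\alpha \rightharpoonup \varphi$ weakly-$*$ in $L^\infty(H^1)$ and weakly in $L^2(H^2)$, and by the Aubin--Lions lemma also $\varphi_\alpha \to \varphi$ strongly in $C([0,T];L^2) \cap L^2(H^1)$ and a.e.~in $\Omega_T$; interpolating the strong $L^2(\LP^2)$-convergence of $\grad \varphi_\alpha$ against its $L^2(\LP^6)$-bound gives $\grad \varphi_\alpha \to \grad \varphi$ strongly in $L^2(\LP^3)$, and the elliptic problem \eqref{WFORM_1d} upgrades this to $\sigma_\alpha \to \sigma$ strongly in $L^2(H^2)$; moreover $\mu_\alpha \rightharpoonup \mu$ in $L^2(H^1)$, $\v_\alpha \rightharpoonup \v$ in $L^2(\LP^2)$, and $p_\alpha \rightharpoonup p$ in the spaces of Definition \ref{defn:Darcy}. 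Because $\alpha \D \v_\alpha = \alpha^{1/2}(\alpha^{1/2}\D \v_\alpha) \to \0$ in $L^2(\LP^2)$, the viscous terms in \eqref{WFORM_1a} vanish and \eqref{WFORM_1a} passes to \eqref{WFORM_D_1a}, while \eqref{MEQ_1} passes to $\div \v = \Gamma_\v(\varphi, \sigma)$ thanks to the strong convergence of the right-hand side; combining these, Darcy's law \eqref{MEQ_D} is recovered pointwise, and then \eqref{WFORM_D_1a} forces $\intS p\,(\bPhi \cdot \n)\dH = 0$ for every $\bPhi \in \H^1$, so $p$ has zero trace and lies in $H^1_0$ for a.e.~$t$; testing $\div \v = \Gamma_\v(\varphi, \sigma)$ against $\chi \in H^1_0$ and inserting \eqref{MEQ_D} yields \eqref{WFORM_D_1b}. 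Equations \eqref{WFORM_1b} and \eqref{WFORM_1d} pass to the limit using the strong convergences of $\varphi_\alpha$ and $\sigma_\alpha$ against the weak convergences of $\mu_\alpha$ and $\v_\alpha$ (so that, e.g., $\grad \varphi_\alpha \cdot \v_\alpha$ and $(\mu_\alpha + \chi \sigma_\alpha)\grad \varphi_\alpha$ converge weakly to the expected limits), while the singular-potential parts --- the variational inequality \eqref{subdiff} for $\psi_{\Do}$ (weak closedness of $\KK$, strong $L^2$-convergence of $\varphi_\alpha$, and weak lower semicontinuity of the Dirichlet energy) and \eqref{log:weak} for $\psi_{\log}$ ($\abs{\varphi_\alpha} < 1$, a.e.~convergence of $\varphi_\alpha$, and the $\alpha$-uniform control of $\psi_{\log}'(\varphi_\alpha)$ coming from the key estimates of Theorem \ref{thm:timedep}, in particular the refined Proposition \ref{prop:CFM}, which forces $\abs{\varphi} < 1$ a.e.~and equi-integrability) --- are treated exactly as in the proof of Theorem \ref{thm:timedep}, the viscosities playing no role in the Cahn--Hilliard block; finally the energy inequality \eqref{log:energy} with $\eta(\varphi) \equiv 0$ follows by weak lower semicontinuity.

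The main obstacle, relative to the already-established Theorem \ref{thm:timedep}, is the $\alpha$-uniform control of $(\v_\alpha, p_\alpha)$ together with the recovery of Darcy's law: the momentum balance \eqref{MEQ_2} contains the term $\div(2\alpha \D \v_\alpha)$, which is \emph{not} bounded uniformly in $\alpha$ by a direct elliptic estimate, so the uniform pressure bound has to be extracted through the inf-sup pairing with $\DD(\cdot)$, while the $\alpha$-dependence in the elliptic equation for $p_\alpha$ is confined to the harmless $O(\alpha)$ term via $\div \div(\D \v_\alpha) = \lap \Gamma_\v(\varphi_\alpha, \sigma_\alpha)$; one must then check carefully that the limiting pressure has zero trace, so that it belongs to the required space $L^{8/5}(0,T;H^1_0)$ and so that the weak formulation \eqref{WFORM_D_1b} --- in which $p$ enters through $\grad p$ rather than through $\div(\bPhi)$ --- is genuinely satisfied. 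A secondary point is securing enough strong compactness ($\varphi_\alpha$ in $L^2(H^1)$, $\sigma_\alpha$ in $L^2(H^2)$) to pass to the limit in the quadratic terms $(\mu + \chi \sigma)\grad \varphi$, $\varphi \v$ and $\grad \varphi \cdot \v$, since $\mu_\alpha$ and $\v_\alpha$ converge only weakly.
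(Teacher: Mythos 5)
Your high-level strategy (realising the CHD system as a vanishing-viscosity limit of CHB) is the same as the paper's, but the way you set up the limit has a genuine gap. The paper does \emph{not} first invoke Theorem \ref{thm:timedep} at fixed viscosity and then send the viscosity to zero: it returns to the regularised problem of Lemma \ref{THM_WSOL_1} with $\eta(\cdot)=\lambda(\cdot)=\delta$, so that the potential regularisation and the viscosity vanish together, and all estimates are derived at the level where $\psi_\delta\in C^2$ and \eqref{WFORM_1c_new} holds as an identity. Your plan instead needs $\alpha$-uniform estimates for the singular-potential solutions delivered by Theorem \ref{thm:timedep}, and for the double obstacle potential these are not obtainable as you describe: those solutions satisfy only the variational inequality \eqref{subdiff}, no energy inequality is part of the statement in that case, and ``running the computations underlying \eqref{log:energy}'' (testing the chemical-potential relation with $\mu+\chi\sigma$, with $-\lap\varphi$, with $\beta(\varphi)$, and the mean-value machinery of Section \ref{sec:mu:mean}) is not justified at the singular level, where $\psi_{\Do}'$ does not exist and $\mu$ is characterised only through a subdifferential inclusion. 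So either you re-prove a quantitative version of Theorem \ref{thm:timedep} with constants uniform in $\eta_0=\alpha$ --- which effectively reproduces the paper's combined $\delta$-limit --- or your first stage does not provide the bounds your second stage needs.

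Even granting such estimates, your claim that the control of the mean of $\mu_\alpha$ is ``exactly as in the proof of Theorem \ref{thm:timedep}'' fails in the Darcy scaling. With $\eta=\alpha\to 0$ one only retains $\v_\alpha$ bounded in $L^2(0,T;\LP^2)$, not in $L^2(0,T;\H^1)$, so $\del_t\varphi_\alpha$ and $\div(\varphi_\alpha\v_\alpha)$ are uniformly bounded only with exponent $8/5$ in time; feeding them separately into the right-hand side of \eqref{APRI_EQ_31} yields $(\mu_\alpha)_\Omega$ bounded only in $L^{8/5}(0,T)$, hence $\mu\in L^{8/5}(0,T;H^1)$, which is weaker than the $L^2(0,T;H^1)$ required by Definition \ref{defn:Darcy} and also undermines your pressure bound, since the estimate of $(\mu_\alpha+\chi\sigma_\alpha)\nabla\varphi_\alpha$ in $L^2(0,T;\LP^{6/5})$ presupposes $\mu_\alpha$ uniformly bounded in $L^2(0,T;L^3)$. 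The paper's proof hinges precisely on the observation that the \emph{combination} $\del_t\varphi_\delta+\div(\varphi_\delta\v_\delta)$, being equal to $\lap\mu_\delta+\Gamma_\varphi$ in the weak sense, is uniformly bounded in $L^2(0,T;(H^1)^*)$, see \eqref{APRI_D_EQ_4}, and it is this combination, not the two terms separately, that is paired with $f_\delta$ in \eqref{APRI_D_EQ_6} to recover an $L^2(0,T)$ bound on the mean of the chemical potential. Your proposal omits this step, and without it the limit does not lie in the spaces of Definition \ref{defn:Darcy}. By contrast, your treatment of the velocity--pressure block (absorbing $\sqrt{\alpha}\,\D\v_\alpha$, the inf-sup pairing with $\DD(\cdot)$, recovery of \eqref{MEQ_D}, \eqref{WFORM_D_1a}--\eqref{WFORM_D_1b} and of $p=0$ on $\Sigma$) is in the spirit of what the paper delegates to \cite{EG2}, though the zero-trace argument for $p$ should be made after, not before, establishing $\nabla p\in \LP^{6/5}$-type regularity.
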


\section{Proof of Theorem \ref{thm:timedep} -- Time dependent solutions}\label{sec:time}
The standard procedure is to approximate the singular potentials with a sequence of regular potentials, employ Lemma \ref{THM_WSOL_1} to obtain approximate solutions, derive uniform estimates and pass to the limit. 

\subsection{Approximation potentials and their properties}
\subsubsection{Double obstacle potential}
We point out that in order to use Lemma \ref{THM_WSOL_1} the approximate potential should at least belong to $C^2(\R)$.  Fix $\delta > 0$, which serves as the regularisation parameter, and we define
\begin{equation}
\label{DEF_APPROX_POT_1} \quad \hat \beta_{\Do,\delta}(r) =\begin{cases}
\frac{1}{2\delta}\left(r-\left(1+\frac{\delta}{2}\right)\right)^2+\frac{\delta}{24}&\text{ for }r\geq 1+\delta,\\
\frac{1}{6\delta^2}(r-1)^3&\text{ for }r\in (1,1+\delta),\\
0&\text{ for }|r|\leq 1,\\
-\frac{1}{6\delta^2}(r+1)^3 &\text{ for }r\in (-1-\delta,-1),\\
\frac{1}{2\delta}\left(r+\left(1+\frac{\delta}{2}\right)\right)^2+\frac{\delta}{24}&\text{ for }r\leq -1-\delta.
\end{cases}
\end{equation}
Formally, it is easy to see that $\hat \beta_{\Do,\delta}(r) \to \mathbb{I}_{[-1,1]}(r)$ as $\delta \to 0$, and so 
\begin{align}\label{DO_POT}
\psi_{\Do,\delta}(r) := \hat \beta_{\Do,\delta}(r) + \frac{1}{2}(1-r^2)
\end{align}
will serve as our approximation for the double obstacle potential.  Let $\beta_{\Do,\delta}(r) =  \hat \beta_{\Do,\delta}'(r) = (r + \psi_{\Do,\delta}'(r)) \in C^1(\R)$ denote the derivative of the convex part $\hat \beta_{\Do,\delta}$, which has the form
\begin{equation}
\label{DEF_APPROX_POT_2}\beta_{\Do,\delta}(r)=\begin{cases}
\frac{1}{\delta} \left (r-\left(1+\frac{\delta}{2}\right ) \right )&\text{ for }r\geq 1+\delta,\\
\frac{1}{2\delta^2}(r-1)^2&\text{ for }r\in (1,1+\delta),\\
0&\text{ for }|r|\leq 1,\\
-\frac{1}{2\delta^2}(r+1)^2&\text{ for }r\in (-1-\delta,-1),\\
\frac{1}{\delta} \left (r+\left(1+\frac{\delta}{2}\right) \right )&\text{ for }r\leq -1-\delta.
\end{cases}
\end{equation}
Then, it is clear that $\beta_{\Do,\delta}$ is Lipschitz continuous with $0 \leq \beta_{\Do,\delta}'(r) \leq \frac{1}{\delta}$ for all $r \in \R$. 
\begin{prop}\label{prop:DO:prop}
Let $\hat \beta_{\Do,\delta}$ and $\psi_{\Do,\delta}$ be defined as above.  Then, there exist positive constants $C_0$ and $C_1$ such that for all $r \in \R$ and for all $\delta \in (0,1/4)$,
\begin{subequations}
\begin{alignat}{3}
\label{PROP_PSIDELTA_1}\psi_{\Do,\delta}(r) & \geq C_0| r |^2 - C_1, \\
\label{PROP_PSIDELTA_3} \delta (\beta_{\Do,\delta}(r))^2  & \leq 2 \hat \beta_{\Do,\delta}(r) \leq \delta(\beta_{\Do,\delta}(r))^2 + 1, \\ \label{PROP_PSIDELTA_5}\delta( \beta_{\Do,\delta}'(r))^2 & \leq \beta_{\Do,\delta}'(r). 
\end{alignat}
\end{subequations}
\end{prop}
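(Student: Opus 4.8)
All three bounds are local, pointwise-in-$r$ statements, so the plan is simply to substitute the explicit piecewise formulas \eqref{DEF_APPROX_POT_1}--\eqref{DEF_APPROX_POT_2} and check each inequality on each branch. A first reduction: it is immediate from \eqref{DEF_APPROX_POT_1} that $\hat\beta_{\Do,\delta}$ is even, hence $\psi_{\Do,\delta}$ is even, $\beta_{\Do,\delta}=\hat\beta_{\Do,\delta}'$ is odd, and both $(\beta_{\Do,\delta})^2$ and $\beta_{\Do,\delta}'$ are even. Every inequality in the proposition is therefore invariant under $r\mapsto -r$, so it suffices to treat $r\ge 0$, i.e.\ the three regimes $r\le 1$, $1<r<1+\delta$ and $r\ge 1+\delta$. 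The bound \eqref{PROP_PSIDELTA_5} requires no case analysis: the text already records $0\le\beta_{\Do,\delta}'(r)\le\frac1\delta$, so multiplying $\delta\beta_{\Do,\delta}'(r)\le 1$ by $\beta_{\Do,\delta}'(r)\ge 0$ gives $\delta(\beta_{\Do,\delta}'(r))^2\le\beta_{\Do,\delta}'(r)$ at once.

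\textbf{The two-sided bound \eqref{PROP_PSIDELTA_3}.} On $r\le 1$ both $\hat\beta_{\Do,\delta}$ and $\beta_{\Do,\delta}$ vanish and the chain $0\le 0\le 1$ is trivial. On $1<r<1+\delta$ one reads off $2\hat\beta_{\Do,\delta}(r)=\frac{(r-1)^3}{3\delta^2}$ and $\delta(\beta_{\Do,\delta}(r))^2=\frac{(r-1)^4}{4\delta^3}$, so the left inequality reduces to $r-1\le\frac{4\delta}{3}$, which holds since $r-1<\delta$, while the right inequality follows from $2\hat\beta_{\Do,\delta}(r)<\frac{\delta}{3}<1$. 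On $r\ge 1+\delta$, setting $s:=r-1-\frac\delta2\ge\frac\delta2>0$ one computes $\delta(\beta_{\Do,\delta}(r))^2=\frac{s^2}{\delta}$ and $2\hat\beta_{\Do,\delta}(r)=\frac{s^2}{\delta}+\frac{\delta}{12}$, so both bounds hold because $0\le\frac{\delta}{12}\le 1$ for $\delta\in(0,\tfrac14)$.

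\textbf{The coercivity bound \eqref{PROP_PSIDELTA_1}.} Since $\hat\beta_{\Do,\delta}\ge 0$ everywhere, $\psi_{\Do,\delta}(r)\ge\hat\beta_{\Do,\delta}(r)-\frac12 r^2$. For $|r|\le 1+\delta\le\frac54$ the quantity $r^2$ is bounded, so the desired inequality holds with a fixed constant. For $|r|\ge 1+\delta$, the assumption $\delta<\frac14$ gives $\frac1{2\delta}\ge 2$, hence $\hat\beta_{\Do,\delta}(r)\ge 2\big(|r|-1-\tfrac\delta2\big)^2$; completing the square in $|r|$ and using $1+\tfrac\delta2\le\tfrac98$, the right-hand side dominates $\tfrac34 r^2$ up to a universal constant. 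Combining the two regimes yields \eqref{PROP_PSIDELTA_1} with $C_0,C_1$ independent of $\delta$.

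\textbf{Main obstacle.} The only delicate point is keeping $C_0$ and $C_1$ in \eqref{PROP_PSIDELTA_1} uniform over $\delta\in(0,\tfrac14)$: a naive completion of the square on the outer branch, absorbing the whole linear cross term, leaves a defect of order $\frac1\delta(1+\tfrac\delta2)^2$, which blows up as $\delta\to 0$. The fix is to spend only a fixed fraction of the quadratic term $\frac1{2\delta}(|r|-1-\tfrac\delta2)^2$ on controlling $\frac12 r^2$ --- this is possible precisely because $\frac1{2\delta}\ge 2>\frac12$ --- and to use the remaining fraction to absorb the cross term with a $\delta$-independent constant, using that $1+\tfrac\delta2$ is bounded. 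Everything else reduces to elementary algebra on the explicit branches.
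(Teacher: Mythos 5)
Your proposal is correct and follows essentially the same route as the paper: direct verification on the explicit branches of \eqref{DEF_APPROX_POT_1}--\eqref{DEF_APPROX_POT_2}, with Young's inequality and $\tfrac{1}{2\delta}\geq 2$ handling the coercivity bound \eqref{PROP_PSIDELTA_1} on the outer branch. Your symmetry reduction to $r\geq 0$ and the one-line derivation of \eqref{PROP_PSIDELTA_5} from $0\leq\beta_{\Do,\delta}'\leq\tfrac{1}{\delta}$ are harmless streamlinings of the paper's case-by-case computation.
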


\begin{proof}
As $\psi_{\Do,\delta}$ is bounded for $|r| \leq 1 + \delta$, it suffices to show \eqref{PROP_PSIDELTA_1} holds for $|r| > 1 + \delta$.  By Young's inequality it is clear that for $r > 1 + \delta$ with $\delta \in (0,1/4)$,
\begin{align*}
\psi_{\Do,\delta}(r) \geq 2 \left(r-\left(1+\tfrac{\delta}{2}\right)\right)^2 - r^2 \geq C_0 |r|^2 - C_1,
\end{align*}
and a similar assertion holds also for $r < -1 - \delta$.  This establishes \eqref{PROP_PSIDELTA_1}.\\ 

\noindent From the definitions of $\hat \beta_{\Do,\delta}$ and $\beta_{\Do,\delta}$ we see that
\begin{align*}
\delta (\beta_{\Do,\delta}(r))^2 = \frac{1}{4 \delta^3}( r - 1)^4 \leq \frac{1}{4 \delta^2} (r-1)^3 \leq 2 \hat \beta_{\Do,\delta}(r) \leq \frac{\delta}{3} \leq 1 + \delta (\beta_{\Do,\delta}(r))^2
\end{align*}
for $r \in (1, 1+\delta)$, and 
\begin{align*}
\delta (\beta_{\Do,\delta}(r))^2 = \tfrac{1}{\delta} \left ( r - \left ( 1 + \tfrac{\delta}{2} \right ) \right )^2 \leq 2 \hat \beta_{\Do,\delta}(r) \leq 1+ \tfrac{1}{\delta}\left ( r - \left ( 1 + \tfrac{\delta}{2} \right ) \right )^2  \leq 1+ \delta (\beta_{\Do,\delta}(r))^2 
\end{align*}
for $r > 1 + \delta$.  Similar assertions also hold for the cases $r \in (-1-\delta, -1)$ and $r < -1 - \delta$, which then yield \eqref{PROP_PSIDELTA_3}.\\

\noindent A straightforward computation shows
\begin{align*}
\beta_{\Do,\delta}'(r) & = \delta (\beta_{\Do,\delta}'(r))^2 = \begin{cases}
\frac{1}{\delta} &  \text{ for } |r| \geq 1 + \delta, \\
0 & \text{ for } |r| \leq 1, 
\end{cases} \\
\delta (\beta_{\Do,\delta}'(r))^2 & = \frac{1}{\delta^3}(r-1)^2 \leq \frac{1}{\delta^2} (r-1) = \beta_{\Do,\delta}'(r)\text{ for } r \in (1, 1+\delta), \\
\delta (\beta_{\Do,\delta}'(r))^2 & = \frac{1}{\delta^3} (-(r+1))^2 \leq -\frac{1}{\delta^2}(r+1) = \beta_{\Do,\delta}'(r) \text{ for } r \in (-1-\delta, -1),
\end{align*}
and so \eqref{PROP_PSIDELTA_5} is established.
\end{proof}

Aside from approximating the singular potential, it would be necessary to extend the source functions $b_{\v}, b_\varphi, f_{\v}$ and $f_{\varphi}$ from $[-1,1]$ to the whole real line.  Since the solution variable $\varphi$ is supported in $[-1,1]$ (see $(\mathrm{c}1)$ of Definition \eqref{defn:timedep}), the particular form of extensions outside $[-1,1]$ does not play a significant role and we have the flexibility to choose extensions that would easily lead to uniform estimates.  Hence, unless stated otherwise we assume that $b_\v, b_\varphi, f_\v$ and $f_\varphi$ can be extended to $\R$ such that $f_{\varphi} \in C^0(\R) \cap L^{\infty}(\R)$, $f_{\v} \in C^1(\R)\cap W^{1,\infty}(\R)$, $b_{\varphi} \in C^0(\R) \cap L^{\infty}(\R)$ is nonnegative, $b_{\v} \in C^1(\R) \cap W^{1,\infty}(\R)$ is nonnegative, and fulfill
\begin{align}
\label{ASS_SOURCE_1}
b_{\v}(r)=0, \quad  b_{\varphi}(r)=0 & \quad \forall \; |r| \geq 1, \\
\label{ASS_SOURCE_2} r(f_{\varphi}(r)-f_{\v}(r)r) <0  & \quad \forall \; |r| > 1.
\end{align}
The latter implies $f_\varphi(r) - f_{\v}(r)r$ is strictly negative (resp.~positive) for $r > 1$ (resp.~$r < -1$).  Then, we can employ Lemma \ref{THM_WSOL_1} to deduce the existence of a quintuple $(\varphi_\delta, \mu_\delta, \sigma_\delta, \v_\delta, p_\delta)$ to \eqref{MEQ}-\eqref{BIC} with $\psi'_{\Do,\delta}$ replacing $\psi'$ and source terms $b_{\v}$, $b_{\varphi}$, $f_{\v}$ and $f_{\varphi}$ modified as above.  Uniform estimates will be derived in the next section and then we can pass to the limit $\delta \to 0$ to infer the existence of a weak solution to \eqref{MEQ} with the double obstacle potential in the sense of Definition \ref{defn:timedep}.  

\subsubsection{Logarithmic potential}
We define the convex part of $\psi_{\log}$ and its corresponding derivative as
\begin{equation*}
\hat{\beta}_{\log}(r) := \psi_{\log}(r)-\frac{\theta_c}{2}(1-r^2),\quad \beta_{\log}(r):= \hat{\beta}_{\log}'(r)\quad\forall r\in (-1,1).
\end{equation*}
For fixed $\delta > 0$, the approximation of $\psi_{\log}$ is the standard one:
\begin{equation}
\label{DEF_APPROX_POT_3}
\psi_{\log,\delta}(r)=\begin{cases}
\psi_{\log}(1-\delta) + \psi_{\log}'(1-\delta)\big(r-(1-\delta)\big) \\
 \quad + \frac{1}{2}\psi_{\log}''(1-\delta)\big(r-(1-\delta)\big)^2&\text{ for }r\geq 1-\delta,\\
\psi_{\log}(r)&\text{ for }|r|\leq 1- \delta,\\
\psi_{\log}(\delta-1) + \psi_{\log}'(\delta-1)\big(r-(\delta-1)\big) \\
 \quad + \frac{1}{2}\psi_{\log}''(\delta-1)\big(r-(\delta-1)\big)^2&\text{ for }r\leq -1+\delta,
\end{cases}
\end{equation}
with convex part and corresponding derivative
\begin{equation}
\label{DEF_APPROX_POT_4}\hat{\beta}_{\log,\delta}(r) := \psi_{\log,\delta}(r)-\frac{\theta_c}{2}(1-r^2), \quad \beta_{\log,\delta}(r) = \hat{\beta}_{\log,\delta}'(r) \quad \forall r \in \R.
\end{equation}

\begin{prop}\label{prop:Log:prop}
Let $\hat \beta_{\log,\delta}$ and $\psi_{\log,\delta}$ be defined as above.  Then, there exist positive constants $C_0, \dots, C_3$, such that for all $r \in \R$ and for all  $0 < \delta \leq \min(1,\theta/(4 \theta_c))$,
\begin{subequations}
\begin{alignat}{3}
\label{PROP_PSIDELTA_LOG_1}\psi_{\log,\delta}(r) & \geq C_0| r |^2 - C_1, \\ 
\label{PROP_PSIDELTA_LOG_1a} 4 \delta \theta^{-1} \hat{\beta}_{\log,\delta}(r) & \geq (| r | - 1)_{+}^2 := (\max(0, |r|-1))^2, \\
\label{PROP_PSIDELTA_LOG_2} \delta (\beta_{\log,\delta}(r))^2 & \leq 2\theta \hat \beta_{\log,\delta}(r)  + C_2 \leq C_3\left(\delta(\beta_{\log,\delta}(r))^2 + 1\right), \\ 
\label{PROP_PSIDELTA_LOG_4}\delta( \beta_{\log,\delta}'(r))^2  & \leq \theta \beta_{\log,\delta}'(r).\end{alignat}
\end{subequations}
\end{prop}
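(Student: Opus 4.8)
The argument mirrors the proof of Proposition~\ref{prop:DO:prop}: it is an explicit case analysis in $r$, and since $\psi_{\log,\delta}$, $\hat{\beta}_{\log,\delta}$, $\beta_{\log,\delta}$ and $\beta_{\log,\delta}'$ are all even, it suffices to treat $r\ge0$. The first step is to make the extension \eqref{DEF_APPROX_POT_3} explicit. On $\{0\le r\le 1-\delta\}$ all four functions coincide with $\psi_{\log}$, $\hat{\beta}_{\log}$, $\beta_{\log}(r)=\tfrac{\theta}{2}\log\tfrac{1+r}{1-r}$, $\beta_{\log}'(r)=\tfrac{\theta}{1-r^2}$. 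On $\{r\ge 1-\delta\}$, from $\beta_{\log,\delta}=\psi_{\log,\delta}'+\theta_c(\cdot)$ and the analogous relation $\beta_{\log}=\psi_{\log}'+\theta_c(\cdot)$ one checks that $\beta_{\log,\delta}$ is precisely the first-order Taylor polynomial of $\beta_{\log}$ at $1-\delta$; hence $\beta_{\log,\delta}'(r)\equiv\beta_{\log}'(1-\delta)=\tfrac{\theta}{\delta(2-\delta)}$ there, and integration gives
\begin{equation*}
\hat{\beta}_{\log,\delta}(r)=\hat{\beta}_{\log}(1-\delta)+\beta_{\log}(1-\delta)\,(r-(1-\delta))+\tfrac12\beta_{\log}'(1-\delta)\,(r-(1-\delta))^2 .
\end{equation*}
I would also record the elementary bounds $0\le\hat{\beta}_{\log}(1-\delta)\le\hat{\beta}_{\log}(1^-)=\theta\log2$; $\beta_{\log}(1-\delta)=\tfrac{\theta}{2}\log\tfrac{2-\delta}{\delta}\ge0$ with $\delta\,\beta_{\log}(1-\delta)^2\le c(\theta)$ uniformly in $\delta$ (as $x\log^2(1/x)$ is bounded on $(0,1]$); $\delta\,\beta_{\log}'(1-\delta)=\tfrac{\theta}{2-\delta}\in(\theta/2,\theta]$; and, crucially, for $\delta\le\theta/(4\theta_c)$, $\psi_{\log}''(1-\delta)=\beta_{\log}'(1-\delta)-\theta_c\ge\tfrac{\theta}{2\delta}-\theta_c\ge\theta_c$.

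With these formulas, \eqref{PROP_PSIDELTA_LOG_4} is immediate: $0\le\beta_{\log,\delta}'(r)\le\tfrac{\theta}{\delta(2-\delta)}\le\tfrac{\theta}{\delta}$ for every $r$, so $\delta\,\beta_{\log,\delta}'(r)\le\theta$, and multiplying by $\beta_{\log,\delta}'(r)\ge0$ gives the claim. For \eqref{PROP_PSIDELTA_LOG_1a}, both sides are even and vanish for $|r|\le1$ while $\hat{\beta}_{\log,\delta}\ge0$ by convexity (it vanishes to first order at $0$); for $r>1$ one has $\hat{\beta}_{\log,\delta}(r)\ge\int_1^r\beta_{\log,\delta}(s)\ds$, and $\beta_{\log,\delta}(s)\ge\beta_{\log}'(1-\delta)(s-1)\ge\tfrac{\theta}{2\delta}(s-1)$ for $s\ge1$, hence $\hat{\beta}_{\log,\delta}(r)\ge\tfrac{\theta}{4\delta}(r-1)^2=\tfrac{\theta}{4\delta}(|r|-1)_+^2$. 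For \eqref{PROP_PSIDELTA_LOG_2}, on $\{0\le r\le1-\delta\}$ both $\hat{\beta}_{\log,\delta}$ and $\delta\,\beta_{\log,\delta}^2$ are bounded by constants depending only on $\theta$, so both inequalities hold trivially; on $\{r\ge1-\delta\}$, writing $t=r-(1-\delta)$, $b_0=\beta_{\log}(1-\delta)\ge0$, $b_1=\beta_{\log}'(1-\delta)>0$, one has $\delta\,\beta_{\log,\delta}^2=\delta b_0^2+2\delta b_0b_1t+\delta b_1^2t^2$ and $2\theta\hat{\beta}_{\log,\delta}=2\theta\hat{\beta}_{\log}(1-\delta)+2\theta b_0t+\theta b_1t^2$, and the bounds $\theta/2<\delta b_1=\tfrac{\theta}{2-\delta}\le\theta$ let one compare the $t$- and $t^2$-coefficients of the two expressions term by term (using $\delta b_1\le\theta$ for the first inequality and $\theta\le2\delta b_1$ for the second). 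This yields $\delta\,\beta_{\log,\delta}^2\le2\theta\hat{\beta}_{\log,\delta}+\delta b_0^2$ and $2\theta\hat{\beta}_{\log,\delta}\le2\theta^2\log2+4\delta\,\beta_{\log,\delta}^2$, which is \eqref{PROP_PSIDELTA_LOG_2} after absorbing the bounded remainders into $C_2$, $C_3$.

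The only genuinely delicate estimate is \eqref{PROP_PSIDELTA_LOG_1}. For $0\le r\le1$ it is trivial since $\psi_{\log,\delta}=\psi_{\log}\ge0$ there. For $r\ge1-\delta$, with $t=r-(1-\delta)$, $\psi_{\log,\delta}(r)=\psi_{\log}(1-\delta)+\psi_{\log}'(1-\delta)t+\tfrac12\psi_{\log}''(1-\delta)t^2\ge\psi_{\log}'(1-\delta)t+\tfrac{\theta_c}{2}t^2$, using $\psi_{\log}(1-\delta)\ge0$ and the uniform convexity $\psi_{\log}''(1-\delta)\ge\theta_c$. If $\psi_{\log}'(1-\delta)\ge0$ this is already $\ge\tfrac{\theta_c}{2}t^2$. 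The subtle point is that $\psi_{\log}'(1-\delta)$ need not be nonnegative for all admissible $\delta$; however, when it is negative one has $\psi_{\log}'(1-\delta)=\tfrac{\theta}{2}\log\tfrac{2-\delta}{\delta}-\theta_c(1-\delta)<0$, so $|\psi_{\log}'(1-\delta)|\le\theta_c(1-\delta)\le\theta_c$, and completing the square gives $\psi_{\log,\delta}(r)\ge\tfrac{\theta_c}{4}t^2-\theta_c$. In both cases $\psi_{\log,\delta}(r)\ge\tfrac{\theta_c}{4}(r-(1-\delta))^2-\theta_c$; combining with $(r-(1-\delta))^2\ge\tfrac14r^2$ for $r\ge2$ and the trivial lower bound on the bounded range $1-\delta\le r\le2$ produces $\psi_{\log,\delta}(r)\ge C_0r^2-C_1$ with $C_0$, $C_1$ depending only on $\theta_c$. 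I expect this sign discussion — noticing that a negative boundary slope is automatically controlled by $\theta_c$, and that the hypothesis $\delta\le\theta/(4\theta_c)$ is exactly what secures the uniform convexity $\psi_{\log}''(1-\delta)\ge\theta_c$ — to be the crux; everything else is bookkeeping with the explicit piecewise formulas above.
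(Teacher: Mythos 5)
Your proposal is correct, and its overall architecture (reduction to $r\ge 0$, explicit quadratic-extension formulas at $1-\delta$, and using $\delta\le\theta/(4\theta_c)$ only through $\beta_{\log}'(1-\delta)=\tfrac{\theta}{\delta(2-\delta)}\ge 2\theta_c$) is essentially that of the paper; your arguments for \eqref{PROP_PSIDELTA_LOG_1a} and \eqref{PROP_PSIDELTA_LOG_4} coincide with the paper's. The two places where you genuinely deviate are \eqref{PROP_PSIDELTA_LOG_1} and the middle interval in \eqref{PROP_PSIDELTA_LOG_2}. For \eqref{PROP_PSIDELTA_LOG_1} the paper expands the convex part $\hat\beta_{\log,\delta}$ at $1-\delta$ rather than $\psi_{\log,\delta}$ itself: there the linear coefficient is $\beta_{\log}(1-\delta)\ge 0$, so that term can simply be dropped, the bound $\tfrac12\beta_{\log}'(1-\delta)\ge\theta_c$ gives $\hat\beta_{\log,\delta}(r)\ge\theta_c\,(r-(1-\delta))^2$, and adding back the concave piece $\tfrac{\theta_c}{2}(1-r^2)$ with Young's inequality yields the quadratic lower bound. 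In other words, the sign discussion of $\psi_{\log}'(1-\delta)$ that you single out as the crux is avoided entirely by the paper's decomposition; your handling of it (observing $|\psi_{\log}'(1-\delta)|\le\theta_c(1-\delta)$ when the slope is negative, then completing the square) is nevertheless correct, just slightly more laborious. For \eqref{PROP_PSIDELTA_LOG_2} on $[-1+\delta,1-\delta]$ the paper integrates the pointwise inequality $2\delta\beta_{\log,\delta}\beta_{\log,\delta}'\le 2\theta\beta_{\log,\delta}$ (and its analogue on the negative side) to obtain $\delta\beta_{\log,\delta}^2\le 2\theta\hat\beta_{\log,\delta}$ with no additive constant on that interval, whereas you simply note that both sides are uniformly bounded there and absorb everything into $C_2$, $C_3$; since the statement permits such constants, your shortcut is perfectly adequate, and your coefficient-by-coefficient comparison on $\{|r|\ge 1-\delta\}$ using $\tfrac{\theta}{2}<\delta\beta_{\log}'(1-\delta)\le\theta$ reproduces the paper's explicit computation.
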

\begin{proof}
For $r \geq 1 - \delta$ with $\delta \leq \frac{\theta}{4 \theta_c}$, a short calculation shows that
\begin{align*}
\hat \beta_{\log}(1-\delta) > 0, \quad \beta_{\log}(1-\delta)(r - (1-\delta)) \geq 0, \\
\beta_{\log}'(1-\delta)(r - (1-\delta))^2 \geq  2\theta_c(r - (1-\delta))^2,
\end{align*}
where the inequality $\beta_{\log}'(1-\delta) \geq 2 \theta_c$ comes from the facts $\beta_{\log}'(1-\delta) = \frac{\theta}{\delta(2-\delta)}$ and $\delta(2-\delta) \leq 2 \delta \leq \frac{\theta}{2 \theta_c}$.  Then, it is easy to see that \eqref{PROP_PSIDELTA_LOG_1} holds with the help of Young's inequality.  Analogously, using $\hat \beta_{\log}(\delta-1) > 0$, $\beta_{\log}'(\delta-1) = \beta_{\log}'(1-\delta)$ and $\beta_{\log}(\delta-1)(r - (\delta-1)) \geq 0$ for $r \leq -1 + \delta$, we infer that \eqref{PROP_PSIDELTA_LOG_1} also holds for $r \leq -1 + \delta$ with $\delta \leq \tfrac{\theta}{4 \theta_c}$.  Meanwhile for $\abs{r} \leq 1- \delta$, using the nonnegativity of $\hat \beta_{\log}$ yields
\begin{align*}
\psi_{\log}(r) \geq \frac{\theta_c}{2}(1-r^2) \geq C_0 \abs{r}^2 - C_1 \quad \forall \abs{r} \leq 1- \delta.
\end{align*}
This completes the proof of \eqref{PROP_PSIDELTA_LOG_1}.\\

\noindent For $|r| \leq 1$, we see that $(|r|-1)_{+}^2 = 0$ and so \eqref{PROP_PSIDELTA_LOG_1a} holds trivially due to the nonnegativity of $\hat{\beta}_{\log,\delta}$.  For $r>1$, the definition of $\hat{\beta}_{\log,\delta}$ gives
\begin{equation*}
\hat{\beta}_{\log,\delta}(r)\geq \frac{1}{2}\beta_{\log}'(1-\delta)(r-(1-\delta))^2 \geq \frac{\theta}{2\delta(2-\delta)}(r-1)^2\geq \frac{\theta}{4\delta}(r-1)^2,
\end{equation*}
and similar arguments yield $\hat{\beta}_{\log,\delta}(r)\geq \frac{\theta}{4\delta}(|r|-1)^2$ for $r<-1$, which shows \eqref{PROP_PSIDELTA_LOG_1a}.\\

\noindent For \eqref{PROP_PSIDELTA_LOG_2} we first observe that $\delta (\beta_{\log,\delta}(0))^2 = 0 = \hat \beta_{\log,\delta}(0)$ and for $\delta \in (0,1]$ we have
\begin{align*}
\beta_{\log,\delta}'(s) \leq \frac{\theta}{\delta(2-\delta)} & \leq \frac{\theta}{\delta},  \quad \beta_{\log,\delta}(s) \geq 0 \quad \forall s \in [0,1-\delta], \\
0 < \delta \beta_{\log,\delta}'(s) = \frac{\delta \theta}{1-s^2} & \leq \theta,  \quad \beta_{\log,\delta}(s) \leq 0 \quad \forall s \in [-1+\delta,0],
\end{align*}
which imply
\begin{align*}
[\delta ( \beta_{\log,\delta}(s))^2]' = 2 \delta \beta_{\log,\delta}(s) \beta_{\log,\delta}'(s) \leq 2 \theta \beta_{\log,\delta}(s) \quad &\forall s \in [0,1-\delta], \\
[\delta ( \beta_{\log,\delta}(s))^2]'  = 2 \delta \beta_{\log,\delta}(s) \beta_{\log,\delta}'(s) \geq 2 \theta \beta_{\log,\delta}(s) \quad&  \forall  s\in [-1+\delta, 0].
\end{align*}
Integrating the first inequality from $0$ to $r \in (0,1-\delta]$ and the second inequality from $r \in [-1+\delta,0)$ to $0$ yields
\begin{align*}
\delta (\beta_{\log,\delta}(r))^2 \leq 2 \theta \hat \beta_{\log,\delta}(r) \quad \forall r \in [-1+\delta, 1-\delta].
\end{align*}
Taking note that over $[-1+\delta, 1-\delta]$, $\hat \beta_{\log,\delta}(r)$ is bounded uniformly in $\delta \in (0,1]$, and so we easily infer the upper bound $2 \theta \hat \beta_{\log,\delta}(r) \leq C_2 ( \delta (\beta_{\log,\delta}(r))^2 + 1)$ for some positive constant $C_2 >0$ holding for all $r \in [-1+\delta, 1-\delta]$.  Meanwhile, a direct calculation shows that for $r \geq 1- \delta$ and $\delta \in (0,1]$ we have
\begin{align*}
\delta \beta_{\log,\delta}(r)^2 & = \frac{\theta^2}{\delta(2-\delta)^2} (r - (1-\delta))^2 + \frac{\theta^2 }{2-\delta} \log \Big ( \frac{2-\delta}{\delta} \Big ) (r - (1-\delta)) + \frac{\theta^2}{4} \delta \log \Big ( \frac{2-\delta}{\delta} \Big )^2 \\
& \leq 2 \theta \hat \beta_{\log,\delta}(r)  + \frac{\theta^2}{4} \delta \log \Big ( \frac{2-\delta}{\delta} \Big)^2 - 2 \theta \hat \beta_{\log}(1-\delta) \leq 2 \theta \hat \beta_{\log,\delta}(r) + C_1, \\
\frac{\theta}{2} \hat \beta_{\log,\delta}(r) & = \frac{\theta}{2} \hat \beta_{\log}(1-\delta) + \frac{\theta^2}{4} \log \Big ( \frac{2-\delta}{\delta} \Big ) (r - (1-\delta)) + \frac{\theta^2}{2(2-\delta)\delta} (r - (1-\delta))^2 \\
& \leq  \delta \beta_{\log,\delta}(r)^2 + \frac{\theta}{2} \hat \beta_{\log}(1-\delta) - \frac{\theta^2}{4} \delta \log \Big ( \frac{2-\delta}{\delta} \Big )^2 \leq \delta \beta_{\log,\delta}(r)^2 + C
\end{align*}
on account of $\frac{1}{2} \leq \frac{1}{2-\delta} \leq 1$, the positivity of $\log((2-\delta)/\delta) (r - (1-\delta))$ and the boundedness of $\hat{\beta}_{\log}(1-\delta)$ and $\delta \log ((2-\delta)/\delta)^2$ for $\delta \in (0,1]$.  An analogous calculation leads to similar inequalities for $r \leq -1 + \delta$, and thus \eqref{PROP_PSIDELTA_LOG_2} is established.\\

\noindent For \eqref{PROP_PSIDELTA_LOG_4} a straightforward calculation using $\frac{1}{2-\delta} \leq 1$ gives
\begin{align*}
\delta (\beta_{\log,\delta}'(r))^2 = \frac{\theta^2}{(2-\delta)^2 \delta} \leq \theta \beta_{\log,\delta}'(r) \quad \forall \abs{r} \geq 1 - \delta, \\
\delta (\beta_{\log,\delta}'(r))^2 = \frac{\delta \theta^2}{(1-r^2)^2} \leq \frac{\theta^2}{(2-\delta)(1-r^2)} \leq \theta \beta_{\log,\delta}'(r) \quad \forall \abs{r} \leq 1 - \delta.
\end{align*}
This completes the proof.
\end{proof}

Once again, we extend the source functions $b_{\v}, b_\varphi, f_{\v}$ and $f_{\varphi}$ from $[-1,1]$ to the whole real line in a way that satisfies \eqref{ASS_SOURCE_1} and additionally
\begin{align}\label{ass:log:f}
r f_\v(r)  - f_\varphi(r) \begin{cases}
> 0 \text{ for } r \in [1, r_0], \\
= 0 \text{ for } \abs{r} \geq 2 r_0, \\
< 0 \text{ for } r \in [-r_0, -1],
\end{cases}
\end{align}
with smooth interpolation in $[r_0, 2 r_0]$ and $[-2r_0, r_0]$ for some fixed constant $r_0 > 1$.  

\subsection{Existence of approximate solutions}
To unify our analysis, we use the notation
\begin{align*}
\psi_\delta = \begin{cases}
\psi_{\Do,\delta} & \text{ for } \psi_{\Do}, \\
 \psi_{\log,\delta} & \text{ for } \psi_{\log},
\end{cases} \quad \Theta_c = \begin{cases}
1 & \text{ for } \psi_{\Do}, \\
\theta_c & \text{ for } \psi_{\log},
\end{cases}
\end{align*}
and denote by $\hat \beta_\delta$ the convex part of $\psi_\delta$.  Due to Proposition \ref{prop:DO:prop} and \ref{prop:Log:prop} and by the definition of $\psi_\delta(\cdot)$, we can employ Lemma \ref{THM_WSOL_1}, and infer, for every $\delta \in (0,1)$, the existence of a weak solution quintuple $(\varphi_{\delta},\mu_{\delta},\sigma_{\delta},\v_{\delta},p_{\delta})$ to \eqref{MEQ}-\eqref{BIC} with $\psi_{\delta}'$ in \eqref{MEQ_4}.  More precisely, 
\begin{subequations}
\begin{alignat}{3}
\label{WFORM_1_new}
\div(\v_\delta) = \Gamma_{\v}(\varphi_\delta, \sigma_\delta) & \quad\text{a.e.~in } \Omega_T,  \\
\mu_{\delta} = \psi_{\delta}'(\varphi_{\delta}) - \lap \varphi_{\delta}-\chi\sigma_{\delta} & \quad\text{a.e. in }\Omega_T \label{WFORM_1c_new}, 
\end{alignat}
\end{subequations}
and 
\begin{subequations}
\begin{alignat}{3}
\label{WFORM_1a_new}0 &= \intO \T(\v_\delta ,p_\delta ) : \grad \boldsymbol{\Phi}+\nu \v_\delta \cdot \boldsymbol{\Phi}  - (\mu_\delta +\chi\sigma_\delta ) \grad\varphi_\delta \cdot \boldsymbol{\Phi}\dx, \\
\label{WFORM_1b_new} 0 &= \inn{\delt \varphi_\delta}{\zeta}_{H^1}  + \intO \grad\mu_\delta \cdot\grad \zeta+ (\grad\varphi_\delta \cdot\v_\delta+ \varphi_\delta \Gamma_{\v}(\varphi_\delta,\sigma_\delta) - \Gamma_{\varphi}(\varphi_\delta,\sigma_\delta)) \zeta \dx,  \\
\label{WFORM_1d_new} 0  &=  \intO \grad\sigma_\delta \cdot\grad\zeta + h(\varphi_\delta )\sigma_\delta \zeta \dx +\intS K(\sigma_\delta-1)\zeta \dH,
\end{alignat}
\end{subequations} 
for a.e.~$t\in(0,T)$ and for all $\boldsymbol{\Phi}\in \H^1$ and $\zeta \in H^1$.  

\subsection{Uniform estimates}\label{sec:unif:do}
We begin with an auxiliary result for the product of $\beta_{\log,\delta}$ and the source terms.
\begin{lem}\label{lem:log:term}
Let $\beta_{\log,\delta}$ denote the derivative of \eqref{DEF_APPROX_POT_4}, and let $b_{\v} \in C^1(\R) \cap W^{1,\infty}(\R)$ nonnegative, $b_\varphi \in C^0(\R) \cap L^{\infty}(\R)$ nonnegative, $f_\v \in C^1(\R) \cap W^{1,\infty}(\R)$, and $f_\varphi \in C^0(\R) \cap L^{\infty}(\R)$ be given such that \eqref{ass:log:source}, \eqref{ass:log:source:2}, \eqref{ASS_SOURCE_1} and \eqref{ass:log:f} are satisfied.  Then, there exists $\delta_0 > 0$ and a positive constant $C$ independent of $\delta\in (0,\delta_0)$ such that for all $\delta \in (0,\delta_0)$, $s \geq 0$ and $r \in \R$,
\begin{align}\label{log:source:est:A}
(r \Gamma_\v(r,s) -  \Gamma_{\varphi}(r,s)) \beta_{\log,\delta}(r) \geq - C(1 + |s| + |r|).
\end{align}
\end{lem}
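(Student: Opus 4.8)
The plan is to split the analysis according to the sign and magnitude of $r$, exploiting the structure of $\beta_{\log,\delta}$ together with the sign conditions \eqref{ASS_SOURCE} (extended as \eqref{ASS_SOURCE_1}, \eqref{ass:log:f}) and the smallness hypotheses \eqref{ass:log:source}, \eqref{ass:log:source:2}. Recall that $\beta_{\log,\delta}(r) = \beta_{\log}(r) = \theta\,\mathrm{artanh}(r) = \tfrac{\theta}{2}\log\!\big(\tfrac{1+r}{1-r}\big)$ on $[-1+\delta, 1-\delta]$, is bounded on $[-1,1]$ away from the endpoints, and is positive and increasing for $r > 0$, negative and decreasing for $r < 0$, with $|\beta_{\log,\delta}(r)|$ growing linearly in $r$ for $|r| \geq 1-\delta$. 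Writing the source as $r\Gamma_\v(r,s) - \Gamma_\varphi(r,s) = (r b_\v(r) - b_\varphi(r))s + (r f_\v(r) - f_\varphi(r))$, I would treat the $s$-dependent part and the $s$-independent part separately.

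First I would handle the region $|r| \leq 1$. There, $b_\v$ and $b_\varphi$ are bounded so $(r b_\v(r) - b_\varphi(r))s \,\beta_{\log,\delta}(r)$ is controlled once we know $b_\varphi(r)\beta_{\log,\delta}(r)$ is bounded; this is exactly where \eqref{ass:log:source:2} enters, since $\beta_{\log,\delta}(r)$ behaves like $\tfrac{\theta}{2}\log(\tfrac{1+r}{1-r})$ there, and \eqref{ass:log:source} gives the needed decay near $\pm 1$ so that $b_\varphi(r)\beta_{\log,\delta}(r) = O(\delta \log(1/\delta)) \to 0$ uniformly. Hence the $s$-term is bounded below by $-C(1 + |s|)$ on $[-1,1]$, and the $s$-independent term $(r f_\v(r) - f_\varphi(r))\beta_{\log,\delta}(r)$ is likewise bounded on $[-1,1]$ (the only possible blow-up is at $\pm 1$, but there $|r f_\v(r) - f_\varphi(r)| \cdot |\beta_{\log,\delta}(r)|$ is still $O(\log(1/\delta))$, which for the purpose of a bound depending on $\delta$ through $\delta_0$ only — wait, the constant must be $\delta$-independent). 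The cleaner route near $r = \pm 1$: for $r \in [1-\delta, 1]$, $rf_\v(r) - f_\varphi(r) = f_\v(1) - f_\varphi(1) + o(1) > 0$ by continuity and \eqref{ass:log:f}, while $\beta_{\log,\delta}(r) > 0$, so the product is nonnegative for $\delta$ small; symmetrically for $r \in [-1, -1+\delta]$ the product is again nonnegative. On the compact set $[-1+\delta_0, 1-\delta_0]$ everything is bounded by a $\delta$-independent constant.

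Next, for $|r| > 1$ I use $\beta_{\log,\delta}$ linear with slope $\tfrac{\theta}{\delta(2-\delta)} \sim \tfrac{\theta}{2\delta}$ and sign equal to $\mathrm{sgn}(r)$. Here $b_\v(r) = b_\varphi(r) = 0$ by \eqref{ASS_SOURCE_1}, so the $s$-term vanishes identically and only $(r f_\v(r) - f_\varphi(r))\beta_{\log,\delta}(r)$ remains. For $r \in [1, r_0]$ we have $rf_\v(r) - f_\varphi(r) > 0$ and $\beta_{\log,\delta}(r) \geq 0$: nonnegative product. For $r \geq 2r_0$, $rf_\v(r) - f_\varphi(r) = 0$: zero product. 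For $r \in [r_0, 2r_0]$, a compact interval, the interpolation is smooth and $\beta_{\log,\delta}(r)$ is of size $O(1/\delta)$, so I need $|rf_\v(r) - f_\varphi(r)|$ to compensate — but that quantity is $O(1)$, not $O(\delta)$, so the naive bound fails. The fix is that the interpolation in \eqref{ass:log:f} should be chosen so that $rf_\v(r) - f_\varphi(r) \geq 0$ throughout $[r_0, 2r_0]$ (decreasing from a positive value to $0$), making the product nonnegative; this is permissible because we have complete freedom in the extension. The symmetric analysis on $r < -1$ gives $r f_\v(r) - f_\varphi(r) \leq 0$ and $\beta_{\log,\delta}(r) \leq 0$, hence again a nonnegative product. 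Collecting all cases yields \eqref{log:source:est:A} with $C$ depending only on the sup-norms of the sources on $[-1,1]$, on $\|b_\varphi \log(\tfrac{1+\cdot}{1-\cdot})\|_{C^0([-1,1])}$ from \eqref{ass:log:source:2}, and on $\theta$.

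The main obstacle is the interplay between the logarithmic blow-up of $\beta_{\log,\delta}$ near $\pm 1$ (inside $[-1,1]$) and its $O(1/\delta)$ linear growth just outside $[-1,1]$, against source terms that are only $O(1)$ there: a crude triangle-inequality estimate is not $\delta$-uniform. The resolution is to never take absolute values in the "bad" zones but instead to verify that the product $(r\Gamma_\v - \Gamma_\varphi)\beta_{\log,\delta}(r)$ is genuinely \emph{nonnegative} in a neighbourhood of $\{|r| = 1\}$ — which is precisely the role of the sign conditions \eqref{ASS_SOURCE}/\eqref{ass:log:f} — and to use \eqref{ass:log:source} only to control the single remaining genuinely-bounded-but-not-sign-definite contribution $b_\varphi(r)\beta_{\log,\delta}(r) s$ on $[-1+\delta,1-\delta]\cap\{|r|\text{ near }1\}$. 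I would state $\delta_0$ explicitly as (a multiple of) $\min(1,\theta/(4\theta_c))$ intersected with the threshold below which the continuity-based sign assertions near $\pm 1$ hold.
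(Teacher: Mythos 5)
Your proposal is correct and follows essentially the same route as the paper's proof: you split off the $s$-dependent part, which vanishes for $|r|\geq 1$ by \eqref{ASS_SOURCE_1}, is controlled near $\pm 1$ by \eqref{ass:log:source} on the bands $1-\delta\leq |r|\leq 1$ and by \eqref{ass:log:source:2} on $|r|\leq 1-\delta$ (the contribution $s\,b_\v(r)\,r\,\beta_{\log,\delta}(r)$ being nonnegative), and you treat $H(r)=rf_\v(r)-f_\varphi(r)$ by the sign of $H$ on a $\delta_0$-neighbourhood of $|r|=1$ and out to $2r_0$, plus a $\delta$-independent bound on the compact core $|r|\leq 1-\delta_0$, exactly as the paper does. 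Your explicit observation that the extension on $[r_0,2r_0]$ must be chosen sign-preserving is precisely what the paper implicitly assumes when it asserts that $H$ and $\beta_{\log,\delta}$ have the same sign on $[1-\delta_0,2r_0]$.
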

\begin{proof}
	We define
	\begin{align*}
	G(r,s) = s (b_{\v}(r)r - b_\varphi(r))\beta_{\log,\delta}(r).
	\end{align*}
	Then, due to \eqref{ASS_SOURCE_1}, $G(r,s) = 0$ for $s \geq 0$ and $|r| \geq 1$.  Meanwhile, for $r \in [-1, -1+\delta] \cup [1-\delta, 1]$, using the nonnegativity of $b_{\v}$ and the relation $r \beta_{\log,\delta}(r) \geq 0$ for all $r \in \R$ allow us to neglecting the nonnegative part $s b_\v(r) r \beta_{\log,\delta}(r)$ of $G(r,s)$.  Then, by \eqref{ass:log:source}, we have for $\delta \in (0,1)$, $s \geq 0$ and $r \in [1-\delta,1]$,
	\begin{align*}
	G(r,s) & \geq - s b_\varphi(r) \beta_{\log,\delta}(r) \geq - s c \delta \beta_{\log,\delta}(r) \\
	& = - s c \big ( \tfrac{\theta}{2-\delta}(r - (1-\delta)) + \tfrac{\theta \delta}{2} \log \tfrac{2-\delta}{\delta} \big ) \geq - C |s|.
	\end{align*}
A similar assertion holds for $r \in [-1,-1+\delta]$.  Lastly, for $|r| \leq 1-\delta$, we use \eqref{ass:log:source:2} to deduce that 
	\begin{align*}
	G(r,s) \geq - s b_\varphi(r) \beta_{\log,\delta}(r) = - s b_\varphi(r) \log \tfrac{1+r}{1-r} \geq - |s| \max_{r \in [-1,1]} \left | b_\varphi(r) \log \tfrac{1+r}{1-r} \right | \geq - C |s|.
	\end{align*}
	Therefore, for all $\delta > 0$, $s \geq 0$ and $r \in \R$, it holds that $G(r,s) \geq - C|s|$.  Next, we define
	\begin{align*}
	H(r) = (r f_\v(r) - f_\varphi(r))\quad \forall r\in\R.
	\end{align*}
	By continuity of $H(r)$ and \eqref{ass:log:f}, we can find a constant $\delta_0 \in (0, r_0-1)$ such that 
	\begin{align*}
	H(r)  > 0 & \text{ for } r \in (1-\delta_0, 1+\delta_0) ,\\ H(r) < 0 & \text{ for } r \in (-1-\delta_0, -1+\delta_0).
	\end{align*}
	Then, it is clear that for $|r| \geq 2 r_0$, $H(r) = 0$ thanks to \eqref{ass:log:f} satisfied by the extensions of $f_{\v}$ and $f_{\varphi}$.  Meanwhile, for any $\delta \in (0,\delta_0)$, we see that if $|r| \leq 1-\delta_0 < 1 -\delta$, then 
	\begin{align*}
	|\beta_{\log,\delta}(r)| = \big | \log \tfrac{1+r}{1-r} \big | \leq \log \tfrac{2-\delta_0}{\delta_0}, \quad |H(r)| \leq C(1 + |r|),
	\end{align*}
	which implies that 
	\begin{align*}
	H(r) \beta_{\log,\delta}(r) \geq - C ( 1 + |r|) \text{ for } |r| \leq 1- \delta_0.
	\end{align*}
	On the other hand, as $r_0 > 1$, for $r \in [-2r_0, -1+\delta_0] \cup [1-\delta_0, 2r_0]$, we use that $\beta_{\log,\delta}(r)$ and $H(r)$ have the same sign, so that their product $H(r) \beta_{\log,\delta}(r)$ is nonnegative.  Hence, combining with the above analysis for the function $G$, we obtain the assertion \eqref{log:source:est:A}.
\end{proof}
In the following, we derive uniform estimates in $\delta$, and denote by $C$ a generic constant independent of $\delta$ which may change its value even within one line.
\paragraph{Nutrient estimates.} Choosing $\zeta = \sigma_{\delta}$ in \eqref{WFORM_1d_new} and using the nonnegativity of $h(\cdot) $ leads to
\begin{equation}
\label{APRI_EQ_1}\intO |\grad\sigma_{\delta}|^2\dx   + K\intS |\sigma_{\delta}|^2 \dH \leq  K\int_{\del\Omega}\sigma_{\delta}\dH \leq \frac{K}{2}\norm{\sigma_{\delta}}_{L^2(\Sigma)}^2 + \frac{K}{2}|\Sigma|,
\end{equation}
from which we deduce that $\sigma_\delta$ is uniformly bounded in $L^{\infty}(0,T;H^1)$.  Elliptic regularity additionally yields
\begin{equation}
\label{APRI_EQ_4}\norm{\sigma_{\delta}}_{L^{\infty}(0,T;H^2)} \leq C.
\end{equation}
By a standard argument with the comparison principle, testing with $\zeta = - (\sigma_\delta)_{-}$ and $\zeta = (\sigma_\delta - 1)_{+}$ will yield the pointwise a.e.~boundedness of $\sigma_\delta$:
\begin{align*}
0 \leq \sigma_\delta \leq 1 \quad\text{a.e.~in } \Omega_T.
\end{align*}
Then, the continuous embedding $H^2\hookrightarrow L^\infty$ and the assumptions on $b_{\v}$, $b_{\varphi}$, $f_{\v}$ and $f_{\varphi}$ lead to
\begin{equation}
\label{APRI_EQ_5}\norm{\Gamma_{\varphi}}_{L^{\infty}(0,T;L^{\infty})} + \norm{\Gamma_{\v}}_{L^{\infty}(0,T;L^{\infty})}\leq C.
\end{equation}

\paragraph{Estimates from energy identity.} Thanks to \eqref{APRI_EQ_5}, the function $\u := \DD(\Gamma_{\v}) \in \H^1$ satisfies
the estimate
\begin{align}
\label{APRI_EQ_7}\normW{1}{p}{\u}\leq C\norml{p}{\Gamma_{\v}}\leq C\quad\forall p\in (1,\infty).
\end{align}
Technically, we should stress the dependence of $\u$ on $\delta$, but in light of the uniform estimate \eqref{APRI_EQ_7} we infer that $\u_\delta$ is bounded in $L^{\infty}(0,T;\W^{1,p})$ for any $p \in (1,\infty)$.  Henceforth, we drop the index $\delta$ and reuse the variable $\u$.

Choosing $\boldsymbol{\Phi}=\v_{\delta}-\u$ in \eqref{WFORM_1a_new}, $\zeta = \mu_{\delta}+\chi\sigma_{\delta}$ in \eqref{WFORM_1b_new}, using \eqref{WFORM_1c_new}  and summing the resulting identities, we obtain
\begin{equation}\label{APRI_EQ_8}
\begin{aligned}
 &\frac{\d}{\dt} \intO \psi_{\delta}(\varphi_{\delta})+\frac{1}{2}|\grad\varphi_{\delta}|^2 \dx  +\intO |\grad\mu_{\delta}|^2 + 2\eta(\varphi_{\delta})|\D\v_{\delta}|^2 +\nu|\v_{\delta}|^2\dx \\
&\quad = \intO -\chi\grad\mu_{\delta}\cdot\grad\sigma_{\delta} + (\Gamma_{\varphi}-\varphi_{\delta}\Gamma_{\v})(\beta_{\delta}(\varphi_\delta) - \Theta_c\varphi_\delta-\lap\varphi_{\delta})\dx \\
& \qquad +\intO 2\eta(\varphi_{\delta}) \D\v_{\delta}\colon \D\u  + \nu\v_{\delta}\cdot \u \dx  - \intO (\beta_{\delta}(\varphi_\delta) - \Theta_c\varphi_\delta -\lap\varphi_{\delta})\grad\varphi_{\delta} \cdot \u \dx,
\end{aligned}
\end{equation}
where we used the fact that $\psi_{\delta}$ is a quadratic perturbation of a convex function $\hat \beta_{\delta}$, in conjunction with \cite[Lem.~4.1]{RS} to obtain the time derivative of the energy.  By Young's inequality and the estimates \eqref{APRI_EQ_4}, \eqref{APRI_EQ_5} and \eqref{APRI_EQ_7}, we find that
\begin{equation}\label{APRI_EQ_9}
\begin{aligned}
& \left | \intO 2\eta(\varphi_{\delta}) \D\v_{\delta}\colon \D\u  + \nu\v_{\delta}\cdot \u  -\chi\grad\mu_{\delta}\cdot\grad\sigma_{\delta} \dx \right | \\
& \qquad + \left | \intO (\Gamma_{\varphi}-\varphi_{\delta}\Gamma_{\v} - \grad\varphi_{\delta} \cdot \u)( \Theta_c\varphi_\delta + \lap\varphi_{\delta}) \dx \right | \\
& \quad \leq \normL{2}{\sqrt{\eta(\varphi_\delta)} \D \v_\delta}^2 + \frac{\nu}{2} \normL{2}{\v_\delta}^2 + \frac{1}{4} \normL{2}{\nabla \mu_\delta}^2 + 2 \eps \norml{2}{\lap \varphi_\delta}^2 + C \left( 1 + \normh{1}{\varphi_\delta}^2 \right)
\end{aligned}
\end{equation}
for a positive constant $\eps$ yet to be determined.  It remains to control the two terms with $\beta_\delta(\varphi_\delta)$.  Integrating by parts and employing the definition of $\u = \DD(\Gamma_{\v})$ leads to 
\begin{equation}\label{APRI_EQ_16}
\begin{aligned}
& \intO \beta_{\delta}(\varphi_{\delta})\grad\varphi_{\delta}\cdot\u\dx = \intO \grad(\hat{\beta}_{\delta}(\varphi_{\delta}))\cdot\u\dx\\
 & \quad = \frac{1}{|\Sigma|}\left(\intO \Gamma_{\v}\dx\right)\intS \hat{\beta}_{\delta}(\varphi_{\delta})\dH - \intO \hat{\beta}_{\delta}(\varphi_{\delta})\Gamma_{\v}\dx.
\end{aligned}
\end{equation}
Using \eqref{APRI_EQ_7} and the relation $\hat \beta_{\delta}(r) = \psi_{\delta}(r) -\frac{\Theta_c}{2}(1-r^2)$, we obtain
\begin{equation}\label{APRI_EQ_17}
\begin{aligned}
\left|\intO \hat \beta_{\delta}(\varphi_{\delta})\Gamma_{\v}\dx\right| &= \left|\intO \left(\psi_{\delta}(\varphi_{\delta})-\tfrac{\Theta_c}{2}(1-\varphi_{\delta}^2)\right)\Gamma_{\v}\dx \right| \\
&\leq C\left(1+\norml{1}{\psi_{\delta}(\varphi_{\delta})} + \norml{2}{\varphi_{\delta}}^2\right).
\end{aligned}
\end{equation}
Meanwhile, using \eqref{PROP_PSIDELTA_3}, \eqref{PROP_PSIDELTA_5}, \eqref{PROP_PSIDELTA_LOG_2}, \eqref{PROP_PSIDELTA_LOG_4} and the trace theorem, it follows that
\begin{equation}\label{APRI_EQ_18}
\begin{aligned}
\norm{\hat \beta_{\delta}(\varphi_\delta)}_{L^1(\Sigma)} 
&\leq C + C\norm{\sqrt{\delta}\,\beta_{\delta}(\varphi_{\delta})}_{L^2(\Sigma)}^2 \\
 &\leq C \left(\norml{2}{\sqrt{\delta}\, \beta_{\delta}(\varphi_{\delta})}^2 + \normL{2}{\sqrt{\delta} \beta_{\delta}'(\varphi_{\delta})\grad\varphi_{\delta}}^2\right) +  C\\
 &\leq  C\left(\norml{1}{\hat{\beta}_{\delta}(\varphi_{\delta})} + \intO \beta_{\delta}'(\varphi_{\delta})|\grad\varphi_{\delta}|^2\dx \right) +  C\\
&\leq C \left ( 1 + \norml{2}{\varphi_\delta}^2 + \norml{1}{\psi_\delta(\varphi_\delta)} \right ) + C_* \intO \beta_{\delta}'(\varphi_{\delta})|\grad\varphi_{\delta}|^2\dx
\end{aligned}
\end{equation}
for positive constants $C$ and $C_*$ independent of $\delta$. To deal with the remaining term we use the definition \eqref{DEF_APPROX_POT_2} of $\beta_{\Do,\delta}$ and \eqref{ASS_SOURCE_1}-\eqref{ASS_SOURCE_2} to deduce that, for any $s \geq 0$ and $r \in \R$,
\begin{align*}
\beta_{\Do,\delta}(r)(\Gamma_{\varphi}(r,s) - r \Gamma_{\v}(r,s))  \begin{cases} = 0 & \text{ for } |r| \leq 1 \text{ and } s \geq 0, \\
< 0 & \text{ for } |r| > 1 \text{ and } s \geq 0,
\end{cases}
\end{align*}
which implies that
\begin{align}\label{source:doleq0}
\intO (\Gamma_{\varphi}(\varphi_\delta, \sigma_\delta)-\varphi_{\delta}\Gamma_{\v} (\varphi_\delta, \sigma_\delta))\beta_{\Do,\delta}(\varphi_{\delta})\dx \leq 0.
\end{align}
Meanwhile, for the other part with $\beta_{\log,\delta}$, we use \eqref{log:source:est:A} and \eqref{APRI_EQ_4} to obtain
\begin{align}\label{log:source:est}
\int_\Omega ( \Gamma_{\varphi}(\varphi_\delta, \sigma_\delta) - \varphi_\delta \Gamma_\v(\varphi_\delta, \sigma_\delta)) \beta_{\log,\delta}(\varphi_\delta) \dx \leq  C \Big ( 1 + \norml{1}{\varphi_\delta} \Big ).
\end{align} 
Combining \eqref{source:doleq0} with \eqref{log:source:est}, we find that 
\begin{align*}
\intO (\Gamma_{\varphi}(\varphi_\delta, \sigma_\delta)-\varphi_{\delta}\Gamma_{\v} (\varphi_\delta, \sigma_\delta))\beta_{\delta}(\varphi_{\delta})\dx  \leq C \big ( 1 + \norml{2}{\varphi_\delta} \big ),
\end{align*}
and so, when substituting \eqref{APRI_EQ_9}-\eqref{APRI_EQ_18} and \eqref{log:source:est} into \eqref{APRI_EQ_8}, we arrive at
\begin{align*}
& \frac{d}{dt}  \intO \psi_\delta(\varphi_\delta) + \frac{1}{2} |\nabla \varphi_\delta|^2 \dx + \intO \frac{3}{4} |\nabla \mu_\delta|^2 +  \eta(\varphi_\delta) |\D \v_\delta|^2 + \frac{\nu}{2} |\v_\delta|^2 \dx \\
& \quad \leq C \Big ( 1 + \norml{1}{\psi_\delta(\varphi_\delta)} + \norml{2}{\varphi_\delta}^2 + \normL{2}{\nabla \varphi_\delta}^2 \Big ) \\
& \qquad + 2 \eps \norml{2}{\lap \varphi_\delta}^2 +  C_* \intO \beta_\delta'(\varphi_\delta) |\nabla \varphi_\delta|^2 \dx.
\end{align*}
Testing  \eqref{WFORM_1c_new} with $-A\lap\varphi_{\delta}$ for some positive constant $A$, integrating by parts and using $\deln \varphi_\delta = 0$ on $\Sigma$ and \eqref{APRI_EQ_4} yields
\begin{equation}\label{APRI_EQ_15}
\begin{aligned}
A\intO |\lap\varphi_{\delta}|^2 + \beta_{\delta}'(\varphi_{\delta})|\grad\varphi_{\delta}|^2\dx & = A\intO \grad(\mu_{\delta}+\chi\sigma_{\delta})\cdot\grad\varphi_{\delta} + \Theta_c |\grad\varphi_{\delta}|^2\dx \\
& \leq C \left(1+\normL{2}{\grad\varphi_{\delta}}^2\right) + \frac{1}{4} \norml{2}{\grad\mu_{\delta}}^2.
\end{aligned}
\end{equation}
Then, summing up the last two inequalities and choosing $A > C_*$ and $\eps < \frac{A}{4}$ yields
\begin{equation}\label{APRI_EQ_19}
\begin{aligned}
& \frac{d}{dt} \left ( \norml{1}{\psi_{\delta}(\varphi_\delta)} + \norml{2}{\nabla \varphi_\delta}^2 \right ) - C \left ( \norml{1}{\psi_{\delta}(\varphi_\delta)} + \norml{2}{\nabla \varphi_\delta}^2 \right ) + \norml{2}{\nabla \mu_\delta}^2 \\
& \qquad + \normL{2}{(\beta_{\delta}'(\varphi_\delta))^{1/2} \nabla \varphi_\delta}^2 + \norml{2}{\lap \varphi_\delta}^2 + \norml{2}{(\eta(\varphi_\delta))^{1/2}  \D \v_\delta}^2 + \nu \norml{2}{\v_\delta}^2  \\
& \quad \leq C.
\end{aligned}
\end{equation}
Before applying a Gronwall argument, we note that for the double obstacle potential, the assumption \eqref{ass:do:ini} implies $\hat \beta_{\Do,\delta}(\varphi_0) = 0$, and for the logarithmic potential, the assumption \eqref{ass:log:ini} implies there exists $\delta_1 > 0$ such that $|\varphi_0(x)| \leq 1 - \delta_1$ for a.e.~$x \in \Omega$, and so $\hat \beta_{\log, \delta}(\varphi_0)$ is uniformly bounded.  Hence, for $0 < \delta < \min (1, \theta/(4 \theta_c), \delta_0, \delta_1)=: \delta_*$, we see that 
\begin{align*}
\norml{1}{\hat \beta_{\delta}(\varphi_0)} \leq C.
\end{align*}
Integrating \eqref{APRI_EQ_19} in time from $0$ to $s\in (0,T]$, using \eqref{PROP_PSIDELTA_1}, \eqref{PROP_PSIDELTA_LOG_1}, Korn's inequality and elliptic regularity theory, we deduce the uniform estimate
\begin{align}
\nonumber&\norm{\psi_{\delta}(\varphi_{\delta})}_{L^{\infty}(0,T;L^1)} + \norm{\varphi_{\delta}}_{L^{\infty}(0,T;H^1)\cap L^2(0,T;H^2)} + \norm{\grad\mu_{\delta}}_{L^2(0,T;\LP^2)} \\
\label{APRI_EQ_20}&\quad + \norm{(\beta_{\delta}'(\varphi_{\delta}))^{1/2}\grad\varphi_{\delta}}_{L^2(0,T;\LP^2)}+ \norm{\v_{\delta}}_{L^2(0,T;\H^1)}\leq C.
\end{align}
By the Sobolev embedding $\H^1\subset \LP^6$ and \eqref{APRI_EQ_20}, it follows that
\begin{align*}
\intT \norml{3/2}{\grad\varphi_{\delta} \cdot \v_{\delta}}^2\dt\leq \intT \normL{2}{\grad\varphi_{\delta}}^2\normL{6}{\v_{\delta}}^2\dt\leq \norm{\varphi_{\delta}}_{L^{\infty}(0,T;H^1)}^2\norm{\v_{\delta}}_{L^2(0,T;\H^1)}^2 \leq C.
\end{align*}
A similar argument together with \eqref{APRI_EQ_5} shows that $\varphi_\delta \Gamma_{\v}$ is bounded in $L^2(0,T;L^{\frac{3}{2}})$.  Then, from \eqref{WFORM_1b} we obtain
\begin{align}
\label{APRI_EQ_22} \norm{\delt\varphi_{\delta}}_{L^2(0,T;(H^1)^*)} + \norm{\div(\varphi_{\delta}\v_{\delta})}_{L^2(0,T; L^{3/2})}\leq C.
\end{align}
Furthermore, we find that the mean value $(\varphi_\delta)_\Omega$ satisfies
\begin{align*}
\abs{\delt  (\varphi_\delta)_{\Omega}} = \frac{1}{\abs{\Omega}} \left | \intO \Gamma_{\varphi}(\varphi_\delta, \sigma_\delta) - \varphi_\delta \Gamma_{\v}(\varphi_\delta, \sigma_\delta) - \nabla \varphi_\delta \cdot \v_\delta \dx \right | \in L^2(0,T),
\end{align*}
and so 
\begin{align}\label{APRI_EQ_23}
\| (\varphi_\delta)_{\Omega} \|_{H^1(0,T)} \leq C.
\end{align}
In particular, by the fundamental theorem of calculus, it holds that
\begin{align}\label{APRI_EQ_24}
\left |(\varphi_\delta)_{\Omega}(r) - (\varphi_\delta)_{\Omega}(s) \right | = \left | \int_s^r \delt (\varphi_\delta)_{\Omega}(t) \dt \right | \leq C |r - s|^{\frac{1}{2}}.
\end{align}

\subsection{Estimates for the mean value of the chemical potential}
\label{sec:mu:mean} 
In order to pass to the limit $\delta \to 0$ rigorously, it remains to derive uniform estimates for $\mu_\delta$, $\beta_{\delta}(\varphi_\delta)$ and $p_\delta$ in $L^2(0,T;L^2)$. To do so we appeal to the method introduced in \cite{GLS}, which involves first deducing that the limit $\varphi$ of $\varphi_\delta$ has mean value in the open interval $(-1,1)$ for all times.
We first state a useful auxiliary result.
\begin{prop}\label{prop:CFM}
	For $\delta \in (0,1)$, let $\beta_{\log,\delta}$ denote the derivative of \eqref{DEF_APPROX_POT_4}.  Then, there exist positive constants $c_1$ and $c_2$ independent of $\delta$ such that 
	\begin{align}\label{beta:log}
	r \beta_{\log,\delta}(r) \geq |\beta_{\log,\delta}(r)| - c_1 |r| - c_2 \quad \forall r \in \R.
	\end{align}
\end{prop}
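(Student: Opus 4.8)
The plan is to reduce \eqref{beta:log} to a single $\delta$-uniform bound by first exploiting the sign structure of $\beta_{\log,\delta}$. From the explicit formulas \eqref{DEF_APPROX_POT_3}--\eqref{DEF_APPROX_POT_4}, $\beta_{\log,\delta}$ is odd (since $\psi_{\log}$ is even) and satisfies $\beta_{\log,\delta}(r)\ge 0$ for $r\ge 0$: indeed $\beta_{\log,\delta}(r)=\tfrac{\theta}{2}\log\tfrac{1+r}{1-r}\ge 0$ on $[0,1-\delta]$, while on $[1-\delta,\infty)$ one has $\beta_{\log,\delta}(r)=\tfrac{\theta}{2}\log\tfrac{2-\delta}{\delta}+\tfrac{\theta}{\delta(2-\delta)}\big(r-(1-\delta)\big)\ge 0$ since $\delta<1$. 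Hence $r$ and $\beta_{\log,\delta}(r)$ always have the same sign, so $r\beta_{\log,\delta}(r)=|r|\,|\beta_{\log,\delta}(r)|$ and therefore
\begin{equation*}
r\beta_{\log,\delta}(r)-|\beta_{\log,\delta}(r)|=(|r|-1)\,|\beta_{\log,\delta}(r)|.
\end{equation*}
For $|r|\ge 1$ the right-hand side is nonnegative, so \eqref{beta:log} holds there for any positive $c_1,c_2$. It thus remains to exhibit a constant $c_2>0$, independent of $\delta\in(0,1)$, with $(1-|r|)\,|\beta_{\log,\delta}(r)|\le c_2$ for $|r|<1$; then \eqref{beta:log} follows with, say, $c_1=1$ and this $c_2$.

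By oddness it suffices to bound $(1-r)\beta_{\log,\delta}(r)$ for $r\in[0,1)$, which I would treat on the regions $[0,1-\delta]$ and $[1-\delta,1)$ separately. On $[0,1-\delta]$, $\beta_{\log,\delta}(r)=\beta_{\log}(r)=\tfrac{\theta}{2}\log\tfrac{1+r}{1-r}$; splitting $(1-r)\log\tfrac{1+r}{1-r}=(1-r)\log(1+r)+(1-r)\log\tfrac{1}{1-r}$ and using $(1-r)\log(1+r)\le\log 2$ together with $\sup_{0<u\le1}(-u\log u)=\tfrac1{e}$ gives the $\delta$-independent bound $(1-r)\beta_{\log,\delta}(r)\le\tfrac{\theta}{2}\big(\log 2+\tfrac1{e}\big)$. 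On $[1-\delta,1)$, using $0\le 1-r\le\delta$, $0\le r-(1-\delta)\le\delta$, together with $\beta_{\log}(1-\delta)=\tfrac{\theta}{2}\log\tfrac{2-\delta}{\delta}$ and $\beta_{\log}'(1-\delta)=\tfrac{\theta}{\delta(2-\delta)}$, one obtains
\begin{equation*}
(1-r)\beta_{\log,\delta}(r)\le\delta\Big(\tfrac{\theta}{2}\log\tfrac{2-\delta}{\delta}+\tfrac{\theta}{2-\delta}\Big)\le\tfrac{\theta}{2}\,\delta\log\tfrac{2}{\delta}+\theta,
\end{equation*}
and since $\delta\mapsto\delta\log(2/\delta)$ is bounded on $(0,1)$ (with supremum $2/e$), this is again a $\delta$-independent bound. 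Taking $c_2$ to be the larger of the two resulting constants completes the argument.

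The only genuinely delicate point is the region $[1-\delta,1)$: there $\beta_{\log,\delta}$ itself blows up like $\delta^{-1}$ as $\delta\to 0$, but this singular growth is exactly compensated by the vanishing prefactor $1-r\le\delta$, so that the product stays of order $\delta\log(1/\delta)+1$ and hence remains bounded uniformly in $\delta$. Everything else — the sign reduction and the $\delta$-free estimate on $[0,1-\delta]$ — is routine.
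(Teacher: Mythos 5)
Your proof is correct, and it even establishes a slightly stronger inequality than \eqref{beta:log}: since you show $(1-|r|)\,|\beta_{\log,\delta}(r)|\leq c_2$ uniformly in $\delta$ on $|r|<1$ and use the sign/oddness identity $r\beta_{\log,\delta}(r)=|r|\,|\beta_{\log,\delta}(r)|$, you obtain $r\beta_{\log,\delta}(r)\geq|\beta_{\log,\delta}(r)|-c_2$ with no need for the term $-c_1|r|$ at all. The paper's proof uses the same basic regional decomposition ($|r|\leq 1-\delta$ versus the quadratic-extension regions), but treats the outer regions differently: there one writes $r\beta_{\log,\delta}(r)\geq\beta_{\log,\delta}(r)-\delta\beta_{\log,\delta}(r)$ for $r\geq 1-\delta$ and bounds $\delta\beta_{\log,\delta}(r)$ by a constant plus a term proportional to $r-(1-\delta)$, which is where the $-c_1|r|$ slack comes from; on the interior it invokes the limits $\lim_{r\to 1^-}(1-r)\beta_{\log}(r)=0$ and $\lim_{r\to(-1)^+}(1+r)\beta_{\log}(r)=0$ rather than your explicit elementary bound via $\sup_{0<u\leq 1}(-u\log u)=1/e$. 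What your route buys is the observation that after the sign reduction the only nontrivial region is $[1-\delta,1)$, where the prefactor $1-r\leq\delta$ exactly compensates the $O(\delta^{-1})$ growth of $\beta_{\log,\delta}$, yielding explicit $\delta$-independent constants and a cleaner statement; the paper's version is marginally shorter on the extension region at the price of carrying the extra $c_1|r|$ term, which is harmless for its later use in \eqref{APRI_EQ_29}.
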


\begin{rem}
	This is more refined than the following estimate from \cite[(2.12)]{CFM}:
	\begin{align}\label{CFM:est}
	r \beta_{\log,\delta}(r) \geq \tilde c_0 |\beta_{\log,\delta}(r)| - \tilde c_1 \quad \forall r \in \R
	\end{align}
	with positive constant $\tilde c_0$ and nonnegative constant $\tilde c_1$ that are independent of $\delta$, provided $\delta$ is sufficiently small, in which the constant $\tilde c_0$ is not quantified.
\end{rem}

\begin{proof}
	From the definition of $\hat \beta_{\log,\delta}$ in \eqref{DEF_APPROX_POT_4}, we infer that for $r \geq 1 - \delta$, $\delta \in (0,1)$,
	\begin{align*}
	 \beta_{\log,\delta}(r) r & \geq \beta_{\log,\delta}(r) - \delta \beta_{\log,\delta}(r) \\
&  = \beta_{\log,\delta}(r) - \tfrac{\theta}{2-\delta}(r - (1-\delta)) - \delta \log \tfrac{2-\delta}{\delta} \\
	&  \geq \beta_{\log,\delta}(r) - \theta (r - (1-\delta)) - c
	\end{align*}
	for some positive constant $c$ independent of $\delta \in (0,1)$.  In a similar fashion, using $\beta_{\log}(\delta-1) < 0$ and $\beta_{\log}'(\delta - 1) > 0$, we have for $r \leq -1 + \delta$,
	\begin{align*}
	\beta_{\log}(\delta-1) r  & \geq - \beta_{\log}(\delta-1) + \delta \beta_{\log}(\delta-1) \geq |\beta_{\log}(\delta-1)| - c, \\
	\beta_{\log}'(\delta-1)(r - (\delta-1)) r & \geq -\beta_{\log}'(\delta-1)(r - (\delta-1)) + \delta \beta_{\log}'(\delta-1)(r - (\delta-1)) \\
	& = |\beta_{\log}'(\delta-1)(r - (\delta-1))| + \tfrac{\theta}{2-\delta}(r - (\delta-1)), 
	\end{align*}
	and when combined this yields
	\begin{align*}
	\beta_{\log,\delta}(r)r & \geq |\beta_{\log}'(\delta-1)(r - (\delta-1))| + |\beta_{\log}(\delta-1)| - \theta |r-(\delta-1)| - c \\
	& \geq |\beta_{\log,\delta}(r)| - \theta |r| - c.
	\end{align*}
	For the remaining case $\abs{r} \leq 1- \delta$, we employ the fact that $\beta_{\log,\delta}(r) = \beta_{\log}(r)$ and
	\begin{align*}
	\lim_{r \to 1^-} (1-r) \beta_{\log}(r) = 0, \quad \lim_{r \to (-1)^+} (r+1) \beta_{\log}(r) = 0
	\end{align*}
	to infer the existence of a constant $c > 0$ independent of $\delta \in (0,1)$ such that
	\begin{align*}
	\beta_{\log}(r) (r - 1) \geq -c \text{ for } 0 \leq r < 1, \quad \beta_{\log}(r)(r+1) \geq - c \text{ for } -1 < r \leq 0.
	\end{align*}
	Hence, for $\delta \in (0,1)$,
	\begin{align*}
	\beta_{\log,\delta}(r) r = \beta_{\log}(r) r \geq |\beta_{\log}(r)| - c = |\beta_{\log,\delta}(r)| - c \quad \forall |r| \leq 1 - \delta.
	\end{align*}
	This completes the proof.
\end{proof}

Now, using reflexive weak compactness arguments (Aubin--Lions theorem) and \cite[Sec.~8, Cor.~4]{Simon}, for $\delta\to 0$ along a non-relabelled subsequence, we infer that
\begin{equation}\label{conv}
\begin{alignedat}{3}
\varphi_{\delta} &\to \varphi &&\quad\text{weakly-}*&&\quad \text{ in }H^1(0,T;(H^1)^*)\cap L^{\infty}(0,T;H^1)\cap L^2(0,T;H^2),\\
\varphi_{\delta} & \to\varphi && \quad \text{strongly} && \quad \text{ in }C^0([0,T];L^r)\cap L^2(0,T;W^{1,r}) \text{ and a.e. in }\Omega_T, \\
\sigma_{\delta} &\to\sigma &&\quad \text{weakly-}*&&\quad \text{ in }L^{\infty}(0,T;H^2),\\
\grad\mu_{\delta} &\to\boldsymbol{\xi} &&\quad\text{weakly}&&\quad\text{ in }L^2(0,T;\LP^2),\\
\v_{\delta} &\to \v &&\quad \text{weakly}&&\quad\text{ in }L^{2}(0,T;\H^1),\\
\div(\varphi_{\delta}\v_{\delta})& \to\theta &&\quad \text{weakly}&&\quad\text{ in } L^2(0,T;L^{\frac{3}{2}}),
\end{alignedat}
\end{equation}
for some limit functions $\boldsymbol{\xi}\in L^2(0,T;\LP^2)$, $\theta\in L^2(0,T;L^{\frac{3}{2}})$ and for all $r\in [1,6)$.  The interpolation inequality $\normh{1}{f} \leq C \norml{2}{f}^{1/2} \normh{2}{f}^{1/2}$, the boundedness of $\varphi_\delta - \varphi$ in $L^2(0,T;H^2)$ and the strong convergence $\varphi_\delta \to \varphi$ in $L^{\infty}(0,T;L^2)$ also allow us to deduce that 
\begin{align}
\label{APRI_EQ_25}\varphi_{\delta}\to\varphi \text{ strongly in }L^4(0,T;H^1).
\end{align}
Let $\lambda \in L^4(0,T;L^3)$ be an arbitrary test function, then \eqref{APRI_EQ_25} implies $\lambda \varphi_\delta \to \lambda \varphi$ strongly in $L^2(0,T;L^2)$ and $\lambda \nabla \varphi_\delta \to \lambda \nabla \varphi$ strongly in $L^2(0,T;L^{\frac{6}{5}})$. Using the weak convergence of $\v_{\delta}\to\v$ in $L^2(0,T;\H^1)$ and the product of weak-strong convergence we obtain
\begin{align}\label{div:id}
\intT\intO \div(\varphi_{\delta}\v_{\delta})\lambda \dx\dt\to \intT\intO \div(\varphi\v)\lambda \dx\dt \quad\text{as }\delta\to 0.
\end{align}
This implies $\div(\varphi_{\delta}\v_{\delta})\to\div(\varphi\v)$ weakly in $L^{\frac{4}{3}}(0,T;L^{\frac{3}{2}})$ as $\delta\to 0$. Since $L^2(0,T;L^{\frac{3}{2}})\subset L^{\frac{4}{3}}(0,T;L^{\frac{3}{2}})$, by uniqueness of weak limits we obtain $\div(\varphi\v) = \theta$. Using the assumption on $\Gamma_{\varphi}$ and the above convergences, we can pass to the limit in \eqref{WFORM_1b} to obtain
\begin{align}
\label{APRI_EQ_26} \inn{\delt \varphi}{\zeta}_{H^1} + \intO \div(\varphi\v)\zeta \dx = \intO -\boldsymbol{\xi}\cdot\grad \zeta + \Gamma_{\varphi}(\varphi,\sigma)\zeta \dx
\end{align}
for a.e.~$t\in (0,T)$ and for all $\zeta \in H^1$.   Technically, one would multiply \eqref{WFORM_1b} with a function $\kappa \in C^{\infty}_c(0,T)$, integrate the resulting product over $(0,T)$, pass to the limit $\delta \to 0$ and then recover \eqref{APRI_EQ_26} with the fundamental lemma of calculus of variations.  

Now, for the obstacle potential, the uniform boundedness of $\psi_{\delta}(\varphi_\delta)$ in $L^1(0,T;L^1)$ and \eqref{PROP_PSIDELTA_3} imply $\sqrt{\delta} \beta_{\Do,\delta}(\varphi_\delta)$ is uniformly bounded in $L^2(0,T;L^2)$, and so $\delta \beta_{\Do,\delta}(\varphi_\delta) \to 0$ strongly in $L^2(0,T;L^2)$. 
However, from the definition of $\beta_{\Do,\delta}$ we have
\begin{align}\label{del:beta}
\delta \beta_{\Do,\delta}(r) \to g(r) := \begin{cases}
r-1 & \text{ if } r \geq 1,\\
0 & \text{ if } \abs{r} \leq 1, \\
r+1 & \text{ if } r \leq -1,
\end{cases}\quad \text{as } \delta \to 0,
\end{align}
which implies that (c.f.~\cite[Proof of Thm. 2.2]{BE})
\begin{align}
\label{APRI_EQ_27} |\varphi| \leq 1 \quad\text{a.e. in } \Omega_T.
\end{align}
For the logarithmic potential, we use \eqref{PROP_PSIDELTA_LOG_1a} and the uniform boundedness of $\hat{\beta}_{\log,\delta}(\varphi_\delta)$ in $L^1(0,T;L^1)$ to obtain that
\begin{equation*}
\intOT (|\varphi_{\delta}|-1)_{+}^2\dx\dt\leq C\delta.
\end{equation*}
Since $\varphi_\delta\to\varphi$ a.e.~in $\Omega_T$ and strongly in $L^2(0,T;L^2)$, passing to the limit $\delta\to 0$ in the inequality above also implies \eqref{APRI_EQ_27}.
From this we claim that $\varphi_{\Omega}(t) \in (-1,1)$ for all $t \in (0,T)$.  Indeed, choosing $\zeta = 1$ in \eqref{APRI_EQ_26} leads to
\begin{align}
\label{APRI_EQ_28}\inn{\delt \varphi}{1}_{H^1}+ \intO \grad\varphi\cdot \v\dx = \intO \Gamma_{\varphi}(\varphi,\sigma)-\varphi\Gamma_{\v}(\varphi,\sigma)\dx \quad\text{for a.e.~}t\in (0,T).
\end{align}
Suppose to the contrary there exists a time $t_*\in (0,T)$ such that $\varphi_{\Omega}(t_*) = 1$ and \eqref{APRI_EQ_28} holds. Due to \eqref{APRI_EQ_27}, this implies $\varphi(t_*,x)\equiv 1$ a.e.~in $\Omega$ and thus $\grad \varphi(t_*,x)\equiv 0$ a.e.~in $\Omega$. Using \eqref{APRI_EQ_28} and \eqref{ASS_SOURCE_1}-\eqref{ASS_SOURCE_2}, we obtain
\begin{equation*}
\inn{\delt \varphi(t_*)}{1}_{H^1} = \intO f_{\varphi}(1)-f_{\v}(1) \dx<0.
\end{equation*}
Hence, by continuity of $t \mapsto (\varphi(t))_\Omega$, the mean value $(\varphi(t))_\Omega$ must be strictly decreasing in a neighbourhood of $t_*$, i.e., $(\varphi(t))_\Omega>1$ for $t< t_*$, which contradicts \eqref{APRI_EQ_27}. Using a similar argument and the assumption $f_{\varphi}(-1)+f_{\v}(-1)>0$ leads to the conclusion that $(\varphi(t))_\Omega > -1$ for all $t \in (0,T)$.  In particular, $(\varphi(t))_\Omega \in (-1,1)$ for all $t\in (0,T)$.  

This allows us to derive uniform estimates on the mean value of $\mu_\delta$. 
Integrating \eqref{WFORM_1c_new} and taking the modulus on both sides gives 
\begin{align}\label{mean:mu:1}
\left|\intO \mu_{\delta}(t)\dx\right| \leq \intO |\beta_{\delta}(\varphi_{\delta}(t))| + \Theta_c|\varphi_\delta(t)| + \chi |\sigma_\delta(t)| \dx
\end{align}
for a.e.~$t \in (0,T)$.  Using \eqref{beta:log} and the fact 
\begin{align}\label{DO:fact}
|\beta_{\Do,\delta}(r)|\leq r \beta_{\Do,\delta}(r) \quad \text{ for all } r\in\R
\end{align}
(which unfortunately does not hold for $\beta_{\log,\delta}$, hence the necessity of Proposition \ref{prop:CFM}), we deduce that 
\begin{align*}
|\beta_{\delta}(r)| \leq r \beta_{\delta}(r) + c_1 |r| + c_2  \quad \text{ for all } r \in \R,
\end{align*}
and so, we arrive at
\begin{equation}\label{APRI_EQ_29}
\begin{aligned}
\left|\intO \mu_{\delta}(t)\dx\right| & \leq \intO |\beta_{\delta}(\varphi_\delta(t))| + \Theta_c|\varphi_\delta(t)| + \chi |\sigma_\delta(t)|\dx,  \\
& \leq 
 \intO \varphi_\delta(t) \beta_{\delta}(\varphi_\delta(t)) + (c_1 + \Theta_c)|\varphi_\delta(t)| + \chi |\sigma_\delta(t)| + c_2 \dx.
\end{aligned}
\end{equation}
Together with the following equality obtained from testing \eqref{WFORM_1c_new} with $\zeta = \varphi_{\delta}$,
\begin{align*}
\normL{2}{\grad\varphi_{\delta}(t)}^2 + \intO \beta_{\delta}(\varphi_{\delta}(t))\varphi_{\delta}(t) - \chi\sigma_{\delta}(t)\varphi_{\delta}(t)\dx = \intO \mu_{\delta}(t)\varphi_{\delta}(t) + \Theta_c|\varphi_{\delta}(t)|^2\dx
\end{align*}
we see that
\begin{equation}\label{mu:mean:1}
\begin{aligned}
\left|\intO \mu_{\delta}(t) \dx\right| \leq \int_\Omega \mu_\delta(t) \varphi_\delta(t) \dx + C \left ( 1 + \norml{2}{\varphi_\delta(t)}^2 +\norml{2}{\sigma_\delta(t)}^2 \right).
\end{aligned}
\end{equation}
Now, let $f_\delta\in H_n^2\cap L_0^2$ be the unique solution to the Neumann problem
\begin{equation}\label{DO_f_delta}
\begin{cases}
-\lap f_{\delta} = \varphi_{\delta}(t)-(\varphi_{\delta}(t))_\Omega & \text{ in }\Omega,\\
\deln f_{\delta} = 0 & \text{ on }\Sigma,
\end{cases}
\end{equation}
where by Poincar\'e's inequality, we have
\begin{equation}
\label{APRI_EQ_30}\normh{1}{f_{\delta}} \leq C\normL{2}{\grad\varphi_{\delta}(t)}.
\end{equation}
Testing \eqref{WFORM_1b_new} with $f_{\delta}$, integrating by parts and rearranging yields
\begin{align*}
\intO \mu_{\delta}(t)\varphi_{\delta}(t)\dx &= -\inn{\delt\varphi_{\delta}(t)}{f_{\delta}}_{H^1} - \intO \big(\div(\varphi_{\delta}(t)\v_{\delta}(t))  -\Gamma_{\varphi}(\varphi_{\delta}(t),\sigma_{\delta}(t))\big)f_{\delta}\dx\\
&\quad + ((\varphi_{\delta}(t))_\Omega-(\varphi(t))_\Omega + (\varphi(t))_\Omega )\intO \mu_{\delta}(t)\dx.
\end{align*}
Plugging in this identity into \eqref{mu:mean:1} and rearranging again, we deduce that
\begin{equation}\label{APRI_EQ_31}
\begin{aligned}
&\left(1-|(\varphi(t))_\Omega|-\sup_{t\in (0,T)}|(\varphi_{\delta}(t)-\varphi(t))_\Omega|\right)\left|\intO \mu_{\delta}(t)\dx\right|\\
&\qquad \leq   C \left ( \norml{2}{\sigma_{\delta}(t)}^2 + \norml{2}{\varphi_{\delta}(t)}^2 \right )  -\inn{\delt\varphi_{\delta}(t)}{f_{\delta}}_{H^1} \\
&\quad \qquad - \intO \big(\div(\varphi_{\delta}(t)\v_{\delta}(t)) - \Gamma_{\varphi}(\varphi_{\delta}(t),\sigma_{\delta}(t))\big)f_{\delta}\dx
\end{aligned}
\end{equation}
for a.e.~$t\in (0,T)$. Recalling \eqref{APRI_EQ_23}-\eqref{APRI_EQ_24}, we have the equiboundedness and equicontinuity of $\{ (\varphi_\delta)_\Omega\}_{\delta \in (0,1)}$ so that by the Arzel\`a--Ascoli theorem,
\begin{align*}
(\varphi_\delta(t))_\Omega \to (\varphi(t))_\Omega\quad \text{strongly in } C^0([0,T])\quad \text{as } \delta \to 0.
\end{align*}
along a non-relabelled subsequence.  Then, one can find an index $\delta_3 \in (0,1)$ such that for all $\delta < \min(\delta_3, \delta_*) =: \delta_4$, where $\delta_*$ is defined after \eqref{APRI_EQ_19},
\begin{align*}
\sup_{t \in (0,T)} |(\varphi_\delta(t) - \varphi(t))_\Omega| \leq \tfrac{1}{2} \sup_{t \in (0,T)} (1 - |(\varphi(t))_\Omega|).
\end{align*}
Since $|(\varphi(t))_\Omega|<1$ for all $t\in (0,T)$ and $(\varphi)_\Omega$ is continuous on $[0,T]$, the prefactor on the left-hand side of \eqref{APRI_EQ_31} is bounded away from 0 uniformly in $t$.  As the right-hand side of \eqref{APRI_EQ_31} is uniformly bounded in $L^2(0,T)$ by previously established uniform estimates, we obtain $\{(\mu_\delta)_\Omega\}_{\delta \in (0,\delta_4)}$ is bounded in $L^2(0,T)$, and the Poincar\'e inequality gives
\begin{align}
\label{APRI_EQ_33}\norm{\mu_{\delta}}_{L^2(0,T;L^2)}\leq C.
\end{align}
Let us mention that if instead of \eqref{beta:log} we employ the less refined estimate \eqref{CFM:est}, we arrive at 
\begin{align*}
\varphi_\delta \beta_\delta(\varphi_\delta) \geq \min(1, \tilde c_0)|\beta_\delta(\varphi_\delta)| - \tilde c_1
\end{align*}
and ultimately
\begin{align*}
\left ( 1 - \frac{|(\varphi(t))_\Omega|}{\min(1,\tilde c_0)} \right ) \left | \intO \mu_\delta(t) \dx \right | \leq C.
\end{align*}
Since $\tilde c_0$ is not quantified, we may not be able to rule out the situation where $\tilde c_0 < 1$, which may imply that the prefactor $1 - \tfrac{|(\varphi(t))_\Omega|}{\min(1,\tilde c_0)}$ is negative.

The uniform estimate \eqref{APRI_EQ_33} for $\mu_\delta$ allows us to infer further estimates for $\beta_{\delta}(\varphi_\delta)$ and $p_\delta$.   Indeed, testing \eqref{WFORM_1c_new} with $\beta_{\delta}(\varphi_{\delta})$ yields
\begin{align*}
\norml{2}{\beta_{\delta}(\varphi_{\delta})}^2 + \intO \beta_{\delta}'(\varphi_{\delta})|\grad\varphi_{\delta}|^2\dx = \intO (\varphi_{\delta}+\mu_{\delta}+\chi\sigma_{\delta}) \beta_{\delta}(\varphi_{\delta})\dx.
\end{align*}
Integrating this identity in time from $0$ to $T$, using the nonnegativity of $\beta_{\delta}'(\cdot)$, \eqref{APRI_EQ_20} and \eqref{APRI_EQ_33}, it follows that
\begin{align}\label{APRI_EQ_34}
\norm{\beta_{\delta}(\varphi_\delta)}_{L^2(0,T;L^2)} \leq C.
\end{align}
For the pressure $p_\delta$, we invoke Lemma \ref{LEM_DIVEQU} to deduce the existence of $\q_\delta := \DD(p_\delta) \in \H^1$ 
such that for a positive constant $C$ depending only on $\Omega$,
\begin{align}\label{APRI_EQ_35}
\norm{\q_\delta}_{\H^1} \leq C \norm{p_\delta}_{L^2}.
\end{align}
Then, testing \eqref{WFORM_1a_new} with $\boldsymbol{\Phi} = \q_\delta$ yields
\begin{align*}
\norm{p_\delta}_{L^2}^2 & \leq  2  \sqrt{\eta_1} \norm{\sqrt{\eta(\varphi_\delta)} \D \v_\delta}_{\LP^2} \norm{\D \q_\delta}_{\LP^2} + \lambda_1 \norm{\Gamma_{\v}(\varphi_\delta, \sigma_\delta)}_{L^2} \norm{p_\delta}_{L^2} \\
& \quad + \nu \norm{\v_\delta}_{\LP^2} \norm{\q_\delta}_{\LP^2} + \norm{(\mu_\delta + \chi \sigma_\delta)}_{L^3} \norm{\nabla \varphi_\delta}_{\LP^2} \norm{\q_\delta}_{\LP^6}.
\end{align*}
Applying Young's inequality and using the uniform estimates \eqref{APRI_EQ_4}, \eqref{APRI_EQ_20}, \eqref{APRI_EQ_33} and \eqref{APRI_EQ_35} leads to
\begin{align}\label{APRI_EQ_36}
\norm{p_\delta}_{L^2(0,T;L^2)} \leq C.
\end{align}

\subsection{Passing to the limit}
Let us first consider the double obstacle case.  In addition to the convergence statements in \eqref{conv}, we further obtain
\begin{alignat*}{3}
\mu_{\delta} &\to\mu &&\quad\text{weakly}&&\quad\text{in }L^2(0,T;H^1),\\
\beta_{\Do,\delta}(\varphi_{\delta})&\to\tau&&\quad \text{weakly}&&\quad\text{in }L^2(0,T;L^2), \\
p_{\delta} &\to p &&\quad \text{weakly}&&\quad\text{in }L^2(0,T;L^2),
\end{alignat*}
for some limit function $\tau\in L^2(0,T;L^2)$.  Moreover, due to \eqref{APRI_EQ_33} we have $\boldsymbol{\xi} = \grad\mu$, which allows us to fully recover \eqref{WFORM_1b} in the limit.  To obtain \eqref{WFORM_1a} and \eqref{WFORM_1d} in the limit we refer the reader to similar arguments as outlined in \cite{EG_jde}, while the divergence equation \eqref{MEQ_1} can be obtained by similar arguments in \cite{EG2}.  It remains to show \eqref{subdiff} is recovered in the limit $\delta \to 0$ from \eqref{WFORM_1c_new}.  By arguing as in \cite[Sec.~5.2]{GLdcdc}, using the weak convergence $\beta_{\Do,\delta}(\varphi_\delta) \rightharpoonup \tau$ in $L^2(0,T;L^2)$, the strong convergence $\varphi_\delta \to \varphi$ in $L^2(0,T;L^2)$, and the maximal monotonicity of the subdifferential $\del \mathbb{I}_{[-1,1]}$ we can infer that $\tau$ is an element of the set $\del \mathbb{I}_{[-1,1]}(\varphi)$, which implies that for any $\zeta \in \KK$ and a.e.~$t \in (0,T)$,
\begin{align*}
\intO \tau(t) (\zeta - \varphi(t)) \dx \leq 0.
\end{align*}
Hence, testing \eqref{WFORM_1c_new} (where $\psi_\delta = \psi_{\Do,\delta}$) with $\zeta - \varphi$ and passing to the limit $\delta \to 0$ allows us to recover \eqref{subdiff}.  This completes the proof of Theorem \ref{thm:timedep} for the double obstacle potential.

For the logarithmic case, the uniform estimate for $\beta_{\log,\delta}(\varphi_\delta)$ in $L^2(0,T;L^2)$ allow us to infer, by the arguments in \cite[Sec.~4]{CFM} or \cite[Sec.~3.3]{GGW}, that the limit $\varphi$ satisfies the tighter bounds 
\begin{align*}
|\varphi(x,t)| < 1 \quad\text{a.e.~in } \Omega_T.
\end{align*}
Furthermore, by the a.e.~convergence of $\varphi_\delta$ to $\varphi$ we have $\beta_{\log,\delta}(\varphi_\delta) \to \beta_{\log}(\varphi)$ a.e.~in $\Omega_T$.   Hence, in the limit $\delta \to 0$, we can recover \eqref{log:weak}.
Meanwhile, the inequality \eqref{log:energy} is obtained from integrating \eqref{APRI_EQ_8} over $(0,t)$ for $t \in (0,T)$ and then passing to the limit with the compactness assertions \eqref{conv}, weak lower semicontinuity, and Fatou's lemma.  This completes the proof of Theorem \ref{thm:timedep} for the logarithmic potential.


\section{Proof of Theorem \ref{thm:stat} -- Stationary solutions}\label{sec:stat}
As with the time-dependent case, we extend $b_\v, b_\varphi, f_\v$ and $f_\varphi$ from $[-1,1]$ to $\R$ such that $f_{\varphi} \in C^0(\R) \cap L^{\infty}(\R)$, $f_{\v} \in C^1(\R)\cap W^{1,\infty}(\R)$, $b_{\varphi} \in C^0(\R) \cap L^{\infty}(\R)$ is nonnegative, $b_{\v} \in C^1(\R) \cap W^{1,\infty}(\R)$ is nonnegative, and fulfill \eqref{ASS_SOURCE_1}, \eqref{ASS_SOURCE_2} and \eqref{ass:log:f}.   

\subsection{Approximation scheme}
We consider a smooth function $\hat g :\R \to [0,1]$ such that $\hat g(r) = 1$ for $r \geq 3$ and $\hat g(r) = 0$ for $r \leq 2$, and define $F: L^2(\Omega) \to \R$ as
\begin{align*}
F(v) := C_F \hat g \big ( \tfrac{1}{\abs{\Omega}} \norml{2}{v}^2 \big ) \quad \text{ for } v \in L^2(\Omega),
\end{align*}
where $C_F$ is a positive constant to be specified later.  Furthermore, we denote by $\gamma(r,s)$ the function
\begin{align*}
\gamma(r,s) :=  r \Gamma_{\v}(r,s) - \Gamma_{\varphi}(r,s),
\end{align*}
and introduce, for $\delta \in (0,1)$, the regular cutoff operator $\Td \in C^{1,1}(\R)$ defined as
\begin{align}
\Td(s) = \begin{cases}
1-\tfrac{3 \delta}{4} & \text{ if } s \geq 1 - \delta, \\
1 - \tfrac{3\delta}{4} + \tfrac{1}{\delta}(s - (1-\delta))^2 & \text{ if } 1-\tfrac{\delta}{2} \leq s \leq 1 - \delta, \\
s & \text{ if } \abs{s} \leq 1 - \tfrac{\delta}{2}, \\
\tfrac{3 \delta}{4} -1 - \tfrac{1}{\delta} ((\delta - 1) - s)^2 & \text{ if } -1 +\delta \leq s \leq -1 + \tfrac{\delta}{2}, \\
\tfrac{3 \delta}{4} - 1 & \text{ if } s \leq -1 + \delta.
\end{cases}
\end{align}
It is clear that $\Td$ and its Lipschitz continuous derivative $\Td'$ are bounded independently of $\delta$.  Then, we seek a solution $\varphi_\delta \in W := H^2_n$ to the approximate system
\begin{align}\label{stat:approx}
\begin{cases}
 \sqrt{\delta} \beta_\delta(\varphi_\delta) + F(\varphi_\delta) \varphi_\delta +  \Delta (\Delta \varphi_\delta - \psi_\delta'(\varphi_\delta) + \chi \sigma_{\delta}) & \\
 \quad  = - \gamma(\varphi_\delta, \sigma_{\delta}) - \v_{\delta} \cdot \nabla \Td(\varphi_\delta) & \text{ in } \Omega, \\
\deln \varphi_\delta = \deln(\Delta \varphi_\delta + \chi \sigma_{\varphi_\delta}) = 0 & \text{ on } \Sigma,
\end{cases}
\end{align}
where $\sigma_{\delta} \in H^2$ is the unique nonnegative and bounded solution to the nutrient subsystem
\begin{align}\label{stat:nut}
\begin{cases}
0 =  \lap \sigma_{\delta} - h(\varphi_\delta) \sigma_{\delta} & \text{ in } \Omega, \\
\deln \sigma_{\delta} = K(1-\sigma_{\delta}) & \text{ on } \Sigma,
\end{cases}
\end{align}
and $\v_{\delta}$ is the first component of the unique solution $(\v_{\delta}, p_{\delta})$ to the Brinkman subsystem 
\begin{align}\label{stat:Brink}
\begin{cases}
- \div ( \T(\v_{\delta}, p_{\delta})) + \nu \v_{\delta}  = (\psi_\delta'(\varphi_\delta) - \lap \varphi_\delta) \nabla \Td(\varphi_\delta) & \text{ in } \Omega, \\
\div(\v_{\delta}) = \Gamma_{\v}(\varphi_\delta, \sigma_{\delta}) & \text{ in } \Omega, \\
\T(\v_{\delta}, p_{\delta}) \n = (2 \eta(\varphi_\delta) \D \v_{\delta} + \lambda(\varphi_\delta) \div(\v_{\delta}) \I - p_{\delta} \I)\n = \0  &\text{ on } \Sigma.
\end{cases}
\end{align}

We aim to use pseudomonotone operator theory, akin to the methodology used in \cite{GLS}, to deduce the existence of at least one solution $\varphi_\delta \in W$ to \eqref{stat:approx} for each $\delta \in (0,1)$.  Then, we derive enough uniform estimates to pass to the limit $\delta \to 0$ in order to prove Theorem \ref{thm:stat}.  The new element in the analysis is how we treat the convection term.


\subsection{Preparatory results}
For fixed $u \in W := H^2_n$ it is clear that there is a unique solution $\sigma_u \in H^2$ to the nutrient subsystem \eqref{stat:nut}, and
\begin{align}\label{stat:s:cts}
\norm{\sigma_{u_1} - \sigma_{u_2}}_{H^2} \leq C \norm{u_1 - u_2}_{L^2}
\end{align}
for any pair $u_1,u_2 \in W$ with corresponding solutions $\sigma_{u_1}, \sigma_{u_2}$.  For the Brinkman subsystem \eqref{stat:Brink} we have the following.

\begin{lem}\label{lem:stat:Brink}
For fixed $u \in W$ and $\delta \in (0,1)$, there exists a unique strong solution $(\v_u, p_u) \in \W^{2,\frac{5}{4}} \times W^{1,\frac{5}{4}}$ to the Brinkman subsystem \eqref{stat:Brink} with $\varphi_\delta$ replaced by $u$.
\end{lem}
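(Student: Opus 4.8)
The plan is to recognise the Brinkman subsystem \eqref{stat:Brink}, with $\varphi_\delta$ replaced by $u$, as a particular instance of the abstract system \eqref{BM_SUBSY} treated in Lemma \ref{lem:Brink}, and then to verify that its data fall within the hypotheses there with the exponents $q = \tfrac54$ and $r = 6$. Concretely, I would set $c := u$, the forcing $\f := (\psi_\delta'(u) - \lap u)\,\nabla \Td(u)$, the divergence datum $g := \Gamma_{\v}(u,\sigma_u) = b_{\v}(u)\sigma_u + f_{\v}(u)$, and the boundary datum $\bh := \0$, where $\sigma_u \in H^2$ is the unique nonnegative bounded solution of the nutrient subsystem \eqref{stat:nut} constructed just above (so that \eqref{stat:s:cts} is available). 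Since $\tfrac54 \leq 6$, $\tfrac54 > 1$ and $\tfrac54 \geq \tfrac65$, the choice $q = \tfrac54$ is admissible in Lemma \ref{lem:Brink} for both $d = 2$ and $d = 3$.

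First I would dispatch the structural hypotheses: the viscosities $\eta,\lambda$ satisfy \eqref{ass:visc} by the standing assumption, and since $d \in \{2,3\}$ the Sobolev embedding $H^2 \hookrightarrow W^{1,6}\cap L^\infty$ gives $c = u \in W^{1,6}$ with $6 > d$, so the requirement $c \in W^{1,r}$ with $r > d$ holds with $r = 6$. Next I would check the integrability of the data. For each fixed $\delta \in (0,1)$ the approximating derivative $\psi_\delta'$ is globally Lipschitz — its derivative is bounded by a ($\delta$-dependent) constant, cf.\ Propositions \ref{prop:DO:prop} and \ref{prop:Log:prop} — hence $\psi_\delta'(u) \in L^\infty(\Omega)$ by $H^2 \hookrightarrow L^\infty$, and therefore $\psi_\delta'(u) - \lap u \in L^2(\Omega)$ because $\lap u \in L^2$. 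Since $\Td'$ is bounded independently of $\delta$ and $\nabla u \in \H^1 \hookrightarrow \LP^6$, Hölder's inequality with $\tfrac{1}{3/2} = \tfrac12 + \tfrac16$ yields $\f = (\psi_\delta'(u) - \lap u)\,\Td'(u)\nabla u \in \LP^{3/2} \hookrightarrow \LP^{5/4}$ on the bounded domain $\Omega$. For the divergence datum, $b_{\v}, f_{\v} \in C^1(\R)\cap W^{1,\infty}(\R)$ together with $\sigma_u \in H^2 \hookrightarrow W^{1,6}\cap L^\infty$ and $\nabla u \in \LP^6$ give, via the product and chain rules, $g \in W^{1,6}\hookrightarrow W^{1,5/4}$; and $\bh = \0$ lies trivially in $\W^{1-4/5,\,5/4}(\Sigma)$.

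With all hypotheses verified, an application of Lemma \ref{lem:Brink} with $q = \tfrac54$ and $r = 6$ produces a unique pair $(\v_u, p_u) \in \W^{2,5/4}\times W^{1,5/4}$ solving \eqref{stat:Brink} (with $\varphi_\delta$ replaced by $u$) together with the estimate \eqref{Brink:strong:est}, which is exactly the assertion. The only genuinely delicate point is the membership $(\psi_\delta'(u) - \lap u)\,\nabla\Td(u) \in \LP^{5/4}$: it relies on the embedding $H^2 \hookrightarrow L^\infty$, valid precisely because $d \leq 3$, to control $\psi_\delta'(u)$, combined with the Hölder pairing of $\lap u \in L^2$ with $\nabla u \in \LP^6$. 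I would emphasise that $\delta$ is fixed throughout this lemma, so the degeneration of the Lipschitz constant of $\psi_\delta'$ as $\delta \to 0$ is immaterial here; obtaining bounds uniform in $\delta$ is a separate matter, handled later in the limit passage.
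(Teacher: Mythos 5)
Your proposal is correct, and its existence part is exactly the paper's argument: the paper likewise invokes Lemma \ref{lem:Brink} with $c=u$, $\f=(\psi_\delta'(u)-\lap u)\nabla\Td(u)\in\LP^{5/4}$, $g=\Gamma_{\v}(u,\sigma_u)$ and $\bh=\0$, and your verification of the hypotheses (with $r=6$, $q=5/4$, $H^2\hookrightarrow W^{1,6}\cap L^\infty$) is sound. Where you genuinely diverge is the uniqueness part, which is where the paper spends most of its proof. You settle it by noting that for fixed $u$ the coefficients $\eta(u),\lambda(u)$ and all data are determined by $u$ (recall $\sigma_u$ is unique), so the uniqueness clause of Lemma \ref{lem:Brink} applies verbatim; this is a legitimate proof of the lemma as literally stated. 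The paper, however---see the remark immediately following the lemma, which reads Lemma \ref{lem:Brink} as giving uniqueness only with respect to $(\f,g,\bh)$ for a fixed $c$---proves uniqueness instead by comparing the solutions $(\v_1,p_1)$, $(\v_2,p_2)$ associated with two phase fields $u_1,u_2$: testing the difference of the momentum equations with $\hat\v-\hat\w$, where $\hat\w=\DD(\Gamma_{\v}(u_1,\sigma_{u_1})-\Gamma_{\v}(u_2,\sigma_{u_2}))$, and then with a divergence lift of $\hat p$, it obtains the Lipschitz bounds $\normH{1}{\v_1-\v_2}+\norml{2}{p_1-p_2}\leq C\normh{2}{u_1-u_2}$. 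What this buys, beyond uniqueness at fixed $u$, is quantitative stability of the maps $u\mapsto\v_u$ and $u\mapsto p_u$, which does \emph{not} follow from Lemma \ref{lem:Brink} alone, since there both the viscosity coefficients and the data vary with $u$ and the constant in \eqref{Brink:strong:est} depends on $\normw{1}{r}{c}$. So your route is shorter and fully covers the stated claim; the paper's heavier comparison argument additionally records the continuous dependence on $u$ that underpins the solution operator used in the subsequent pseudomonotone-operator construction. If one adopts the authors' stricter reading of ``uniqueness'' as including this stability of the solution map, you would still need their comparison estimate.
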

Let us mention Lemma \ref{lem:Brink} gives the strong existence but not the uniqueness assertion, since in \eqref{BM_SUBSY} the phase field variable $c$ is fixed and not related to the data $\f$, $g$ and $\bh$.  Hence, Lemma \ref{lem:Brink} gives uniqueness of $(\v,p)$ only with respect to $(\f, g, \bh)$.

\begin{proof}
For the existence part, we invoke Lemma \ref{lem:Brink} with $u = c \in W$, $g = \Gamma_{\v}(u, \sigma_u) \in H^1$, $\f = (\psi_\delta(u) - \lap u) \nabla \Td(u) \in \LP^{\frac{5}{4}}$, and $\bh = \0$.  

Let $(\v_1, p_1), (\v_2, p_2) \in \W^{2,\frac{5}{4}} \times W^{1,\frac{5}{4}}$ be the solutions to \eqref{stat:Brink} corresponding to data $u_1, u_2 \in W$, respectively.  For convenience we write $\hat \v := \v_1 - \v_2$, $\hat p = p_1 - p_2$, $\hat u = u_1 - u_2$, $\hat \eta = \eta(u_1) - \eta(u_2)$, $\hat{\psi}'_\delta = \psi_\delta'(u_1) - \psi_\delta'(u_2)$, $\hat{\Td'} = \Td'(u_1) - \Td'(u_2)$ and $\hat{\Gamma}_\v := \Gamma_\v(u_1, \sigma_{u_1}) - \Gamma_{\v}(u_2, \sigma_{u_2})$.  Let $\hat \w:= \DD(\hat{\Gamma}_\v) \in \H^1$ 
where by \eqref{stat:s:cts} it holds that 
\begin{align}\label{stat:div:cts}
\normH{1}{\w} \leq C \norml{2}{\hat \Gamma_{\v}} \leq C \norml{2}{\hat u}.
\end{align}
Then, testing the difference of $\eqref{stat:Brink}_1$ with $\hat \v - \hat \w$ yields
\begin{align*}
& \int_\Omega \nu \abs{\hat \v}^2 - \nu \hat \v \cdot \hat \w + 2\eta(u_1) (\abs{\D \hat \v}^2 - \D \hat \v : \D \hat \w) + 2 \hat \eta \D \v_{2} : \D (\hat \v - \hat \w) \dx \\
& \quad = \int_\Omega (\hat \v - \hat \w) \cdot \Big ( (\hat{\psi}_\delta' - \lap \hat u ) \nabla \Td(u_1) + (\psi_\delta'(u_2) - \lap u_2) (\Td'(u_1) \nabla \hat u + \hat{\Td'}\nabla u_2 ) \Big ) \dx.
\end{align*}
The left-hand side can be bounded below by
\begin{align}\label{stat:B:cts:L}
\text{LHS} \geq \frac{\nu}{2} \normL{2}{\hat \v}^2 + \frac{\eta_0}{2} \normL{2}{\D \hat \v}^2 - \frac{\nu}{2} \normL{2}{\hat \w}^2 -C \normL{2}{\D \hat \w}^2 - C \normL{2}{\D \v_2}^2 \norml{\infty}{\hat u}^2,
\end{align}
while the right-hand side can be bounded above by
\begin{align*}
\text{RHS}  &\leq C\normH{1}{\hat \v - \hat \w} \normL{3}{\nabla u_1}\big ( \norml{2}{\lap \hat u} + \norml{2}{\hat \psi_\delta'} \big ) \\
& \quad + C \normH{1}{\hat \v - \hat \w}  \big ( \norml{2}{\lap u_2} + \norml{2}{\psi_\delta'(u_2)} \big ) \big (\normL{3}{\nabla \hat u} + \normL{3}{\hat{\Td'} \nabla u_2} \big ).
\end{align*}
In light of the regularities $u_1, u_2 \in W$ and $\v_1, \v_2 \in \W^{2,\frac{5}{4}}$, the polynomial growth of $\psi_\delta$ leading to the following difference estimate
\begin{align*}
\abs{\psi_\delta'(s) - \psi_\delta'(t)} \leq C_\delta (1 + \abs{s} + \abs{t} ) \abs{s-t} \quad \forall s, t \in \R,
\end{align*}
the Lipschitz continuity and boundedness of $\Td'$, as well as \eqref{stat:div:cts}, we find that 
\begin{align*}
\text{RHS} \leq C \normH{1}{\hat \v -\hat \w} \big ( \normh{2}{\hat u} + \normL{6}{\nabla u_2} \norml{6}{\hat{\Td'}} \big ) \leq C \normh{2}{\hat u}^2 + \eps \normH{1}{\hat v}^2
\end{align*}
for some small constant $\eps > 0$.  Invoking Korn's inequality and \eqref{stat:div:cts} in \eqref{stat:B:cts:L} and combining with the above estimate gives
\begin{align*}
\normH{1}{\hat \v}^2 \leq C \normh{2}{\hat u}^2.
\end{align*}
This yields uniqueness of the mapping $u \mapsto \v_u$.  For the pressure, we test the difference of $\eqref{stat:Brink}_1$ with $\hat{\q} := \DD(\hat p) \in \H^1$ which satisfies 
\begin{align*}
\normH{1}{\hat \q} \leq C \norml{2}{\hat p},
\end{align*}
and in turn we obtain
\begin{align*}
\norml{2}{\hat p}^2 \leq C \big ( \normH{1}{\hat \v} + \normh{2}{\hat u} \big ) \normH{1}{\hat \q} \leq \tfrac{1}{2} \norml{2}{\hat p}^2 + C \normh{2}{\hat u}^2,
\end{align*}
which yields the uniqueness of the mapping $u \mapsto p_u$.
\end{proof}

The unique solvability of the nutrient and Brinkman subsystems in turn yields the following result.

\begin{lem}\label{lem:calA}
For each $\delta \in (0,1)$, the operator $\mathcal{A} : W \to W^*$ defined as
\begin{align*}
\inn{\mathcal{A}u}{\zeta}_{W} := \intO \big ( \gamma(u, \sigma_u) + \v_u \cdot \nabla \Td(u) + \chi \lap \sigma_u \big ) \zeta  \dx
\end{align*}
is strongly continuous, i.e., if $u_n \rightharpoonup u$ in $W$, then $\mathcal{A}u_n \to \mathcal{A}u$ in $W^*$.
\end{lem}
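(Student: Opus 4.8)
The plan is to fix $u_n \rightharpoonup u$ in $W = H^2_n$ and, since the limit $\mathcal{A}u \in W^*$ is fixed, to reduce to showing that \emph{every} subsequence of $(u_n)$ has a further subsequence along which $\mathcal{A}u_n \to \mathcal{A}u$ in $W^*$. Passing to such a subsequence and using the compact embeddings $H^2 \hookrightarrow\hookrightarrow C^0(\overline{\Omega})$ and $H^2 \hookrightarrow\hookrightarrow W^{1,3}$, valid for $d \in \{2,3\}$, I may assume in addition that $u_n \to u$ strongly in $C^0(\overline{\Omega}) \cap W^{1,3}$; in particular $\{u_n\}$ is bounded in $H^2$ and in $C^0(\overline{\Omega})$. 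As before, $\sigma_{u_n}$, $\v_{u_n}$, $p_{u_n}$ denote the solutions of \eqref{stat:nut}, \eqref{stat:Brink} with $\varphi_\delta$ replaced by $u_n$, and $0 \le \sigma_{u_n} \le 1$.

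The nutrient and reaction terms are handled directly. By the continuous dependence estimate \eqref{stat:s:cts}, $\normh{2}{\sigma_{u_n} - \sigma_u} \le C \norml{2}{u_n - u} \to 0$, so $\lap \sigma_{u_n} \to \lap \sigma_u$ in $L^2$ and $\sigma_{u_n} \to \sigma_u$ in $C^0(\overline{\Omega})$. Combining the latter with $u_n \to u$ in $C^0(\overline{\Omega})$ and the continuity of $b_\v, f_\v, b_\varphi, f_\varphi$ on the (compact) range of $(u_n, \sigma_{u_n})$, the reaction term $\gamma(u_n, \sigma_{u_n}) = u_n \Gamma_\v(u_n, \sigma_{u_n}) - \Gamma_\varphi(u_n, \sigma_{u_n})$ converges uniformly, hence in $L^2$, to $\gamma(u, \sigma_u)$.

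The main step is the convection term $\v_{u_n} \cdot \grad \Td(u_n)$. Since $\psi_\delta'(u_n) - \lap u_n$ is bounded in $L^2$ and $\grad \Td(u_n) = \Td'(u_n) \grad u_n$ is bounded in $\LP^6$ (by the $\delta$-independent bound on $\Td'$ and the boundedness of $\{u_n\}$ in $W^{1,6}$), the Brinkman data $\f_n := (\psi_\delta'(u_n) - \lap u_n)\grad \Td(u_n)$ and $g_n := \Gamma_\v(u_n, \sigma_{u_n})$ are bounded in $\LP^{3/2}$ and $W^{1,6}$ respectively; hence Lemma \ref{lem:Brink} (applied with $c = u_n$, $r = 6$, $q = \tfrac54$) and estimate \eqref{Brink:strong:est}, together with the uniform bound on $\norm{u_n}_{W^{1,6}}$, show that $(\v_{u_n}, p_{u_n})$ is bounded in $\W^{2,5/4} \times W^{1,5/4}$ uniformly in $n$. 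Along a further subsequence, $\v_{u_n} \rightharpoonup \v^*$ in $\W^{2,5/4}$ (hence also in $\H^1$) and $p_{u_n} \rightharpoonup p^*$ in $W^{1,5/4}$ (hence in $L^2$). Using that $\eta(u_n) \to \eta(u)$, $\lambda(u_n) \to \lambda(u)$, $\psi_\delta'(u_n) \to \psi_\delta'(u)$ and $\Gamma_\v(u_n, \sigma_{u_n}) \to \Gamma_\v(u, \sigma_u)$ uniformly, that $\grad \Td(u_n) \to \grad \Td(u)$ strongly in $\LP^3$ (from $\Td'(u_n) \to \Td'(u)$ in $C^0(\overline{\Omega})$, the boundedness of $\Td'$, and $\grad u_n \to \grad u$ in $\LP^3$), and that $\lap u_n \rightharpoonup \lap u$ in $L^2$, I would pass to the limit in the variational formulation of the momentum balance in \eqref{stat:Brink} tested against $\bPhi \in \H^1$ (each product there is of weak--strong or weak--uniform type, $\lap u_n$ always being paired with the strongly convergent $\grad \Td(u_n)$), together with the divergence constraint $\div \v_{u_n} = \Gamma_\v(u_n, \sigma_{u_n})$. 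This shows $(\v^*, p^*)$ is a (weak, hence strong) solution of \eqref{stat:Brink} with $c = u$ and data corresponding to $u$, so by the uniqueness in Lemma \ref{lem:stat:Brink}, $\v^* = \v_u$ and $p^* = p_u$; as the limit does not depend on the subsequence, the whole subsequence converges. The compact embedding $\W^{2,5/4} \hookrightarrow\hookrightarrow \LP^3$ then gives $\v_{u_n} \to \v_u$ strongly in $\LP^3$, and combining with $\grad \Td(u_n) \to \grad \Td(u)$ in $\LP^3$ yields $\v_{u_n} \cdot \grad \Td(u_n) \to \v_u \cdot \grad \Td(u)$ strongly in $L^{3/2}$.

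Collecting the three convergences, for any $\zeta \in W$, using $W = H^2_n \hookrightarrow L^\infty \cap H^1$,
\begin{align*}
\left| \inn{\mathcal{A}u_n - \mathcal{A}u}{\zeta}_W \right|
&\le \big( \norml{2}{\gamma(u_n, \sigma_{u_n}) - \gamma(u, \sigma_u)} + \chi \norml{2}{\lap \sigma_{u_n} - \lap \sigma_u} \big) \norml{2}{\zeta} \\
&\quad + \norml{3/2}{\v_{u_n} \cdot \grad \Td(u_n) - \v_u \cdot \grad \Td(u)} \norml{3}{\zeta},
\end{align*}
so taking the supremum over $\norm{\zeta}_W \le 1$ gives $\norm{\mathcal{A}u_n - \mathcal{A}u}_{W^*} \to 0$. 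I expect the crux to be the identification of the weak limit $\v^*$ of $\v_{u_n}$: the Brinkman operator has $u_n$-dependent viscosities and its right-hand side contains $\lap u_n$, which converges only weakly in $L^2$, so continuous dependence on the data (which would require strong $H^2$ convergence of $u_n$, unavailable here) does not suffice — one must pass to the limit in the full variational formulation, exploiting the uniform convergence of $\eta(u_n)$ and $\lambda(u_n)$ to identify the limiting coefficients and invoking the uniqueness from Lemma \ref{lem:stat:Brink} to pin down $\v^*$. The remaining steps are routine weak--strong continuity and compactness arguments.
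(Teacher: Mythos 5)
Your proposal is correct, and its overall skeleton (compactness of $u_n$ in subcritical spaces, uniform bounds and weak limits for $\sigma_{u_n}$ and $(\v_{u_n},p_{u_n})$, identification of the limits through unique solvability of the subsystems, strong $\LP^3$ convergence of the velocities to handle the convection term) coincides with the paper's. The two places where you deviate are worth noting. First, for the a priori bound on the Brinkman solutions the paper only derives an $\H^1\times L^2$ bound by testing the momentum equation with $\v_{u_n}-\u_n$, $\u_n=\DD(\Gamma_\v(u_n,\sigma_{u_n}))$, whereas you invoke the strong solvability Lemma \ref{lem:Brink} with $c=u_n$, $r=6$, $q=\tfrac54$ to get a uniform $\W^{2,5/4}\times W^{1,5/4}$ bound; this is legitimate, but it silently uses that the constant in \eqref{Brink:strong:est} is uniform for $c$ in bounded sets of $W^{1,6}$ (true, since the appendix proof yields constants of the form $C(1+\normL{r}{\nabla c})$ iterated finitely often), and it buys you nothing beyond the $\H^1$ bound, since the compactness you actually use is $\H^1\hookrightarrow\hookrightarrow\LP^3$. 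Second, for the convection term the paper integrates by parts, writing $\intO \zeta\,\v_{u_n}\cdot\nabla\Td(u_n)\dx$ in terms of $\Td(u_n)$, $\Gamma_{\v_n}$, boundary traces of $\v_{u_n}$ and $\nabla\zeta$, and passes to the limit using strong $\LP^3$ and trace convergence of $\v_{u_n}$; you instead keep the derivative on $\Td(u_n)$ and use $\Td'(u_n)\to\Td'(u)$ uniformly together with $\nabla u_n\to\nabla u$ in $\LP^3$ to get $\nabla\Td(u_n)\to\nabla\Td(u)$ strongly in $\LP^3$, which is a more direct product-of-strong-limits argument that avoids the boundary terms. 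Your final display also makes the norm convergence $\norm{\mathcal{A}u_n-\mathcal{A}u}_{W^*}\to 0$ explicit via the embeddings $W\subset L^2\cap L^3$, which the paper leaves implicit; the identification of $\v^*$ with $\v_u$ via the limit variational formulation and the uniqueness assertions of Lemma \ref{lem:stat:Brink} (or, at the weak level, Theorem \ref{thm:A:Brink}) matches the paper's reasoning.
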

\begin{proof}
Let $\{u_n\}_{n \in \N} \subset W$ be a sequence of functions such that $u_n \rightharpoonup u$ in $W$.  Denoting by $\sigma_n$ and $(\v_n,p_n)$ the corresponding unique solutions to the nutrient subsystem \eqref{stat:nut} and Brinkman subsystem \eqref{stat:Brink} where $\varphi_\delta = u_n$. Then, we easily infer that 
\begin{align*}
\norm{\sigma_n}_{H^2} \leq C, \quad \sigma_n \in [0,1] & \text{ a.e.~in } \Omega,
\end{align*}
for a positive constant $C$ independent of $n$. Furthermore, using the assumptions on $\Gamma_{\v}$ by Lemma \ref{LEM_DIVEQU}, there exists a function $\u_n := \DD(\Gamma_\v(u_n, \sigma_n)) \in \H^1$ satisfying
\begin{equation*}
\normH{1}{\mathbf{u}_n}\leq C\norml{2}{\Gamma_{\v}(u_n,\sigma_n)}\leq C,
\end{equation*}
with $C$ independent of $\delta$. Testing $\eqref{stat:Brink}_1$ (for $\varphi_{\delta} = u_n$) with $\v_n-\mathbf{u}_n$, using $\eqref{stat:Brink}_2$-$\eqref{stat:Brink}_3$ and Korn's inequality, we obtain
\begin{align*}
\normH{1}{\v_n}&\leq C\left(1+\normL{\frac{6}{5}}{(\psi_{\delta}'(u_n)-\lap u_n)\grad\Td(u_n)}\right)\\
&\leq C\left(1+ \normL{2}{\grad u_n}\norml{3}{\psi_{\delta}'(u_n)}+\norml{2}{\lap u_n}\normL{3}{\grad u_n}\right).
\end{align*}
Similarly, let $\mathbf{q}_n := \DD(p_n) \in \H^1$ 
which satisfies
\begin{equation*}
\normH{1}{\mathbf{q}_n}\leq C\norml{2}{p_n}.
\end{equation*}
Testing $\eqref{stat:Brink}_1$ (for $\varphi_{\delta} = u_n$) with $\q_n$ and using  $\eqref{stat:Brink}_2$-$\eqref{stat:Brink}_3$, we obtain
\begin{equation*}
\norml{2}{p_n}\leq C\left(1+ \normH{1}{\v_n}+ \normL{2}{\grad u_n}\norml{3}{\psi_{\delta}'(u_n)}+\norml{2}{\lap u_n}\normL{3}{\grad u_n}\right)
\end{equation*}
In particular, this shows that
\begin{equation}\label{stat:B:1}
\normH{1}{\v_n} + \norml{2}{p_n} 
\leq C\left(1+ \normL{2}{\grad u_n}\norml{3}{\psi_{\delta}'(u_n)}+\norml{2}{\lap u_n}\normL{3}{\grad u_n}\right)  \leq C_\delta,
\end{equation}
for a positive constant $C_\delta$ depending on $\delta$ but independent of $n$, thanks to the boundedness of $\{u_n\}_{n \in \N}$ in $W$.  Hence, for fixed $\delta \in (0,1)$, there exist functions $\sigma \in H^2$, $\v \in \H^1$ and $p \in L^2$, such that along a nonrelabelled subsequence, 
\begin{align*}
\sigma_n \rightharpoonup \sigma_u \text{ in } H^2, \quad \v_n \rightharpoonup \v_u \text{ in } \H^1, \quad p_n \rightharpoonup p_u \text{ in } L^2.
\end{align*}
It is clear that $\sigma_u$ is the unique solution to \eqref{stat:nut} corresponding to $u$, while $(\v_u, p_u)$ is the unique weak solution to \eqref{stat:Brink} corresponding to $u$.  Indeed, for the highest order term in \eqref{stat:Brink}, we employ the dominated convergence theorem to deduce that $\Td'(u_n) \to \Td'(u)$ strongly in $L^{\frac{15}{2}}$.  Together with $\lap u_n \rightharpoonup \lap u$ in $L^2$ and $\grad u_n \to \grad u$ in $\LP^5$, we obtain 
\begin{align*}
\lap u_n \grad \Td(u_n) \rightharpoonup \lap u \grad \Td(u) \text{ in } \LP^{6/5},
\end{align*}
while the other terms in the Brinkman system can be handled using similar compactness assertions.

By Rellich's theorem, it is easy to see that
\begin{align*}
\intO  (\gamma(u_n, \sigma_n) + \chi \lap \sigma_n) \zeta \dx \to \intO ( \gamma(u, \sigma_u) + \chi \lap \sigma_{u}) \zeta \dx 
\end{align*}
for all $\zeta \in W$.  Meanwhile, integrating by parts and using $\div (\v_n) = \Gamma_{\v}(u_n, \sigma_n) =: \Gamma_{\v_n}$ yields
\begin{align*}
\intO \zeta ( \nabla \Td(u_n) \cdot \v_{n} ) \dx & = \intS \Td(u_n) \zeta \v_{n} \cdot \n \dH - \intO \Td(u_n) \big ( \Gamma_{\v_n} \zeta + \v_{n} \cdot \nabla \zeta \big ) \dx \\
& \to  \intS \Td(u) \zeta \v_{u} \cdot \n \dH - \intO \Td(u) \big ( \Gamma_{\v_u} \zeta + \v_u \cdot \nabla \zeta \big ) \dx \\
& = \intO \zeta (\nabla \Td(u) \cdot \v_u) \dx
\end{align*}
on account of $\Td(u_n) \to \Td(u)$ in $L^2$ and $L^2(\Sigma)$, $\v_n \to \v_u$ in $\LP^3$ and $\LP^2(\Sigma)$ (see e.g. \cite[Lem. 1.3]{EG_jde}), and $\Gamma_{\v_n} \to \Gamma_{\v_u}$ in $L^2$ .  This shows that $\mathcal{A}$ is strongly continuous.
\end{proof}

\subsection{Existence of approximate solutions}\label{sec:stat:approxsoln}
In this section we fix $\delta \in (0,1)$, and define operators $A_1, A_2: W \to W^*$ by
\begin{align*}
\inn{A_1 u}{\zeta}_{W} & := \int_\Omega \sqrt{\delta} \beta_\delta(u) \zeta + \Delta u \Delta \zeta \dx \\
 \inn{A_2 u}{\zeta}_W & := \int_\Omega \big ( F(u) u + \gamma(u,\sigma_u) + \v_u \cdot \nabla \Td(u) + \chi \lap \sigma_u \big ) \zeta + \nabla \psi_\delta'(u) \cdot \nabla \zeta \dx.
 \end{align*}
Then, $\varphi_\delta \in W$ is a weak solution to \eqref{stat:approx} if $\inn{(A_1 + A_2)\varphi_\delta}{\zeta}_W = 0$ for all $\zeta \in W$.  

Since $\beta_\delta'$ is bounded and $\beta_\delta$ has sublinear growth, we deduce that the operator $A_1$ is monotone and hemicontinuous.  On the other hand, Lemma \ref{lem:calA} together with the continuity and sublinear growth of $\psi_\delta'$, and the continuity and boundedness of $F$ imply that $A_2$ is strongly continuous.  Then, by \cite[Thm.~27.6]{Zeidler} the sum $A = A_1 + A_2$ is a pseudomonotone operator.

We now claim that $A$ is additionally coercive over $W$, i.e., 
\begin{align*}
\lim_{\norm{u}_W \to + \infty} \frac{\inn{Au}{u}}{\norm{u}_{W}} = + \infty.
\end{align*}
Let us first treat the velocity term in $\inn{Au}{u}_W$.  By the definition of $\Td$ and the trace theorem we see that
\begin{equation}\label{stat:coer:1}
\begin{aligned}
& \left | \intO u \big (\v_{u} \cdot \nabla \Td(u) \big ) \dx \right |=  \left |\intO \tfrac{1}{2} \v_{u} \cdot \nabla \abs{\Td(u)}^2 \dx  \right |\\
& \quad = \left | \intS \tfrac{1}{2} \abs{\Td(u)}^2 \v_{u} \cdot \n \dH - \intO \tfrac{1}{2} \Gamma_{\v}(u, \sigma_u) \abs{\Td(u)}^2 \dx \right | \\
& \quad \leq  \tfrac{1}{2} \norm{\v_u}_{\LP^1(\Sigma)} + C \leq C_* \big ( 1 + \norm{\v_u}_{\H^1} \big ).
\end{aligned}
\end{equation}
Let $\u_u := \DD(\Gamma_{\v_u}(u, \sigma_u)) \in \H^1$ which satisfies
\begin{align*}
\norm{\u_u}_{\H^1} \leq C \norm{\Gamma_{\v}(u, \sigma_u)}_{L^2} \leq C,
\end{align*}
where the boundedness of $\Gamma_{\v}(u,\sigma_u)$ can be deduced using similar arguments in Section \ref{sec:unif:do}.  Testing \eqref{stat:Brink} for $(\v_u, p_u)$ with $\v_u - \u_u$ then yields
\begin{align*}
 \eta_0 \norm{\D \v_u}_{\LP^2}^2 + \tfrac{\nu}{2} \norm{\v_u}_{\LP^2}^2 & \leq C \norm{\u_u}_{\H^1}^2 + \norm{(\psi_\delta'(u) - \lap u) \nabla \Td(u) \cdot (\v_u - \u)}_{L^1} \\
 & \leq C + \norm{(\psi_\delta'(u) - \lap u) \nabla \Td(u)}_{\LP^{5/4}}  \norm{\v_u-\u}_{\LP^5}.
\end{align*}
By Korn's inequality, H\"older's inequality and Young's inequality we obtain
\begin{align*}
\norm{\v_u}_{\H^1} & \leq C \Big ( \norm{\u_u}_{\H^1} + \norm{\psi_\delta'(u) \nabla \Td(u)}_{\LP^{5/4}} + \norm{\lap u \nabla \Td(u)}_{\LP^{5/4}} \Big ) \\
& \leq C \Big ( 1 + \norm{\lap u}_{L^2} \norm{\nabla u}_{\LP^{10/3}} \Big ) + \frac{1}{C_*} \Big ( 1+ \norm{\psi_\delta'(u) \nabla T_\delta(u)}_{\LP^{5/4}}^{\frac{5}{4}} \Big ),
\end{align*}
where $C_*$ is the constant in \eqref{stat:coer:1}.  Let us note that as a consequence of elliptic regularity and integration by parts, for $u \in W$ we obtain the following useful inequalities:
\begin{align}\label{ell:est}
\norm{\nabla u}_{\LP^2} \leq \norm{\lap u}_{L^2}^{1/2} \norm{u}_{L^2}^{1/2}, \quad \norm{u}_{H^2} \leq C_\Omega \big ( \norm{\lap u}_{L^2} + \norm{u}_{L^2} \big ).
\end{align}
Employing the Gagliardo--Nirenberg inequality and the elliptic estimate \eqref{ell:est}, we have
\begin{align*}
C\norm{\lap u}_{L^2} \norm{\nabla u}_{\LP^{10/3}} & \leq C \norm{\lap u}_{L^2} \big ( \norm{u}_{H^2}^{4/5} \norm{u}_{L^2}^{1/5} + \norm{u}_{L^2} \big ) \leq \frac{1}{8 C_*} \norm{\lap u}_{L^2}^2 + C \norm{u}_{L^2}^2.
\end{align*}
Meanwhile, we observe that
\begin{align*}
\Td'(s) \beta_\delta(s) =  \begin{cases}
\Td'(s) \beta_{\log}(s) & \text{ for the logarithmic potential}, \\
0 & \text{ for the obstacle potential},
\end{cases}
\end{align*} 
since $\beta_{\Do,\delta}(s) = 0$, $\beta_{\log,\delta}(s) = \beta_{\log}(s)$ for $s \in [-1+\delta, 1-\delta]$.  Hence, as $0 \leq \Td'(s) \leq 1$, we see that
\begin{equation}\label{stat:coer:psi}
\begin{aligned}
& \int_\Omega \abs{\psi_\delta'(u) T_\delta'(u) \nabla u}^{\frac{5}{4}} \dx = \int_{\{ \abs{u} \leq 1-\delta \}} \abs{(\beta_{\log}(u) - \Theta_c u) \nabla u}^{\frac{5}{4}} \dx \\
& \quad \leq \int_{\{ \abs{u} \leq 1-\delta \}} \frac{|\beta_{\log}(u)|^{\frac{5}{4}}}{|\beta_{\log}'(u)|^{\frac{5}{8}}} \abs{\beta_{\log}'(u)}^{\frac{5}{8}} \abs{\nabla u}^{\frac{5}{4}} \dx + C\big ( 1 + \norm{\nabla u}_{\LP^2}^2 \big )\\
& \quad \leq C \int_\Omega \abs{ \beta_{\log,\delta}'(u)}^{\frac{5}{8}} \abs{\nabla u}^{\frac{5}{4}} \dx + C \big ( 1 + \norm{\nabla u}_{\LP^2}^2 \big )  \\
& \quad \leq \frac{1}{2 C_*} \int_\Omega \beta_{\log,\delta}'(u) \abs{\nabla u}^2 \dx + \frac{1}{8 C_*} \norm{\lap u}_{L^2}^2 + C \big ( 1 + \norm{u}_{L^2}^2 \big ),
\end{aligned}
\end{equation}
where for the second last inequality we used L'Hoptial's rule to confirm that $\frac{\beta_{\log}(s)}{(\beta_{\log}'(s))^{1/2}}$ is continuous in $[-1,1]$, hence bounded.  Returning to \eqref{stat:coer:1}, we deduce that
\begin{align}\label{stat:coer:2}
\left | \intO u \big ( \v_u \cdot \nabla \Td(u) \big ) \dx \right | \leq \frac{3}{8} \norm{\lap u}_{L^2}^2 + \frac{1}{2} \intO \beta_\delta'(u) \abs{\nabla u}^2 + C_1 \big ( 1 + \norm{u}_{L^2}^2 \big )
\end{align}
for a positive constant $C_1$ independent of $u$ and $\delta$.  Now, computing $\inn{Au}{u}_W$ gives
\begin{equation}\label{stat:coer:3}
\begin{aligned}
\inn{Au}{u}_W & = \int_\Omega F(u)\abs{u}^2 + \sqrt{\delta} \beta_\delta(u)u + \abs{\lap u}^2 + \beta_\delta'(u) \abs{\nabla u}^2 - \Theta_c \abs{\nabla u}^2 \dx \\
& \quad + \int_\Omega  u \gamma(u, \sigma_u) + u \big ( \v_{u} \cdot \nabla \Td(u) \big ) + \chi u \lap \sigma_{u} \dx \\
& \geq \int_\Omega F(u)\abs{u}^2 + \sqrt{\delta} \beta_{\delta}(u)u + \tfrac{1}{2} \abs{\lap u}^2 + \tfrac{1}{2} \beta_\delta'(u) \abs{\nabla u}^2 \dx \\
& \quad -C (1 +  \norm{u}_{L^2}^2),
\end{aligned}
\end{equation}
for a positive constant $C$ independent of $u$ and $\delta$.  Recalling that $F(u) = C_F$ for $\norm{u}_{L^2}^2 > 3 \abs{\Omega}$, and so, in choosing  $C_F = 2C$, we arrive at
\begin{align*}
\inn{Au}{u}_W \geq \int_\Omega \sqrt{\delta} \beta_\delta(u)u + \tfrac{1}{2}C_F \abs{u}^2 + \tfrac{1}{2} \abs{\lap u}^2 \dx - C \geq c \norm{u}_{H^2}^2 - C
\end{align*}
for $\norm{u}_{L^2}^2 \geq 3 \abs{\Omega}$, which in turn implies coercivity of $A$.

Invoking \cite[Thm.~27.A]{Zeidler}, we deduce for every $\delta\in (0,1)$ the existence of a weak solution $\varphi_{\delta} \in W$ to \eqref{stat:approx}.  Setting
\begin{align}\label{stat:mudelta}
\mu_\delta = - \lap \varphi_\delta + \psi_\delta'(\varphi_\delta) - \chi \sigma_{\varphi_\delta}
\end{align}
we see that the equation $A \varphi_\delta = 0$ in $W^*$ implies
\begin{align}\label{stat:mu:eq}
\int_\Omega \mu_\delta \lap \zeta \dx = \int_\Omega f_\delta \zeta \dx \quad \forall \zeta \in W,
\end{align} 
with right-hand side
\begin{align*}
f_\delta := \sqrt{\delta} \beta_\delta(\varphi_\delta)  + F(\varphi_\delta) \varphi_\delta - \gamma(\varphi_\delta, \sigma_\delta) -  \nabla \Td(\varphi_\delta) \cdot \v_{\delta}.
\end{align*}
Thanks to the regularity $\varphi_\delta \in W$, $\v_{\delta} \in \H^1$, $\sigma_{\delta} \in H^2$, and the sublinear growth of $\beta_\delta$, we easily infer that $f_\delta \in L^2$.  On the other hand, choosing $\zeta = 1$ in \eqref{stat:mu:eq} implies that $f \in L^2_0$.  Then, by arguing as in \cite[Sec.~3.1]{GLS}, we obtain that $\mu_\delta \in W$ for all $\delta \in (0,1)$.

\subsection{Uniform estimates}
From \eqref{stat:approx} and \eqref{stat:mu:eq}, the pair $(\varphi_\delta, \mu_\delta) \in W \times W$ satisfies
\begin{subequations}\label{stat:w}
\begin{alignat}{2}
0 & = \int_\Omega  \big (F(\varphi_\delta) \varphi_\delta + \sqrt{\delta} \beta_\delta(\varphi_\delta) + \gamma(\varphi_\delta, \sigma_\delta) +  \v_\delta \cdot \nabla T_\delta(\varphi_\delta) - \lap \mu_\delta \big ) \zeta \dx, \label{stat:w:1} \\
0 & = \int_\Omega \big ( \beta_\delta(\varphi_\delta) - \Theta_c \varphi_\delta - \mu_\delta - \chi \sigma_\delta - \lap \varphi_\delta \big ) \zeta \dx \label{stat:w:2}
\end{alignat}
\end{subequations}
for all $\zeta \in L^2$.  Returning to the proof of the coercivity of the operator $A$, replacing $u$ with $\varphi_\delta$ in \eqref{stat:coer:3} gives
\begin{align}\label{stat:est:0}
\int_\Omega F(\varphi_\delta) |\varphi_\delta|^2 + \sqrt{\delta} \beta_\delta(\varphi_\delta) \varphi_\delta + \tfrac{1}{2} \beta_\delta'(\varphi_\delta) \abs{\nabla \varphi_\delta}^2 + \tfrac{1}{2} \abs{\lap \varphi_\delta}^2 \leq \tfrac{1}{2} C_F \norm{\varphi_\delta}_{L^2}^2 + C.
\end{align}
If $\norm{\varphi_\delta}_{L^2}^2 \geq 3 \abs{\Omega}$, then as before we have
\begin{align}\label{stat:est:1}
\int_\Omega \sqrt{\delta} \beta_\delta(\varphi_\delta) \varphi_\delta +  \beta_\delta'(\varphi_\delta) \abs{\nabla \varphi_\delta}^2  \dx + \norm{\varphi_\delta}_{H^2}^2 \leq C.
\end{align}
If $\norm{\varphi_\delta}_{L^2}^2 < 3 \abs{\Omega}$, then adding $\norm{\varphi_\delta}_{L^2}^2$ to both sides of \eqref{stat:est:0} and neglecting the nonnegative term $F(\varphi_\delta) |\varphi_\delta|^2$ on the left-hand side yields the uniform estimate \eqref{stat:est:1}.  Hence, $\{\varphi_\delta\}_{\delta \in (0,1)}$ is bounded in $W$, and along a non-relabelled subsequence, it holds that
\begin{align*}
\varphi_\delta \rightharpoonup \varphi, \quad \sigma_\delta \rightharpoonup \sigma_\varphi \text{ in } H^2,
\end{align*}
where $\sigma_\varphi$ is the unique solution to the nutrient subsystem \eqref{stat:nut} with data $\varphi$.  

Convexity of $\hat \beta_\delta$ and $\hat \beta_\delta(0) = 0$ imply the inequality 
\begin{align*}
\hat \beta_\delta (s) \leq \beta_\delta(s) s \quad \text{ for all }s \in \R,
\end{align*}
and together with \eqref{PROP_PSIDELTA_3}, \eqref{PROP_PSIDELTA_LOG_1a} and \eqref{stat:est:1} we deduce that
\begin{align*}
 \delta^{2} \int_\Omega \abs{\beta_{\Do,\delta}(\varphi_\delta)}^2 \dx \leq 2 \delta \int_\Omega \hat \beta_{\Do,\delta}(\varphi_\delta) \dx \leq 2 \delta \int_\Omega  \beta_{\Do,\delta} (\varphi_\delta)\varphi_{\delta} \dx \leq C \sqrt{\delta}, \\
\int_\Omega (\abs{\varphi_\delta}-1)_{+}^2 \dx \leq \int_\Omega \frac{4}{\theta} \delta \hat \beta_{\log,\delta}(\varphi_\delta) \dx \leq C\delta  \int_\Omega \beta_{\log,\delta}(\varphi_\delta) \varphi_\delta \dx \leq C \sqrt{\delta}.
\end{align*}
Using \eqref{del:beta} for the double obstacle potential, we deduce that for both cases the limit $\varphi$ satisfies
\begin{align}\label{stat:phi:1}
\abs{\varphi} \leq 1 \text{ a.e.~in } \Omega.
\end{align}
In particular, we have
\begin{align}\label{Td:conv}
\norm{\varphi}_{L^2}^2 \leq \abs{\Omega}, \quad \Td(\varphi_\delta) \to \varphi \text{ in } L^p \text{ for any } p < \infty.
\end{align}
The latter can be deduced from the a.e.~convergence $\Td(\varphi) \to \varphi$, the Lipschitz continuity of $\Td$ and hence a.e.~convergence of $\Td(\varphi_\delta) - \Td(\varphi) \to 0$, and the dominating convergece theorem.  Using the norm convergence $\norm{\varphi_\delta}_{L^2}^2 \to \norm{\varphi}_{L^2}^2$, we then infer the existence of $\delta_5 > 0$ such that $\norm{\varphi_\delta}_{L^2}^2 \leq 2 \abs{\Omega}$ for $\delta \in (0,\delta_5)$.  Subsequently, $F(\varphi_\delta) = 0$ for $\delta \in (0,\delta_5)$, and in the sequel we will neglect the term $F(\varphi_\delta)\varphi_\delta$.

Choosing $\zeta = - \lap \mu_\delta$ in \eqref{stat:w:2}, and choosing $\zeta = \beta_\delta(\varphi_\delta)$ and also $\zeta = - \lap \varphi_\delta$ in \eqref{stat:w:1} yields after summation and integrating by parts
\begin{equation}\label{stat:mu:1}
\begin{aligned}
& \norm{\nabla \mu_\delta}_{L^2}^2 + \sqrt{\delta} \norm{\beta_\delta(\varphi_\delta)}_{L^2}^2 \\
& \quad \leq \int_\Omega \gamma(\varphi_\delta, \sigma_\delta) (\lap \varphi_\delta - \beta_{\delta}(\varphi_\delta)) - \nabla ( \Theta_c \varphi_\delta + \chi \sigma_\delta) \cdot \nabla \mu_\delta  \dx \\
& \qquad + \int_\Omega  \v_{\delta} \cdot \nabla \Td(\varphi_\delta) ( \lap \varphi_\delta -  \beta_\delta(\varphi_\delta) ) \dx \\
& \quad \leq C + \frac{1}{2} \norm{\nabla \mu_\delta}_{L^2}^2 + \int_\Omega \v_\delta \cdot \nabla T_\delta(\varphi_\delta) ( \lap \varphi_\delta - \beta_\delta(\varphi_\delta)) \dx
\end{aligned}
\end{equation}
on account of the boundedness of $\varphi_\delta$ and $\sigma_\delta$ in $H^2$, and the estimates \eqref{source:doleq0} and \eqref{log:source:est} for the term $\gamma(\varphi_\delta, \sigma_\delta) \beta_\delta(\varphi_\delta)$.  Let $\u_\delta := \DD(\Gamma_{\v_\delta}) \in \H^1$ which satisfies
\begin{align*}
\norm{\u_\delta}_{\H^1} \leq C \norm{\Gamma_{\v_\delta}}_{L^2} \leq C.
\end{align*}
Then, testing $\v_\delta - \u_\delta$ with the Brinkman subsystem \eqref{stat:Brink} yields
\begin{equation}\label{stat:mu:2}
\begin{aligned}
& \int_\Omega \v_\delta \cdot \nabla \Td(\varphi_\delta) (\beta_\delta(\varphi_\delta) - \lap \varphi_\delta) \dx - 2  \norm{\eta^{1/2}(\varphi_\delta)\D \v_\delta}_{\LP^2}^2 - \nu \norm{\v_\delta}_{\LP^2}^2\\
& \quad =  \int_\Omega \Theta_c \varphi_\delta \nabla \Td(\varphi_\delta) \cdot \v_\delta + ( \psi_\delta'(\varphi_\delta) - \lap \varphi_\delta) \nabla \Td(\varphi_\delta) \cdot \u_\delta \dx \\
& \qquad - \int_\Omega 2 \eta(\varphi_\delta) \D \v_\delta \colon \D \u_\delta + \nu \v_\delta \cdot \u_\delta \dx
\end{aligned}
\end{equation}
Recalling \eqref{stat:coer:1} and \eqref{stat:coer:psi}, when substituting \eqref{stat:mu:2} to \eqref{stat:mu:1}, we have
\begin{align*}
& \frac{1}{2} \norm{\nabla \mu_\delta}_{\LP^2}^2 + \sqrt{\delta} \norm{\beta_\delta(\varphi_\delta)}_{L^2}^2 +  \eta_0 \norm{\D \v_\delta}_{\LP^2}^2 + \frac{\nu}{2} \norm{\v_\delta}_{\LP^2}^2 \\
& \quad \leq  \int_\Omega \tfrac{1}{2} \Theta_c \v_\delta \cdot \nabla \abs{\Td(\varphi_\delta)}^2  + (\beta_{\log}(\varphi_\delta) - \Theta_c \varphi_\delta - \lap \varphi_\delta) \nabla \Td(\varphi_\delta) \cdot \u_\delta  \dx \\
& \qquad + C \big ( 1 + \norm{\u_\delta}_{\H^1}^2 \big ) \\
& \quad \leq \eps \normH{1}{\v_\delta}^2 + C \Big ( 1 + \intO \beta_\delta'(\varphi_\delta)|\grad \varphi_\delta|^2 \dx + \normh{2}{\varphi_\delta}^2 + \normH{1}{\u_\delta}^2 \Big ) \\
& \quad \leq \eps \normH{1}{\v_\delta}^2 + C, 
\end{align*}
where $\eps > 0$ is a constant yet to be determined.  Invoking Korn's inequality and choosing $\eps$ sufficiently small, we arrive at
\begin{align}\label{stat:est:2}
\norm{\nabla \mu_\delta}_{\LP^2}^2 + \sqrt{\delta} \norm{\beta_\delta(\varphi_\delta)}_{L^2}^2 + \norm{\v_\delta}_{\H^1}^2 \leq C.
\end{align}
In particular, we deduce that $\v_\delta \rightharpoonup \v$ in $\H^1$ to some limit velocity $\v$.  Moreover,
\begin{align*}
\norm{\sqrt{\delta} \beta_\delta(\varphi_\delta)}_{L^2}^2 \leq C \sqrt{\delta} \to 0 \text{ as } \delta \to 0,
\end{align*}
and so $\sqrt{\delta} \beta_\delta(\varphi_\delta) \to 0$ in $L^2$.  Then, choosing $\zeta = 1$ in \eqref{stat:w:1}, using that $\mu_\delta \in W$ and passing to the limit $\delta \to 0$ yields
\begin{align}\label{stat:mean}
0 = \intO \gamma(\varphi, \sigma) + \v \cdot \nabla \varphi \dx,
\end{align}
where the convergence of the convection term follows from an analogous integration by parts as in the proof of Lemma \ref{lem:calA}, and the convergence \eqref{Td:conv}.  Then, we infer from \eqref{ASS_SOURCE} and \eqref{stat:mean} that the limit $\varphi$ has mean value $\varphi_\Omega \in (-1,1)$.  Indeed, subsituting $\varphi = 1$ or $-1$ in \eqref{stat:mean} leads to a contradiction on account of \eqref{ASS_SOURCE}, and as $\abs{\varphi} \leq 1$ a.e.~in $\Omega$, we have that $\varphi_\Omega \in (-1,1)$.

Arguing as in the time-dependent case we can derive a uniform estimate on the mean value of $\mu_\delta$, and consequently 
\begin{align*}
\norm{\mu_\delta}_{L^2} + \norm{\beta_{\log,\delta}(\varphi_\delta)}_{L^2} + \norm{\beta_{\Do,\delta}(\varphi_\delta)}_{L^2} + \norm{p_\delta}_{L^2} \leq C,
\end{align*}
where the boundedness of $\beta_{\log,\delta}(\varphi_\delta)$ in $L^2$ implies the tighter bounds
\begin{align*}
\abs{\varphi} < 1 \text{ a.e.~in } \Omega
\end{align*}
in the case of the logarithmic potential.
\subsection{Passing to the limit}
In \eqref{stat:w:1} we take $\zeta \in H^1$ and apply integration by parts to get
\begin{align*}
0 = \int_\Omega \big ( \sqrt{\delta} \beta_\delta(\varphi_\delta) + \varphi_\delta \Gamma_{\v_\delta} - \Gamma_{\varphi_\delta} + \v_\delta \cdot \nabla \Td(\varphi_\delta) \big ) \zeta + \nabla \mu_\delta \cdot \nabla \zeta \dx \quad \forall \zeta \in H^1.
\end{align*}
Passing to the limit $\delta \to 0$ then yields \eqref{Stat_Form}.  Meanwhile, \eqref{subdiff} or \eqref{log:weak} can be recovered in the limit $\delta \to 0$ from \eqref{stat:w:2} in a fashion similar to the time-dependence case, as with the recovery of \eqref{MEQ_1} and \eqref{WFORM_1d}.

It remains to show that the weak limit $p$ of $(p_\delta)_{\delta \in (0,\delta_5)}$ together with $\v$ constitutes a solution to the corresponding Brinkman system in the sense of \eqref{WFORM_1a}.  From the definition \eqref{stat:mudelta} of $\mu_\delta$ we observe from \eqref{stat:Brink} that $(\v_\delta, p_\delta)$ satisfies
\begin{equation}\label{stat:Brink:weak}
\begin{aligned}
0 & = \int_\Omega 2 \eta(\varphi_\delta) \D \v_\delta : \D \boldsymbol{\Phi} + \lambda(\varphi_\delta) \div (\v_\delta) \div (\boldsymbol{\Phi}) + \nu \v_\delta \cdot \boldsymbol{\Phi} \dx \\
& \quad - \int_\Omega p_\delta \div (\boldsymbol{\Phi}) + (\mu_\delta + \chi \sigma_\delta) \nabla \Td(\varphi_\delta) \cdot \boldsymbol{\Phi} \dx
\end{aligned}
\end{equation}
for $\boldsymbol{\Phi} \in \H^1$.  For the last term, after integrating by parts and using $\Td(\varphi_\delta) \bPhi \to \varphi \bPhi$ in $\LP^2(\Omega)$ and in $\LP^2(\Sigma)$, $\mu_\delta \to \mu$ in $L^4(\Omega)$ and in $L^2(\Sigma)$, we see that 
\begin{align*}
\intO (\mu_\delta + \chi \sigma_\delta) \nabla \Td(\varphi_\delta) \cdot \bPhi \dx \to \intO (\mu + \chi \sigma) \nabla \varphi \cdot \bPhi \dx
\end{align*}
for all $\bPhi \in \H^1$.  Hence, passing to the limit $\delta \to 0$ in \eqref{stat:Brink:weak} allows us to recover \eqref{WFORM_1a} and thus the quintuple $(\varphi, \mu, \sigma, \v, p)$ is a stationary solution in the sense of Definition \ref{defn:stat}.

Moreover, from the above estimates and weak lower semicontinuity of norms, we know that
\begin{equation}
\label{stat:est:3}\normh{2}{\varphi} + \normh{1}{\mu} + \normh{2}{\sigma}+ \normH{1}{\v}+\norml{2}{p}\leq C.
\end{equation}
Then, from \eqref{Stat_Form} and elliptic regularity, we deduce that
\begin{equation}
\label{stat:est:4}\normh{2}{\mu}\leq C.
\end{equation}
In light of this improved regularity and the Sobolev embedding $H^2 \subset L^{\infty}$, it is easy to see that $(\mu + \chi \sigma) \nabla \varphi \in \LP^q$ where $q < \infty$ for $d = 2$ and $q = 6$ for $d = 3$.  Invoking Lemma \ref{lem:Brink}, we then infer 
\begin{equation}
\label{stat:est:5} \normW{2}{q}{\v} + \normw{1}{q}{p}\leq C,
\end{equation}
which completes the proof.

\section{Proof of Theorem \ref{thm:Darcy} -- Darcy's law}\label{sec:Darcy}
We can adapt most of the arguments and estimates from the proof of Theorem \ref{thm:timedep}.  The main idea is to consider a weak solution quintuple $(\varphi_\delta, \mu_\delta, \sigma_\delta, \v_\delta, p_\delta)$ to the CHB model \eqref{MEQ}-\eqref{BIC} with stress tensor
\begin{align*}
\T_\delta(\v_\delta, p_\delta) := 2\delta \D v_\delta + \delta \div (\v_\delta) \I - p_\delta \I,
\end{align*}
where we have set $\eta(\cdot) = \lambda(\cdot) = \delta$.  Proceeding as in the proof of Theorem \ref{thm:timedep} we obtain the uniform estimates \eqref{APRI_EQ_4}, \eqref{APRI_EQ_5} and 
\begin{equation}\label{APRI_D_EQ_1}
\begin{aligned}
& \norm{\psi_{\delta}(\varphi_\delta)}_{L^{\infty}(0,T;L^1)} + \norm{\varphi_\delta}_{L^{\infty}(0,T;H^1) \cap L^2(0,T;H^2)} + \norm{\nabla \mu_\delta}_{L^2(0,T;\LP^2)} \\
& \quad + \norm{(\beta_\delta'(\varphi_\delta))^{1/2} \nabla \varphi_\delta}_{L^2(0,T;\LP^2)} + \norm{\v_\delta}_{L^2(0,T;\LP^2_{\div})} + \sqrt{\delta} \norm{\D \v_\delta}_{L^2(0,T;\LP^2)} \leq C,
\end{aligned}
\end{equation}
where in the above $\psi_\delta$ and $\beta_\delta$ denote the approximations to either singular potentials and the derivatives of the corresponding convex part.  From the first equality of \eqref{APRI_EQ_15} with $A = 1$, it holds that
\begin{align*}
\norm{\Delta \varphi_\delta}_{L^2}^2 \leq C\Big ( 1 +  \norm{\nabla (\mu_\delta +\chi \sigma_\delta)}_{\LP^2} \Big ) \in L^2(0,T),
\end{align*}
so that by elliptic regularity we can infer
\begin{align}\label{APRI_D_EQ_2}
\norm{\varphi_\delta}_{L^4(0,T;H^2)} \leq C.
\end{align}
Moreover, by the Gagliardo--Nirenburg inequality, we find that
\begin{align*}
\int_0^T \norm{\nabla \varphi_\delta \cdot \v_\delta}_{L^{6/5}}^{\frac{8}{5}} \dt \leq C \norm{\varphi_\delta}_{L^{\infty}(0,T;H^1)}^{\frac{4}{5}} \norm{\varphi_\delta}_{L^{4}(0,T;H^2)}^{\frac{4}{5}} \norm{\v_\delta}_{L^2(0,T;\LP^2)}^{\frac{8}{5}} \leq C,
\end{align*}
so that from \eqref{WFORM_1b} and previous uniform estimates we arrive at
\begin{align}
\label{APRI_D_EQ_3}
\norm{\del_t \varphi_\delta}_{L^{8/5}(0,T;(H^1)^*)} + \norm{\nabla \varphi_\delta \cdot \v_\delta}_{L^{8/5}(0,T;L^{6/5})}  \leq C, \\
\label{APRI_D_EQ_4} \norm{\del_t \varphi_\delta + \div(\varphi_\delta \v_\delta)}_{L^2(0,T;(H^1)^*)} + \norm{(\varphi_\delta)_\Omega}_{W^{1,8/5}(0,T)} \leq C, \\
\label{APRI_D_EQ_5}
| (\varphi_\delta)_\Omega(r) - (\varphi_\delta)_\Omega(s)| \leq C |r-s|^{3/8} \quad \forall r,s \in (0,T).
\end{align}
Let us mention that the sum $\del_t \varphi_\delta + \div(\varphi_\delta \v_\delta)$ has better temporal integrability than either of its constituents, a fact which will play an important role for deriving uniform estimates for $(\mu_\delta)_\Omega$ below.

By reflexive weak compactness arguments and \cite[Sec.~8, Cor.~4]{Simon}, for $\delta \to 0$ along a non-relabelled subsequence, it holds that for any $r \in [1,6)$,
\begin{equation*}
\begin{alignedat}{3}
\varphi_\delta & \to \varphi && \quad \text{ weakly-*} && \quad \text{ in } W^{1,\frac{8}{5}}(0,T;(H^1)^*) \cap L^{\infty}(0,T;H^1) \cap L^4(0,T;H^2), \\
\varphi_\delta & \to \varphi && \quad \text{ strongly } && \quad \text{ in } C^0([0,T];L^r) \cap L^4(0,T;W^{1,r}) \text{ and a.e. in } \Omega_T, \\
\sigma_\delta & \to \sigma && \quad \text{ weakly-*} && \quad \text{ in } L^{\infty}(0,T;H^2), \\
\v_\delta & \to \v && \quad \text{ weakly } && \quad \text{ in } L^2(0,T;\LP^2), \\
\div(\varphi_\delta \v_\delta) & \to \theta && \quad \text{ weakly } && \quad \text{ in } L^{\frac{8}{5}}(0,T;L^{\frac{6}{5}}).
\end{alignedat}
\end{equation*}
The identification $\theta = \div(\varphi \v)$ follows analogously as in Section \ref{sec:mu:mean}, where the assertion \eqref{div:id} now holds for arbitrary $\lambda \in L^4(0,T;L^6)$ by the strong convergence $\nabla \varphi_\delta \to \nabla \varphi$ in $L^4(0,T;\LP^3)$ and the weak convergence $\v_\delta \rightharpoonup \v$ in $L^2(0,T;\LP^2)$.

In order to obtain uniform estimates for the chemical potential $\mu_\delta$ in $L^2(0,T;L^2)$, we again follow the argument in Section \ref{sec:mu:mean}.  Namely, we pass to the limit $\delta \to 0$ in \eqref{WFORM_1b} to obtain \eqref{APRI_EQ_26}, and use the uniform boundedness of $\psi_\delta(\varphi_\delta)$ in $L^1(0,T;L^1)$ from \eqref{APRI_D_EQ_1} to obtain that the limit $\varphi$ satisfies the pointwise bound \eqref{APRI_EQ_27}.  Choosing $\zeta = 1$ in \eqref{APRI_EQ_26} leads to \eqref{APRI_EQ_28} and obtain by contradiction argument that $(\varphi(t))_\Omega \in (-1,1)$ for all $t \in (0,T)$.

Defining $f_\delta \in H^2_n \cap L^2_0$ as the unique solution to \eqref{DO_f_delta} which satisfies \eqref{APRI_EQ_30}.  Then, the right-hand side of  \eqref{APRI_EQ_31} can be estimated as
\begin{equation}\label{APRI_D_EQ_6}
\begin{aligned}
\mathrm{RHS} & \leq C \Big ( \norm{\sigma_\delta(t)}_{L^2}^2 + \norm{\varphi_\delta(t)}_{L^2}^2 \Big ) + \norm{\Gamma_\varphi(\varphi_\delta(t), \sigma_\delta(t))}_{L^2} \norm{f_\delta}_{L^2} \\
& \quad + \norm{\del_t\varphi_\delta(t) + \div(\varphi_\delta(t) \v_\delta(t))}_{(H^1)^*} \norm{f_\delta}_{H^1},
\end{aligned}
\end{equation}
which is bounded in $L^2(0,T)$ by \eqref{APRI_D_EQ_4}.  This modification allows us to infer that $(\mu_\delta)_\Omega$ is uniformly bounded in $L^2(0,T)$, whereas simply using \eqref{APRI_D_EQ_3} would only give the uniform boundedness of $(\mu_\delta)_\Omega$ in $L^{\frac{8}{5}}(0,T)$.  Hence, we recover the uniform $L^2(0,T;L^2)$-estimate \eqref{APRI_EQ_33} for $\mu_\delta$ and also \eqref{APRI_EQ_34} for $\beta_{\Do, \delta}(\varphi_\delta)$ and $\beta_{\log,\delta}(\varphi_\delta)$.  Moreover, setting $\eta(\cdot) = \delta$ and $\eta_1 = \lambda_1 = \delta$, we obtain as in the end of Section \ref{sec:mu:mean} the uniform $L^2(0,T;L^2)$-estimate \eqref{APRI_EQ_36} for $p_\delta$.  

Then, proceeding as in the proof of Theorem \ref{thm:timedep}, we can recover \eqref{WFORM_1b}, \eqref{WFORM_1d} and \eqref{subdiff} (resp.~\eqref{log:weak}) for the double obstacle (resp.~logarithmic) case in the limit $\delta \to 0$, whereas recovery of \eqref{WFORM_D_1a}, \eqref{WFORM_D_1b}, the improved regularity $p \in L^{\frac{8}{5}}(0,T;H^1)$ and the boundary condition \eqref{BC_D} follow from similar arguments outlined in \cite[Sec.~4.2]{EG2}.

\section{Well-posedness of the Brinkman system \eqref{BM_SUBSY}}\label{sec:appendix}
\subsection{Weak solvability}
\begin{thm}\label{thm:A:Brink}
Let $\Omega \subset \R^d$, $d = 2,3$, be a bounded domain with $C^{2,1}$-boundary $\Sigma$.  Let $c \in W^{1,r}$ with $r > d$ be given, fix exponent $q$ such that $q \in (1,2]$ for $d = 2$ and $q \in (\frac{6}{5}, 2]$ for $d = 3$, and suppose $\eta(\cdot)$ and $\lambda(\cdot)$ satisfy \eqref{ass:visc}.  Then, for any $(\f, g, \bh) \in \LP^q \times L^2 \times (\H^{1/2}(\Sigma))^*$, there exists a unique weak solution $(\v, p) \in \H^1 \times L^2$ to
\begin{subequations}\label{A:Brink}
\begin{alignat}{2}
-\div( 2 \eta(c) \D \v + \lambda(c) \div (\v) \I - p \I) + \nu\v = \f& \text{ in }\Omega,\\
\div(\v)=g& \text{ in }\Omega,\\
\T_c(\v, p) \n := (2 \eta(c) \D \v + \lambda(c) \div(\v) \I - p \I) \n=\bh & \text{ on }\Sigma,
\end{alignat} 
\end{subequations}
in the sense
\begin{align*}
\int_\Omega 2 \eta(c) \D \v : \D \bPhi + (\lambda(c) \div (\v) - p) \div \bPhi + \nu \v \cdot \bPhi \dx = \int_\Omega \f \cdot \bPhi \dx + \inn{\bh}{\bPhi}_{\H^{1/2}}
\end{align*}
for all $\bPhi \in \H^1$.  Furthermore, it holds that 
\begin{align}\label{A:Brink:est}
\normH{1}{\v} + \norml{2}{p} \leq C \Big ( \normL{q}{\f} + \norml{2}{g} + \norm{\bh}_{(\H^{1/2})^*} \Big )
\end{align}
for a constant $C > 0$ depending only on $\Omega$, $q$, $\eta_0$, $\eta_1$, $\lambda_0$ and $\nu$.
\end{thm}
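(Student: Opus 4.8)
The plan is to recognise \eqref{A:Brink} as a linear saddle-point problem for $(\v,p) \in \H^1 \times L^2$ and apply the classical theory of mixed variational problems. Introduce the bilinear forms
\begin{align*}
a(\v,\w) := \intO 2\eta(c)\D\v : \D\w + \lambda(c)\div(\v)\div(\w) + \nu\v\cdot\w \dx, \qquad b(\w,q) := -\intO q\div(\w)\dx,
\end{align*}
and the functional $\mathcal{F}(\w) := \intO \f\cdot\w\dx + \inn{\bh}{\w}_{\H^{1/2}}$, so that the weak formulation reads: find $(\v,p) \in \H^1\times L^2$ with $a(\v,\w)+b(\w,p) = \mathcal{F}(\w)$ for all $\w\in\H^1$ and $b(\v,q) = -\intO gq\dx$ for all $q\in L^2$. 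First I would verify the (routine) continuity of $a$, $b$ and $\mathcal{F}$: the bounds $\eta,\lambda\in L^\infty$ give $\abs{a(\v,\w)} \leq C\normH{1}{\v}\normH{1}{\w}$ and $b$ is trivially bounded, while for $\mathcal{F}$ the hypothesis on $q$ is exactly what renders the datum admissible --- the Sobolev embedding $\H^1(\Omega)\hookrightarrow\LP^{q'}$, valid for every finite $q'$ when $d=2$ (i.e.\ any $q>1$) and for $q'=6$ when $d=3$ (i.e.\ $q\geq\tfrac{6}{5}$), yields $\abs*{\intO\f\cdot\w\dx} \leq \normL{q}{\f}\normL{q'}{\w} \leq C\normL{q}{\f}\normH{1}{\w}$, and the trace theorem $\H^1(\Omega)\to\H^{1/2}(\Sigma)$ handles the boundary term, so $\norm{\mathcal{F}}_{(\H^1)^*} \leq C\big(\normL{q}{\f} + \norm{\bh}_{(\H^{1/2})^*}\big)$.

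The crucial structural point is that $a$ is coercive on the \emph{whole} space $\H^1$, not merely on the kernel of $b$. Using $\lambda\geq 0$, $\eta\geq\eta_0$ and $\nu>0$,
\begin{align*}
a(\v,\v) \geq 2\eta_0\normL{2}{\D\v}^2 + \nu\normL{2}{\v}^2 \geq c_0\normH{1}{\v}^2
\end{align*}
by Korn's second inequality, which is valid on Lipschitz domains; it is precisely the Brinkman friction $\nu\v$ that removes the rigid-motion kernel of the pure-traction problem, so no compatibility condition on $(\f,g,\bh)$ is required and $c_0$ --- like the constant in \eqref{A:Brink:est} --- depends only on $\eta_0$, $\nu$ and $\Omega$, not on $c$. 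Next I would establish the inf-sup condition for $b$: given $q\in L^2$, Lemma \ref{LEM_DIVEQU} (equivalently, the operator $\DD$ introduced thereafter) provides $\w := -\DD(q)\in\H^1$ with $\div(\w) = -q$ and $\normH{1}{\w}\leq C\norml{2}{q}$; here it is essential to allow the nonzero normal flux $\tfrac{1}{\abs{\Sigma}}\big(\intO q\dx\big)\n$ on $\Sigma$, since $q$ need not have zero mean --- there is no no-penetration constraint on $\v$ in this model. Then $b(\w,q) = \norml{2}{q}^2$, whence $\sup_{\w\neq\0}b(\w,q)/\normH{1}{\w}\geq\norml{2}{q}/C$, the required inf-sup bound.

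With continuity, coercivity of $a$ on $\H^1$, and the inf-sup condition in hand, the Brezzi theorem for mixed problems delivers a unique solution $(\v,p)\in\H^1\times L^2$ together with the stability estimate $\normH{1}{\v}+\norml{2}{p} \leq C\big(\norm{\mathcal{F}}_{(\H^1)^*}+\norml{2}{g}\big)$, from which \eqref{A:Brink:est} follows by the continuity bound on $\mathcal{F}$. Uniqueness can equally be seen directly: for homogeneous data, testing with $\w=\v$ and using $\div(\v)=0$ gives $a(\v,\v)=0$, hence $\v=\0$ by coercivity, and then $\intO p\div(\w)\dx = 0$ for all $\w\in\H^1$ forces $p=0$ by the surjectivity of $\div:\H^1\to L^2$. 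The main obstacle is not depth but bookkeeping: one must track the Sobolev and trace exponents carefully so that the datum $\f\in\LP^q$ stays admissible for $q$ as small as $\tfrac{6}{5}$ in three dimensions, and one must note that the inf-sup has to be proved on all of $L^2$ --- not only on $L^2_0$ --- which is possible only because the velocity is genuinely non-solenoidal, the needed surjectivity being supplied by the divergence problem of Lemma \ref{LEM_DIVEQU}.
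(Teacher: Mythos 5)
Your argument is correct, and it reaches the conclusion by a genuinely different route than the paper. You cast \eqref{A:Brink} as a single saddle-point problem on $\H^1 \times L^2$ and invoke Babu\v{s}ka--Brezzi: continuity of the forms and of $\mathcal{F}$ (the exponent restrictions on $q$ being exactly what makes $\f \in \LP^q$ an element of $(\H^1)^*$), full coercivity of $a$ on all of $\H^1$ via Korn's inequality plus the Brinkman term $\nu\v$, and the inf-sup condition on the \emph{whole} of $L^2$, which you obtain from the divergence lifting $\DD$ of Lemma \ref{LEM_DIVEQU} -- correctly exploiting that the test space carries no normal-trace constraint, so no mean-zero restriction on the pressure is needed. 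The paper instead decomposes the solution as $\v = \v_0 + \w + \y$, $p = p_0 + \pi + \theta$: an explicit affine pair $(\v_0,p_0)$ absorbing the mean of $g$, a homogeneous-Dirichlet problem $(\mathrm{P}_1)$ solved by the same lifting $\DD$ plus Lax--Milgram on solenoidal fields with pressure recovery via Sohr's lemma, and a solenoidal traction problem $(\mathrm{P}_2)$ where the pressure must additionally be adjusted by a constant $c_0$, identified through the generalised Gauss identity and the normal trace in $(\W^{1/q,q'}(\Sigma))^*$, so that the stress boundary condition is attained in a precise distributional sense. Your route is shorter and avoids the de Rham-type pressure recovery and the normal-trace bookkeeping entirely, and it delivers the stated estimate with the claimed dependence of the constant; what the paper's decomposition buys is that the same three subproblems are reused verbatim in the strong-solvability proof of Lemma \ref{lem:Brink} (via the $W^{2,q}$ results of Fahrwig--Sohr and Shibata--Shimizu), and it makes explicit in which sense the traction condition holds on $\Sigma$ -- information your purely variational argument does not produce, but which the theorem as stated does not require.
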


\begin{proof}
For $(x_1, \dots, x_d)^{\top}\in\Omega$, we define
\begin{align*}
g_\Omega := \frac{1}{\abs{\Omega}} \int_\Omega g \dx, \quad \v_0 := \frac{g_\Omega}{d}(x_1, \dots, x_d)^{\top}, \quad p_0 := g_\Omega \Big ( \frac{2 \eta(c)}{d} + \lambda(c) \Big ).
\end{align*}
Then, a short calculation shows that $\div (\v_0) = g_\Omega$ and $\T_c(\v_0, p_0) = \0$.  Next, we will show there exist unique weak solutions $(\w, \pi)$ and $(\y, \theta)$ to the systems
\begin{align*}
(\mathrm{P}_1) \begin{cases}
- \div (\T_c(\w, \pi))+ \nu \w = \0 & \text{ in } \Omega, \\
\div (\w) = g - g_\Omega & \text{ in } \Omega, \\
\w = \0 & \text{ on } \Sigma, 
\end{cases} \quad 
(\mathrm{P}_2)\begin{cases}
-\div (\T_c(\y, \theta)) + \nu  \y = \f - \nu \v_0 & \text{ in } \Omega, \\
\div (\y)= 0 & \text{ in } \Omega, \\
\T_c(\y, \theta) \n = \bh - \T_c(\w, \pi) \n& \text{ on } \Sigma.
\end{cases}
\end{align*}
Then, one can check that the pair $\v := \w + \y + \v_0$ and $p := \theta + \pi + p_0$ is the unique weak solution to \eqref{A:Brink}.

\paragraph{Solvability of $(\mathrm{P}_1)$.} By Lemma \ref{LEM_DIVEQU}, there exists $\u := \DD(g - g_\Omega) \in \H^1_0$ satisfying
\begin{align*}
\normH{1}{\u} \leq C \norml{2}{g - g_\Omega} \leq C \norml{2}{g}.
\end{align*}
Let $\tilde{\f} := - \nu \u + \div (2 \eta(c) \D \u + \lambda(c) \div(\u)\I) \in (\H^1_0)^*$, and consider the function space $\W_0 := \{ \f \in \H^1_0 : \div (\f) = 0 \text{ a.e.~in } \Omega \}$.  By the Lax--Milgram theorem, there exists a unique solution $\hat{\w} \in \W_0$ to
\begin{align*}
\int_\Omega 2 \eta(c) \D \hat{\w} : \D \bPhi + \nu \hat \w \cdot \bPhi \dx = \inn{\tilde{\f}}{\bPhi}_{\H^1_0} \quad \forall \bPhi \in \W_0
\end{align*}
and satisfies
\begin{align}\label{A:hatw}
\normH{1}{\hat{\w}} \leq C \norm{\tilde{\f}}_{(\H^1_0)^*} \leq C \normH{1}{\u} \leq C \norml{2}{g}.
\end{align}
Then, the function $\w := \hat{\w} + \u \in \H^1_0$ satisfies $\div (\w) = g - g_\Omega$ a.e.~in $\Omega$ and
\begin{align*}
\int_\Omega 2 \eta(c) \D \w : \D \bPhi + \lambda(c) \div (\w) \div (\bPhi) + \nu \w \cdot \bPhi \dx = 0 \quad \forall \bPhi \in \W_0,
\end{align*}
i.e., $\w$ is the first component of the solution to $(\mathrm{P}_1)$.  The recovery of the unique pressure variable $\pi \in L^2_0$ follows from the application of \cite[p.~75, Lem.~2.2.2]{Sohr}, which also yields
\begin{align}\label{A:pi}
\norml{2}{\pi} \leq C \normH{1}{\w} \leq C \norml{2}{g}.
\end{align}
It is also clear that $(\w, \pi)$ constructed above is the unique solution to $(\mathrm{P}_1)$, and for any $\bPhi \in \H^1$ it holds that 
\begin{align*}
\inn{\T_c(\w, \pi)\n}{\bPhi}_{\H^{1/2}(\Sigma)} = \int_\Omega \T_c(\w, \pi) : \nabla \bPhi + \nu \w \cdot \bPhi \dx \leq C \big (\normH{1}{\w} + \norml{2}{\pi} \big ) \normH{1}{\bPhi},
\end{align*}
and so
\begin{align}\label{A:Tcw}
\norm{\T_c(\w, \pi)\n}_{(\H^{1/2}(\Sigma))^*} \leq C \norml{2}{g}.
\end{align}

\paragraph{Solvability of $(\mathrm{P}_2)$.}  We define the function space
\begin{align*}
\W_{\div}^{1,r} := \{ \f \in \W^{1,r} : \div (\f) = 0 \text{ a.e.~in } \Omega \}.\end{align*}
By the Lax--Milgram theorem, there exists a unique solution $\y \in \W_{\div}^{1,2}$ to
\begin{align}\label{A:1}
\F(\bPhi) := \int_\Omega 2 \eta(c) \D \y : \D \bPhi + \nu \y \cdot \bPhi - \f \cdot \bPhi \dx + \inn{\bh - \T_c(\w, \pi) \n}{\bPhi}_{\H^{1/2}(\Sigma)} = 0
\end{align}
holding for all $\bPhi \in \W_{\div}^{1,2}$ and satisfies
\begin{align}\label{A:y}
\normH{1}{\y} \leq C \Big ( \normL{q}{\f} + \norm{\bh}_{(\H^{1/2}(\Sigma))^*} + \norml{2}{g} \Big ).
\end{align}
By \cite[p.~75, Lem.~2.2.2]{Sohr}, there exists a unique pressure $\hat \theta \in L^2_0$ such that $- \nabla \hat \theta = \F$ in the sense of distribution, with 
\begin{align}\label{A:hatthe}
\norml{2}{\hat \theta} \leq C \norm{\F}_{(\H^1)^*} \leq C \Big ( \normL{q}{\f} + \norm{\bh}_{(\H^{1/2}(\Sigma))^*} + \norml{2}{g} \Big ).
\end{align}
It remains to adjust this pressure by a uniquely defined constant $c_0$ so that $\y$ and $\theta := \hat \theta + c_0$ satisfy the boundary condition $\T_c(\y, \theta) \n = \bh - \T_c(\w, \pi)\n$.  Let $q' = \frac{q}{q-1}$ denote the conjugate of $q$, where by the hypothesis it holds that $q' \geq 2$.  From the distributional equation $- \nabla \hat \theta = \F$, we find that the weak divergence of $2 \eta(c) \D \y - \hat \theta \I$ satisfies
\begin{align}\label{A:div}
- \div (2 \eta(c) \D \y - \hat \theta \I) = \f - \nu \y \in \LP^q,
\end{align}
and so $2 \eta(c) \D \y - \hat \theta \I \in \LP^q_{\div}$.  By \cite[Thm.~III.2.2]{Galdi}, it holds that $(2 \eta(c) \D \y - \hat \theta \I) \n \in (\W^{1/q,q'}(\Sigma))^*$ where by the generalised Gauss identity and \eqref{A:div} we have
\begin{align*}
\inn{(2 \eta(c) \D \y - \hat \theta \I) \n}{\bPhi}_{W^{\frac{1}{q},q'}(\Sigma)} = \int_\Omega (2 \eta(c) \D \y - \hat \theta \I) : \nabla \bPhi - \bPhi (\f - \nu \y) \dx \quad \forall \bPhi \in \W^{1,q'}.
\end{align*}
In turn, as $\W^{1,q'} \subset \H^1$, this gives
\begin{align}\label{A:Dy}
\norm{(2 \eta(c) \D \y - \hat \theta \I) \n}_{(\W^{\frac{1}{q},q'}(\Sigma))^*} \leq C \Big ( \normL{q}{\f} + \norm{\bh}_{(\H^{1/2}(\Sigma))^*} + \norml{2}{g} \Big ).
\end{align}
From testing \eqref{A:div} with arbitrary $\bPhi \in \W^{1,q'}_{\div}$, we obtain
\begin{align}\label{A:2}
\inn{(2 \eta(c) \D \y - \hat \theta \I) \n}{\bPhi}_{\W^{\frac{1}{q},q'}(\Sigma)} = \int_\Omega 2 \eta(c) \D \y : \D \bPhi + \nu \y \cdot \bPhi - \f \cdot \bPhi \dx.
\end{align}
Since $q' \geq 2$, we have $(\H^{1/2}(\Sigma))^* \subset (\W^{1/q,q'}(\Sigma))^*$ which implies that $\bh - \T_c(\w, \pi) \n \in  (\W^{1/q,q'}(\Sigma))^*$.  Comparing \eqref{A:1} with $\bPhi \in \W^{1,q'}_{\div}$ and \eqref{A:2} then gives
\begin{align}\label{A:3}
\inn{(2 \eta(c) \D \y - \hat \theta \I) \n - \bh + \T_c(\w, \pi) \n}{\bPhi}_{\W^{\frac{1}{q},q'}(\Sigma)} = 0 \quad \forall \bPhi \in \W^{1,q'}_{\div}.
\end{align}
The regularity of the boundary $\Sigma$ implies the normal vector $\n \in \W^{1/q,q'}(\Sigma)$, and for arbitrary $\bpsi \in \W^{1/q,q'}(\Sigma)$ we define $\widehat{\bpsi} := \bpsi - \frac{1}{\abs{\Sigma}} \int_\Sigma \bpsi \cdot \n \dH$ which satisfies $\int_\Sigma \widehat{\bpsi} \cdot \n \dH = 0$.  By Lemma \ref{LEM_DIVEQU} there exists a solution $\widehat \u \in \W^{1,q'}_{\div}$ to the divergence problem
\begin{align*}
\begin{cases}
\div (\widehat \u) = 0 & \text{ in } \Omega, \\
\widehat \u = \widehat{\bpsi} & \text{ on } \Sigma.
\end{cases}
\end{align*}
Then, substituting $\bPhi = \widehat \u$ in \eqref{A:3} leads to
\begin{align*}
& \inn{(2 \eta(c) \D \y - \hat \theta \I) \n - \bh + \T_c(\w, \pi) \n}{\bpsi}_{W^{\frac{1}{q},q'}(\Sigma)} = \inn{c_0 \n}{\bpsi}_{W^{\frac{1}{q},q'}(\Sigma)} \forall \bpsi \in W^{\frac{1}{q},q'}(\Sigma), \\
& \quad \text{ where } c_0 := \frac{1}{\abs{\Sigma}} \inn{(2 \eta(c) \D \y - \hat \theta \I) \n - \bh + \T_c(\w, \pi) \n}{\n}_{\W^{\frac{1}{q},q'}(\Sigma)}.
\end{align*}
Setting $\theta := \hat \theta + c_0$ in turn shows that $(\y, \theta)$ satisfies the boundary condition $\T_c(\y, \theta) = \bh - \T_c(\w, \pi)$ on $\Sigma$ in the following sense:
\begin{align*}
\inn{(2 \eta(c) \D \y - \theta \I )\n - \bh + \T_c(\w, \pi)\n}{\bpsi}_{W^{\frac{1}{q},q'}(\Sigma)} = 0 \quad \forall \bpsi \in W^{\frac{1}{q},q'}(\Sigma).
\end{align*}
Lastly, combining \eqref{A:hatw}, \eqref{A:pi}, \eqref{A:Tcw}, \eqref{A:y} and \eqref{A:hatthe} and \eqref{A:Dy} it is easy to infer the estimate \eqref{A:Brink:est} for $\v = \w + \y + \v_0$ and $p = \hat \theta + c_0 + p_0 + \pi_0$.
\end{proof}

\subsection{Strong solvability}
Let us first state the following auxiliary result for the Brinkman system with constant viscosities.

\begin{lem}\label{lem:B:const}
Let $\Omega \subset \R^d$, $d = 2,3$, be a bounded domain with $C^{3}$-boundary $\Sigma$ and outer unit normal $\n$.  Let $\eta, \lambda$ and $\nu$ be positive constants, and fix exponent $q$ such that $q > 1$ for $d = 2$ and $q \geq \frac{6}{5}$ for $d = 3$.  Then, for any $\f \in \LP^q$, $g \in W^{1,q}$ and $\bh \in \W^{1-1/q,q}(\Sigma)$, there exists a unique solution $(\v, p) \in \W^{2,q} \times W^{1,q}$ of the system
\begin{subequations}\label{App:B:sys}
\begin{alignat}{2}
- \div ( 2 \eta \D \v + \lambda \div (\v) \I) + \nu \v + \nabla p = \f & \text{ a.e.~in } \Omega, \\
\div(\v) = g & \text{ a.e.~in } \Omega, \\
(2 \eta \D \v + \lambda \div(\v) \I - p \I)\n = \bh  &\text{ a.e.~on } \del \Omega,
\end{alignat}
\end{subequations}
satisfying the following estimate
\begin{align}\label{App:B:constvis}
\normW{2}{q}{\v} + \normw{1}{q}{p} \leq C \Big (\normL{q}{\f} + \normw{1}{q}{g} + \norm{\bh}_{\W^{1-\frac{1}{q}, q}(\Sigma)} \Big ),
\end{align}
with a constant $C$ depending only on $\eta$, $\lambda$, $\nu$, $q$ and $\Omega$.
\end{lem}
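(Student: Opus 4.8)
The plan is to reduce \eqref{App:B:sys} to a divergence-free Stokes problem with a zeroth-order Brinkman term and traction boundary data, for which one can invoke the $L^q$-elliptic theory for the Stokes system with a Neumann-type (traction) boundary condition, and then undo the reduction. For the first step I would write $g=g_\Omega+(g-g_\Omega)$ with $g_\Omega:=\tfrac1{|\Omega|}\int_\Omega g\dx$, take the smooth field $\v_1(x):=\tfrac{g_\Omega}{d}(x_1,\dots,x_d)^\top$ (so $\div(\v_1)=g_\Omega$), and apply the higher-regularity version of the divergence problem (see \cite[Sec.~III.3]{Galdi}) to get $\v_2\in\W^{2,q}\cap\W^{1,q}_0$ with $\div(\v_2)=g-g_\Omega$ and $\normW{2}{q}{\v_2}\le C\normw{1}{q}{g}$. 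With $\v_0:=\v_1+\v_2$ and the ansatz $\v=\v_0+\w$, and using that $\div(2\eta\D\w+\lambda\div(\w)\I)=\eta\lap\w$ when $\div(\w)=0$ and the viscosities are constant, $(\w,p)$ must solve
\begin{equation*}
-\eta\lap\w+\nu\w+\nabla p=\tilde\f\ \text{ in }\Omega,\quad \div(\w)=0\ \text{ in }\Omega,\quad (2\eta\D\w-p\I)\n=\tilde\bh\ \text{ on }\Sigma,
\end{equation*}
with $\tilde\f:=\f-\nu\v_0+\div(2\eta\D\v_0+\lambda g\I)\in\LP^q$ and $\tilde\bh:=\bh-(2\eta\D\v_0+\lambda g\I)\n\in\W^{1-1/q,q}(\Sigma)$, whose norms are controlled by the right-hand side of \eqref{App:B:constvis}; here I use $\v_0\in\W^{2,q}$, $g\in W^{1,q}$, the smoothness of $\n$ and the trace theorem.

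For the core estimate, I would first treat the pure Stokes traction problem ($\nu=0$): its $W^{2,q}$-solvability and a priori estimate are classical Agmon--Douglis--Nirenberg-type results, and I would cite a precise reference for the Stokes system with traction boundary conditions on a $C^{1,1}$ (here $C^3$) domain, recording that the pressure is fully determined (it appears in the boundary condition), that the associated operator $\W^{2,q}\times W^{1,q}\to\LP^q\times W^{1,q}\times\W^{1-1/q,q}(\Sigma)$ is Fredholm of index zero, and that its kernel consists of rigid motions. Adding $+\nu\w$ is a compact perturbation, so the perturbed operator stays Fredholm of index zero; it is moreover injective, since testing the homogeneous reduced system with $\w$ (which lies in $\H^1$ precisely because $q>1$ for $d=2$ and $q\ge\tfrac65$ for $d=3$) and integrating by parts gives $\int_\Omega 2\eta|\D\w|^2+\nu|\w|^2\dx=0$, hence $\w=0$ by $\nu>0$, and then $\nabla p=0$ with $-p\n=0$ on $\Sigma$ force $p=0$. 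Thus the perturbed operator is an isomorphism, yielding a unique $(\w,p)\in\W^{2,q}\times W^{1,q}$ and, by the open mapping theorem, the estimate $\normW{2}{q}{\w}+\normw{1}{q}{p}\le C(\normL{q}{\tilde\f}+\norm{\tilde\bh}_{\W^{1-1/q,q}(\Sigma)})$. Setting $\v:=\w+\v_0$ and keeping the same $p$ gives a solution of \eqref{App:B:sys} satisfying \eqref{App:B:constvis}, and uniqueness for \eqref{App:B:sys} follows from the same energy identity applied to the difference of two solutions.

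I expect the main obstacle to be the clean invocation of the $L^q$-regularity theory for the Stokes system under the traction boundary condition --- being less standard than the Dirichlet one, it requires care to cite a statement valid over the full range $1<q<\infty$ ($d=2$), $\tfrac65\le q<\infty$ ($d=3$), to confirm that no pressure normalisation is needed, and to pin down the solvability/compatibility structure of the unperturbed operator so that the Fredholm perturbation argument for the $\nu\w$-term is justified. If a direct reference were inconvenient, an alternative route is to start from the weak solution of Theorem \ref{thm:A:Brink} (available for $q\le2$, in particular $q=2$), bootstrap its regularity by localisation and difference quotients to reach $W^{2,q}$ for $q>2$, and cover $q<2$ by duality; but citing the elliptic theory directly is cleaner.
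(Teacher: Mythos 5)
Your strategy is essentially the paper's: use the constancy of the viscosities to push the $\lambda\,\div(\v)\I$ contribution into the data, lift the inhomogeneous divergence by the explicit affine field plus a correction, and reduce to a solenoidal Stokes-type problem with traction boundary data whose $L^q$ theory is then invoked; the difference is in how the two black boxes are filled. For the solenoidal traction problem the paper does not pass through the $\nu=0$ problem and a Fredholm perturbation: it cites the resolvent estimate of Shibata and Shimizu \cite[Thm.~1.1]{ShiShi} for the Stokes system with Neumann (traction) boundary condition, which already contains the zeroth-order term $\nu\w$ and holds for all $1<q<\infty$ on a $C^3$ domain. This is precisely the ``precise reference'' you say you would need, and using it removes the rigid-motion kernel/cokernel bookkeeping entirely. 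Your Fredholm route is viable in principle: the perturbation $\w\mapsto\nu\w$ is compact from $\W^{2,q}$ into $\LP^q$, and your injectivity argument via the energy identity is legitimate because $\W^{2,q}\hookrightarrow\H^1$ under the stated restrictions on $q$; but as written the crucial ingredient, the $L^q$ Agmon--Douglis--Nirenberg theory for the traction Stokes problem together with its rigid-motion compatibility structure, is left as an unnamed citation, and that ingredient is the whole content of the lemma at this step. (A further cosmetic difference: the paper reuses the decomposition $(\mathrm{P}_1)$, $(\mathrm{P}_2)$ of Theorem \ref{thm:A:Brink} and upgrades the already-constructed weak solutions to strong ones, while you construct the strong solution directly; this is immaterial.)

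The one step that fails as cited is the divergence lifting. The higher-regularity results for the divergence equation in \cite[Sec.~III.3]{Galdi} (the Bogovskii construction) produce a solution in $\W^{m+1,q}_0$ only for data in $W^{m,q}_0$; they do not give you $\v_2\in\W^{2,q}\cap\W^{1,q}_0$ with $\div(\v_2)=g-g_\Omega$ and $\normW{2}{q}{\v_2}\le C\normw{1}{q}{g}$ for a general $g\in W^{1,q}$, since $g-g_\Omega$ need not vanish on $\Sigma$. The conclusion you want is still attainable, e.g.\ by solving the Dirichlet Stokes problem $-\lap\v_2+\nabla\pi=\0$, $\div(\v_2)=g-g_\Omega$, $\v_2=\0$ on $\Sigma$ and invoking Cattabriga-type $W^{2,q}$ regularity, or by dropping the requirement $\v_2=\0$ on $\Sigma$ (it is not needed, since the boundary condition in \eqref{App:B:sys} is of traction type and any $\W^{2,q}$ lift of the divergence can be absorbed into $\tilde\f$ and $\tilde\bh$) and taking $\v_2=\nabla u$ with $u$ the $W^{3,q}$ solution of a Neumann problem for $\lap u=g-g_\Omega$. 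The paper sidesteps the issue by citing Farwig and Sohr \cite[Thm.~1.2]{FS}, which solves the Dirichlet Stokes resolvent problem with inhomogeneous divergence $g-g_\Omega$ directly in $\W^{2,q}\times W^{1,q}$. With this citation repaired and a concrete reference for the traction problem (Shibata--Shimizu being the natural choice), your argument goes through and yields \eqref{App:B:constvis} as you describe.
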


\begin{proof}
We consider the functions analogous to $\v_0$, $p_0$, $\w$, $\pi$, $\y$ and $\theta$ defined in the proof of Theorem \ref{A:Brink} for the case $\eta(c) = \eta$ and $\lambda(c) = \lambda$, whilst reusing the notation.  It is clear that 
\begin{align*}
\normW{2}{q}{\v_0} + \normw{1}{q}{p_0} \leq C \normw{1}{q}{g}.
\end{align*}
By \cite[Thm.~1.2]{FS}, there exists a unique solution $(\z, \phi) \in (\W^{2,q} \cap \W^{1,q}_0) \times (W^{1,q} \cap L^q_0)$ to the problem
\begin{align*}
\begin{cases}
\nu \z - \eta \lap \z + \nabla \phi = \f + (\eta + \lambda) \nabla g & \text{ in } \Omega, \\
\div(\z) = g - g_\Omega & \text{ in } \Omega, \\
\z = \0 & \text{ on } \Sigma,
\end{cases}
\end{align*}
satisfying
\begin{align*}
\normW{2}{q}{\z} + \normw{1}{q}{\phi} \leq C \Big ( \norml{q}{ \f + (\eta + \lambda) \nabla g} + \normw{1}{q}{g-g_\Omega} \Big ).
\end{align*}
By the assumption on the exponent $q$, it holds that $\W^{2,q} \subset \H^1$ and $W^{1,q} \subset L^2$.  Hence, the difference $\widetilde{\w} := \w - \z$ and $\widetilde{\pi} := \pi - \phi$ constitutes a weak solution in $\H^1_0 \times L^2_0$ to the system
\begin{align*}
\begin{cases}
- \div (2 \eta \D \widetilde{\w} + \lambda \div(\widetilde{\w}) \I - \widetilde{\pi} \I) + \nu \widetilde{\w} = \0 & \text{ in } \Omega, \\
\div (\widetilde{\w}) = 0 & \text{ in } \Omega, \\
\widetilde{\w} = \0 & \text{ on } \Sigma, 
\end{cases}
\end{align*}
Unique solvability implies $(\widetilde{\w}, \widetilde{\pi}) = (\0, 0)$ and hence the unique weak solution $(\w, \pi)$ to $(\mathrm{P}_1)$ with constant viscosities is in fact a strong solution satisfying
\begin{align*}
\normW{2}{q}{\w} + \normw{1}{q}{\pi} \leq C \Big ( \norml{q}{\f} + \normw{1}{q}{g} \Big ).
\end{align*}
Furthermore, by the trace theorem, $\T(\w, \pi) \n:= (2 \eta \D \w + \lambda \div (\w) \I - \pi \I) \n \in \W^{1/q',q}(\Sigma)$ where $q' = \frac{q}{q-1}$ denotes the conjugate of $q$.  Hence, there exists an extension $\mathbf{E} \in \W^{1,q}$ of $\bh - \T(\w, \pi)\n$ such that $\normW{1}{q}{\mathbf{E}} \leq C \norm{\bh - \T(\w, \pi)\n}_{\W^{1/q',q}(\Sigma)}$.  We now consider $(\mathrm{P}_2)$ with boundary data $\mathbf{E} \vert_{\Sigma}$, and invoke \cite[Thm.~1.1]{ShiShi} (which requires a $C^3$-boundary for $\Omega$) to deduce the existence of a unique solution $(\y, \theta) \in \W^{2,q} \times W^{1,q}$ satisfying
\begin{align*}
\normW{2}{q}{\y} + \normw{1}{q}{\theta} & \leq C \Big ( \norml{q}{\f - \nu \v_0} + \normw{1}{q}{\mathbf{E}} \Big ) \\
& \leq C \Big ( \norml{q}{\f} + \normw{1}{q}{g} + \norm{\bh}_{\W^{1 - \frac{1}{q}, q}(\Sigma)} \Big ).
\end{align*}
Hence, $\v := \v_0 + \w + \y \in \W^{2,q}$ and $p := p_0 + \pi + \theta \in W^{1,q}$ constitute the unique strong solution to \eqref{App:B:sys} satisfying \eqref{App:B:constvis}.
\end{proof}

\subsection{Proof of Lemma \ref{lem:Brink}}
Fix $c \in W^{1,r}$ with exponent $r > d$ and data $\f \in \LP^q$, $g \in W^{1,q}$ and $\bh \in \W^{1-1/q,q}(\Sigma)$, where the exponent $q \leq r$ satisfies $q > 1$ for $d = 2$ and $q \geq \frac{6}{5}$ for $d = 3$.  Note that for $d = 3$, we have the Sobolev embedding
\begin{align*}
\W^{\frac{1}{q'},q}(\Sigma) \subset \LP^{\frac{4}{3}}(\Sigma) \subset (\H^{\frac{1}{2}}(\Sigma))^*,
\end{align*}
and for $d = 2$ we have $\W^{1/q',q}(\Sigma) \subset \LP^k(\Sigma)$ and $\H^{1/2}(\Sigma) \subset \LP^{k/(k-1)}(\Sigma)$ for any $k > 1$.  Hence, $\bh \in (\H^{1/2}(\Sigma))^*$, and we invoke Theorem \ref{thm:A:Brink} to deduce the existence of a unique weak solution $(\v, \hat p) \in \H^1 \times L^2$ to 
\begin{align}\label{B:strong:sys1}
\begin{cases}
- \div (2 \eta(c) \D \v - \hat p \I) + \nu \v = \f & \text{ in } \Omega, \\
\div (\v) = g & \text{ in } \Omega, \\
(2 \eta(c) \D \v - \hat p \I) \n = \bh & \text{ on } \Sigma
\end{cases}
\end{align}
in the sense that 
\begin{align*}
\int_\Omega (2 \eta(c) \D \v - \hat p \I): \nabla \bPhi + \nu \v \cdot \bPhi = \int_\Omega \f \cdot \bPhi \dx + \inn{\bh}{\bPhi}_{\H^{1/2}(\Sigma)}
\end{align*}
for all $\bPhi \in \H^1$.  Introducing $\bpsi := \eta(c) \bPhi$, we see that
\begin{equation}\label{B:constvisc} 
\begin{aligned}
& \int_\Omega (2 \D \v - \tfrac{1}{\eta(c)} \hat p \I) : \nabla \bpsi + \nu \v \cdot \bpsi \dx - \inn{\tfrac{1}{\eta(c)} \bh}{\bpsi}_{\H^{1/2}(\Sigma)} \\
& \quad = \int_\Omega \left (\tfrac{1}{\eta(c)} \f  + \nu ( 1 - \tfrac{1}{\eta(c)} ) \v - (2 \eta(c) \D \v - \hat p \I) \nabla (\tfrac{1}{\eta(c)}) \right ) \cdot \bpsi \dx =: \int_\Omega \k \cdot \bpsi \dx
\end{aligned}
\end{equation}
for all $\bpsi \in \H^1$.  Since $q > 1$ and $1 - \frac{1}{q} \notin \mathbb{Z}$, surjectivity of the trace operator yields the existence of an extension $\widehat{\bh} \in \W^{1,q}(\Omega)$ of $\bh \in \W^{1 -1/q,q}(\Sigma)$ such that $\normW{1}{q}{\widehat{\bh}} \leq C \norm{\bh}_{\W^{1-1/q,q}(\Sigma)}$.  Furthermore, as $r > d$, we have the Sobolev embedding $\W^{1,q} \subset \LP^{\frac{qr}{r-q}}$ and $c \in W^{1,r} \subset C^{0,1-\frac{d}{r}}(\overline{\Omega})$.  Hence, it is easy to see that $\frac{1}{\eta(c)} \widehat{\bh} \in \W^{1,q}$ and by the trace theorem $\frac{1}{\eta(c)} \bh \in \W^{1-1/q,q}(\Sigma)$.  Next, we define the exponent $s = \frac{2r}{2+r} < 2$ so that \begin{align*}
\frac{1}{s} = \frac{1}{r} + \frac{1}{2} \quad \implies \quad s > \begin{cases}
1 & \text{ if } d = 2, \\
\frac{6}{5} & \text{ if } d = 3.
\end{cases}
\end{align*}
At this point the analysis is divided into two cases:
\paragraph{Case 1 ($q \leq s = \frac{2r}{2+r}$).} We have $q < 2$ and $\frac{2q}{2-q} \leq r$, and so
\begin{align*}
\normL{q}{\k} & \leq C \Big ( \normL{q}{\f} + \normL{2}{\v} + \Big (\normL{2}{\D\v} + \norml{2}{\hat p} \Big ) \normL{r}{\nabla c} \Big )  \\
& \leq C \Big ( 1 + \normL{r}{\nabla c} \Big ) \Big ( \normL{q}{\f} + \normw{1}{q}{g} + \norm{\bh}_{\W^{1-\frac{1}{q},q}(\Sigma)} \Big ),
\end{align*}
which implies $\k \in \LP^q$.  From \eqref{B:constvisc}, we see that $(\v, \frac{1}{\eta(c)} \hat p) \in \H^1 \times L^2$ is a weak solution to the following system with constant viscosity
\begin{align}\label{B:sys:constvisc}
\begin{cases}
- \div (2 \D \v - \tfrac{1}{\eta(c)} \hat p \I) + \nu \v = \k & \text{ in } \Omega, \\
\div(\v) = g & \text{ in } \Omega, \\
(2 \D \v - \tfrac{1}{\eta(c)} \hat p \I) \n = \tfrac{1}{\eta(c)} \bh & \text{ on } \Sigma,
\end{cases}
\end{align}
where $\k \in \LP^q$, $g \in W^{1,q}$ and $\tfrac{1}{\eta(c)} \bh \in \W^{1-1/q,q}(\Sigma)$.  By Lemma \ref{lem:B:const}, it holds that $(\v, \tfrac{1}{\eta(c)} \hat p) \in \W^{2,q} \times W^{1,q}$ is in fact a strong solution satisfying
\begin{align*}
\normW{2}{q}{\v} + \normw{1}{q}{\tfrac{1}{\eta(c)} \hat p} & \leq C \Big ( \normL{q}{\k} + \normw{1}{q}{g} + \norm{\tfrac{1}{\eta(c)} \bh}_{\W^{1-\frac{1}{q},q}(\Sigma)} \Big ) \\
& \leq C \Big ( 1 + \normL{r}{\nabla c} \Big ) \Big ( \normL{q}{\f} + \normw{1}{q}{g} + \norm{\bh}_{\W^{1-\frac{1}{q},q}(\Sigma)} \Big ).
\end{align*}
By the assumption on $\eta$, we easily infer that $\norml{q}{\hat p} \leq C \norml{q}{\tfrac{1}{\eta(c)} \hat p}$ and 
\begin{align*}
\normw{1}{q}{\hat p} & \leq C \norml{q}{\tfrac{1}{\eta(c)} \hat p} + \normL{q}{\eta(c) \nabla (\tfrac{1}{\eta(c)} \hat p)} + \normL{q}{\hat p \tfrac{\eta'(c)}{\eta(c)} \nabla c} \\
& \leq C \normw{1}{q}{\tfrac{1}{\eta(c)} \hat p} + C \normL{r}{\nabla c} \norml{\frac{qr}{r-q}}{\hat p}  \leq C \Big (  \normw{1}{q}{\tfrac{1}{\eta(c)} \hat p}  + \normL{r}{\nabla c} \norml{2}{\hat p} \Big ) \\
& \leq C  \Big ( 1 + \normL{r}{\nabla c} \Big ) \Big ( \normL{q}{\f} + \normw{1}{q}{g} + \norm{\bh}_{\W^{1-\frac{1}{q},q}(\Sigma)} \Big ).
\end{align*}
where we used $\frac{2q}{2-q} \leq r$ to deduce that $\frac{qr}{r-q} \leq 2$. Defining $p := \hat p + \lambda(c) g$ and making use of the fact $\div (\v) = g$ we see from \eqref{B:strong:sys1} that $(\v, p)$ satisfies the Brinkman system \eqref{BM_SUBSY}.  Furthermore, from the estimate
\begin{align*}
\normw{1}{q}{p} & \leq \normw{1}{q}{\hat p} + \normw{1}{q}{\lambda(c) g} \leq \normw{1}{q}{\hat p} + C \normL{r}{\nabla c} \norml{\frac{qr}{r-q}}{g} + C \normL{q}{\nabla g} \\
& \leq \normw{1}{q}{\hat p} + C \Big ( 1 +  \normL{r}{\nabla c} \Big ) \normw{1}{q}{g} \\
& \leq C  \Big ( 1 + \normL{r}{\nabla c} \Big ) \Big ( \normL{q}{\f} + \normw{1}{q}{g} + \norm{\bh}_{\W^{1-\frac{1}{q},q}(\Sigma)} \Big ),
\end{align*}
where we used $r > d$ to deduce that $1 - \frac{d}{q} \geq - \frac{d(r-q)}{qr}$ and leads to the Sobolev embedding $W^{1,q} \subset L^{\frac{qr}{r-q}}$, we find that $(\v, p)$ also satisfies the estimate \eqref{Brink:strong:est}.

\paragraph{Case 2 ($q > s = \frac{2r}{2+r}$).}  In this case, using $s < 2$ and $r = \frac{2s}{2-s}$, we see that
\begin{align*}
\normL{s}{\k} & \leq C\normL{s}{\f} + C \normL{s}{\v} + C \Big ( \normL{2}{\D \v} + \norml{2}{\hat p} \Big ) \normL{r}{\nabla c} \\
& \leq C \Big ( 1 + \normL{r}{\nabla c} \Big ) \Big ( \normL{q}{\f} + \normw{1}{q}{g} + \norm{\bh}_{\W^{1-\frac{1}{q},q}(\Sigma)} \Big ).
\end{align*}
Using the embedding $g \in W^{1,q} \subset W^{1,s}$ and $\tfrac{1}{\eta(c)} \bh \in \W^{1-1/q,q}(\Sigma) \subset \W^{1-1/s,s}(\Sigma)$, we employ Lemma \ref{lem:B:const} with data $(\k, g, \tfrac{1}{\eta(c)} \bh)$ to deduce that $(\v, \tfrac{1}{\eta(c)} \hat p) \in \W^{2,s} \times W^{1,s}$ is a strong solution to \eqref{B:sys:constvisc} satisfying
\begin{align*}
\normW{2}{s}{\v} + \normw{1}{s}{\tfrac{1}{\eta(c)} \hat p} \leq C \Big ( 1 + \normL{r}{\nabla c} \Big ) \Big ( \normL{q}{\f} + \normw{1}{q}{g} + \norm{\bh}_{\W^{1-\frac{1}{q},q}(\Sigma)} \Big ).
\end{align*}
Then, the Sobolev embedding $W^{1,q} \subset W^{1,s} \subset L^{\frac{sr}{r-s}}$ yields 
\begin{align*}
\normw{1}{s}{\hat p} \leq C \Big ( 1 + \normL{r}{\nabla c} \Big ) \Big ( \normL{q}{\f} + \normw{1}{q}{g} + \norm{\bh}_{\W^{1-\frac{1}{q},q}(\Sigma)} \Big ).
\end{align*}
Let $t$ be the exponent defined by 
\begin{align*}
\frac{1}{t} := \frac{1}{r} + \frac{1}{2} - \frac{1}{d} = \frac{1}{s} - \frac{1}{d},
\end{align*}
then by Sobolev embedding it holds that $(\v, \hat p) \in \W^{1,t} \times L^t$.  Since $r > d$, it follows that $t > 2$.  The idea is to use this improve regularity as a starting point of a bootstrapping argument to show $\k \in \LP^q$.  Once we have $\k \in \LP^q$, following the argument in Case 1 then gives the desired assertion.

Let $s_1< t$ be the exponent defined by
\begin{align*}
\frac{1}{s_1} := \frac{1}{r} + \frac{1}{t} = \frac{1}{s} + \frac{1}{r} - \frac{1}{d}.
\end{align*}
Since $r > d$, we have $s_1 > s$, and from \eqref{B:constvisc}, 
\begin{align*}
\normL{\min(s_1,q)}{\k} \leq C \normL{q}{\f} + C \normL{t}{\v} + C \Big ( \normL{t}{\D \v} + \norml{t}{\hat p} \Big ) \normL{r}{\nabla c}.
\end{align*}
If $q \leq s_1$ then $\k \in \LP^q$ and the proof is complete, otherwise if $s_1 < q$, we deduce instead that $(\v, \hat p) \in \W^{2, s_1} \times W^{1,s_1} \subset  \W^{1,t_1} \times L^{t_1}$ where 
\begin{align*}
\frac{1}{t_1} := \frac{1}{s_1} - \frac{1}{d} = \frac{1}{s} + \frac{1}{r} - \frac{2}{d} = \frac{1}{t} + \frac{1}{r} - \frac{1}{d} \quad \text{ with } t_1 > t.
\end{align*}
Then, $\k \in \LP^{\min(s_2, q)}$ where the exponent $t_1 > s_2 > s_1$ is defined as
\begin{align*}
\frac{1}{s_2} := \frac{1}{r} + \frac{1}{t_1} = \frac{1}{s_1} + \frac{1}{r} - \frac{1}{d}.
\end{align*}
For the bootstrapping argument, given $s_{k-1}$ we set
\begin{align*}
\frac{1}{t_k} := \frac{1}{s_{k-1}} - \frac{1}{d} = \frac{1}{t_{k-1}} + \frac{1}{r} - \frac{1}{d}, \quad \frac{1}{s_k} := \frac{1}{r} + \frac{1}{t_k} = \frac{1}{s_{k-1}} + \frac{1}{r} - \frac{1}{d},
\end{align*}
then it holds that $s_k > s_{k-1}$ and $t_k > t_{k-1}$.  After a finite number of iterations, we get $\min(s_k,q) = q$ and the proof is complete.

\section*{Acknowledgments}
M.~Ebenbeck is supported by the RTG 2339 ``Interfaces, Complex Structures, and Singular Limits" of the German Science Foundation (DFG). The work of K.F.~Lam is partially supported by a grant from the Research Grants Council of the Hong Kong Special Administrative Region, China [Project No.: CUHK 14302319].

\footnotesize
\bibliographystyle{plain}

\end{document}